\theoremstyle{plain}
\newtheorem{thm}{Theorem}[subsection]
\theoremstyle{definition}
\newtheorem{defn}[thm]{Definition}
\newtheorem{proposition}[thm]{Proposition}
\newtheorem{corollary}[thm]{Corollary}
\newtheorem{lemma}[thm]{Lemma}
\newtheorem{rem}[thm]{Remark}
\newtheorem{example}[thm]{Example}
\newtheorem{question}{Question}
\newcommand{\im}{\operatorname{Im}}
\numberwithin{equation}{section}
 \def\l@subsection{\@tocline{2}{0pt}{1.15cm}{1cm}{}}
\def\l@subsubsection{\@tocline{3}{0pt}{8pc}{8pc}{}}
\newcommand{\nocontentsline}[3]{}
\newcommand{\tocless}[2]{\bgroup\let\addcontentsline=\nocontentsline#1{#2}\egroup}
\providecommand\@dotsep{5}
\def\listtodoname{List of Todos}
\def\listoftodos{\@starttoc{tdo}\listtodoname}
\date{}
\title{Factorization structures, cones, and polytopes}
\author{Roland Púček}
\address{Roland Púček, Mathematical Institute, University of Jena, Ernst-Abbe-Platz 1-2, 07743 Jena, Germany}
\email{\href{mailto:roland.pucek@uni-jena.de}{roland.pucek@uni-jena.de}}
\subjclass[2020]{52B11, 52C07, 52B05, 53-02}
\keywords{factorization structure, Segre-Veronese factorization structure, factorization curve, compatible polytope, compatible cone, cyclic polytope, generalised Gale's evenness condition}
\apptocmd{\thebibliography}{\fontsize{11}{15}\selectfont}{}{}
\begin{document}

\begin{abstract}
Factorization structures occur in toric differential and discrete geometry, and can be viewed in multiple ways, e.g., as objects determining substantial classes of explicit toric Sasaki and Kähler geometries, as special coordinates on such, or as an apex generalisation of cyclic polytopes featuring a generalised Gale's evenness condition.
This article presents a comprehensive study of this new concept called factorization structures. It establishes their structure theory and introduces their use in the geometry of cones and polytopes. 
The article explains a construction of polytopes and cones compatible with a given factorization structure, and exemplifies it for the product Segre-Veronese and Veronese factorization structures, where the latter case includes cyclic polytopes.
Further, it derives the generalised Gale's evenness condition for compatible cones, polytopes and their duals, and explicitly describes faces of these.
Factorization structures naturally provide generalised Vandermonde identities, which relate normals of any compatible polytope, and which are used to find examples of Delzant and rational Delzant polytopes compatible with the Veronese factorization structure.
The article offers a myriad of factorization structure examples, which are later characterised to be precisely factorization structures with decomposable curves, and raises the question if these encompass all factorization structures, i.e., the existence of an indecomposable factorization curve.
\end{abstract}

\maketitle

%

\thispagestyle{empty}

This paper offers an extensive exploration of a new concept called factorization structures, provides an introduction to their applications in discrete geometry, and serves as a foundational reference for future research in both discrete and differential geometry based on factorization structures. The central theme revolves around the interaction between the abstract notion of factorization structures, cones, and polytopes. This paper may be viewed as a contribution to the field of discrete geometry through this and original results described below.

Factorization structures have appeared in the literature in \cite{apostolov2015ambitoric,pucek2022extremal,brandenburg2024}.
They were first introduced as 2-dimensional factorization structures in the work of Apostolov, Calderbank, and Gauduchon \cite{apostolov2015ambitoric},
where were used to classify extremal metrics on toric 4-orbifolds with the second Betti number 2; 4-dimensional spaces with isolated non-smooth points whose automorphism group contains 2-torus.
This notable achievement served as an inspiration for further research, culminating in a thesis by the author \cite{pucek2022extremal}, where their latent potential was recognised and systematically studied within the context of toric Kähler and Sasaki geometry.
Most recently, motivated by results obtained in this article, the geometry of polytopes compatible with the Veronese factorization structure was explored in \cite{brandenburg2024}.\\

To motivate factorization structures we consider the familiar example of a cyclic polytope: the convex hull of finitely many points on the momentum curve $t \mapsto (t,t^2,\ldots,t^m)$ (see Figure \ref{fig:polytope}). Cyclic polytopes are famous for their extremal properties, which make
them key examples in various theorems (see \cite{grunbaum1967convex} for a historical account). They are particularly notable in the upper bound theorem \cite{mcmullen1970maximum}, where they exemplify polytopes with maximal number of faces for a given number of vertices, and they stand out as polytopes whose Ehrhart polynomial, counting lattice points in theirs dilates, has positive coefficients \cite{liu2005ehrhart}. They are a family of polytopes with a simple and explicit construction, yet they exhibit a
\begin{wrapfigure}[15]{r}{0.38\textwidth}%
	\centering
    \includegraphics[width=0.35\textwidth]{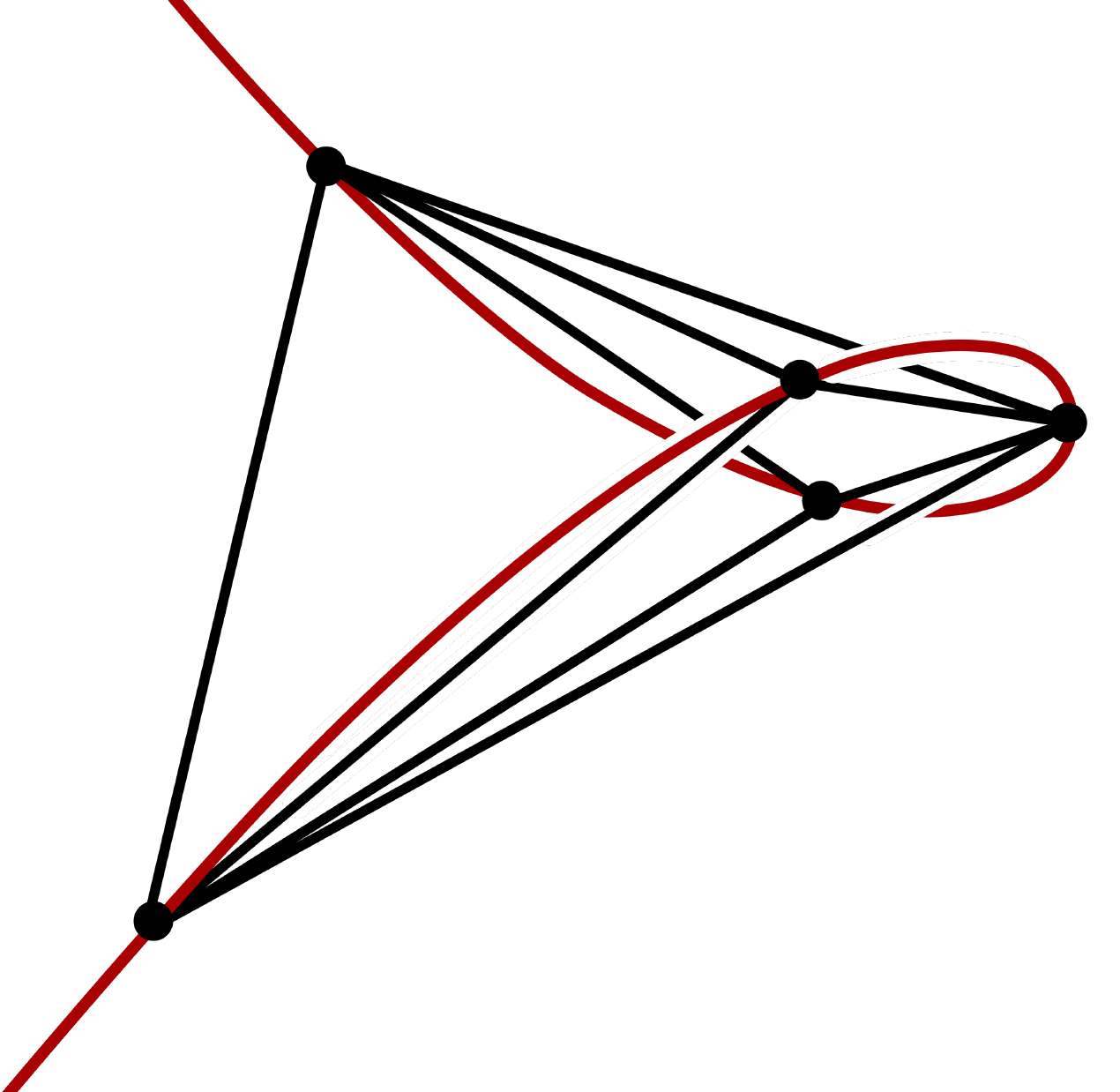}
   \caption{cyclic polytope with 5 vertices on the curve $t\mapsto(t,t^2,t^3)$}
  \label{fig:polytope}
\end{wrapfigure}
surprisingly rich and complex combinatorial structure, a beautiful property that is not easily found in other types of polytopes.
Importantly, their value solely derives from nice properties of the momentum curve, which, in fact, is the rational normal curve, a very distinguished projective curve of algebraic geometry, in a suitable affine chart.
An organic extension of cyclic polytopes would be to consider the convex hull of finitely many points lying on finitely many well-behaving projective curves in an arbitrary affine chart in a space. Such a collection of projective curves is facilitated by a factorization structure, and such a convex hull is called a polytope compatible with the factorization structure.
 
A \textit{factorization structure} of dimension $m$ is defined as a linear inclusion $\varphi:\mathfrak{h} \to V_1\otimes \cdots \otimes V_m$ of an $(m+1)$-dimensional vector space into the tensor product of $m$ 2-dimensional vector spaces satisfying
\begin{align}\label{introfs}
\dim
\left(
\varphi(\mathfrak{h})
\cap
V_1 \otimes \cdots \otimes V_{j-1} \otimes \ell \otimes V_{j+1} \otimes \cdots \otimes V_m
\right)
=
1
\end{align}
for a generic 1-dimensional subspace $\ell \subset V_j$ and any $j$. Consequently, because $\varphi$ is injective, when varying 1-dimensional spaces $\ell\subset V_j$, i.e., points of the projective line $\mathbb{P}(V_j)$, the $\varphi$-preimage of intersections \eqref{introfs} gives 1-dimensional subspaces in $\mathfrak{h}$, i.e., points of $\mathbb{P}(\mathfrak{h})$, and thus for each index $j$ we obtain a projective curve $\mathbb{P}(V_j) \to \mathbb{P}(\mathfrak{h})$, called a factorization curve.
An example of a factorization structure is the Veronese factorization structure, defined as the canonical inclusion $\varphi: S^mW \to  W^{\otimes m}$ of symmetric tensors on the 2-dimensional space $W$. All of its factorization curves coincide and are the rational normal curve,
\begin{align}
\mathbb{P}(W) &\to \mathbb{P}(S^mW)\\
\ell &\mapsto \ell \otimes \cdots \otimes \ell, \nonumber
\end{align}
which, as mentioned previously, is, in a suitable affine chart, the momentum curve. Factorization structures manifest in toric differential geometry and discrete geometry through polytopes and polyhedral cones, and are surprisingly closely related to canonical metrics in toric Kähler geometry and extremal structures in toric Sasaki and CR geometries.\par

In the context of discrete geometry, the exceptional nature of cyclic polytopes arises from the existence of a simple characterisation of hyperplanes adjacent to a cyclic polytope in terms of its vertices, called the Gale's evenness condition \cite{gale1963neighborly}. While significant, cyclic polytopes represent just a small segment of the vast landscape of polytopes and cones that are compatible with factorization structures. Remarkably, the elements within this broader class strike a balance between simplicity and complexity, mirroring the appealing characteristics of cyclic polytopes. Moreover, all these elements are equipped with a generalized Gale's evenness condition, which can be viewed as the very nature of factorization structures. Factorization structures govern the geometry of compatible polytopes and cones, providing an elegant and practical framework ideal for explicit computations. This framework offers a clear perspective on their duals, inherently involves their projective transformations, and provides an explicit description of their faces. Moreover, factorization structures offer a natural generalisation of Vandermonde identities \cite{apostolov2004hamiltonian}, which are used to grasp the interplay of a polytope or a cone with a lattice.
Notably, the efficiency of factorization structures invites attempts at computations that would otherwise be considered intricate or challenging.\par

In differential geometry, one of the main research directions is to seek canonical geometric structures, often arising as extremal points of a (energy) functional, such as the heavily studied extremal Kähler metrics \cite{calabi1982extremal, calabi1985extremal}. Finding non-trivial explicit examples of these metrics is a challenging task, and several were provided ad hoc using toric geometry (\cite{abreu1998kahler,apostolov2021cr,apostolov2003geometry,apostolov2016ambitoric,apostolov2004hamiltonian,jsg/1310388900,simanca1991kahler,simanca1992note,article}). Factorization structures offer a unifying framework that not only encompasses all known explicit extremal toric Kähler metrics but also provides new examples \cite{pucek2022extremal}. In addition, they determine extensive families of explicit toric Sasaki and Kähler geometries amenable to computations, thereby facilitating the search for these explicit canonical geometric structures. The transition between discrete and differential geometry is mediated by the momentum map of a toric geometry whose image is a Delzant polytope.\\

The main contribution of this article is not a collection of isolated results but rather the development of a cohesive and innovative theoretical framework. This framework, centred on factorization structures, their techniques, and their broad applicability, establishes a foundation for exploring new possibilities in both discrete and differential geometry. Its significance lies in its unifying power, providing tools that extend beyond ad hoc principles to systematically address complex problems.\\

In \Cref{s3}, we define cones and polytopes compatible with a given factorization structure using its canonically associated curves, whose properties, studied in \Cref{s2}, are reflected and essential in constructing these. For example, we prove in \Cref{Ver cone is simplicial} that cones compatible with the Veronese factorization structure are cones over simplicial polytopes, and that associated compatible polytopes are simple. The theory of quotient factorization structures from \Cref{quotient subsection} allows us to elegantly and geometrically describe subspaces where faces of compatible cones, polytopes, and of their duals can lie (\Cref{faces}). This, together with the generalised Gale's condition (\Cref{general Gale}), culminates in a non-trivial and powerful result: explicit description of faces. We derive generalised Vandermonde identities \eqref{V-id mod beta} and \eqref{1 identity} and use them in \Cref{s32} to find examples of rational Delzant polytopes.\par
Examples of factorization structures given in this paper are of the Segre-Veronese type (\Cref{SV def}). In particular, such a structure is determined by finitely many constant tensors fulfilling non-trivial equations, and so finding them all explicitly is a challenging task. Instead, we define a product of arbitrary factorization structures (\Cref{prod}), and use it to generate vast classes of explicit Segre-Veronese factorization structures in \Cref{s1}.
We use all the structure theory of factorization structures to characterise decomposable Segre-Veronese factorization structures, a class where defining tensors are decomposable, as iterative products of Veronese factorization structures in \Cref{classification}.
This is the first step towards the classification of factorization structures.
\Cref{main thm} characterises Segre-Veronese factorization structures as exactly those factorization structures whose factorization curves are decomposable. Such a curve can be viewed as an embedded rational normal curve (\Cref{dec char}). The open \Cref{question} asks about the existence of an indecomposable factorization curve.

\tocless{\section*{Acknowledgements}}
The author warmly thanks Marie-Charlotte Brandenburg for invaluable discussions on cones and polytopes, David M.J. Calderbank for dedicating his time and contributing to the initial exploration of this topic, and  Hendrik Süß for sharing expertise in algebraic geometry.

\addtocontents{toc}{\protect{\pdfbookmark[1]{\contentsname}{toc}}}
\renewcommand\contentsname{\vspace{.5cm} Contents}

{\hypersetup{hidelinks}
\tableofcontents}

\section{Factorization structures}\label{s1}

In this article, $V_1,\ldots,V_m$, $m\geq2$, denote real/complex 2-dimensional vector spaces. We define
\begin{align}
	V=\bigotimes_{r=1}^m V_r
	\hspace{1cm}
	\text{and}
	\hspace{1cm}
	\hat{V}_j=\bigotimes_{\substack{r=1\\r\neq j}}^m V_r,
\end{align}
and denote their duals by $V^*$ and $\hat{V}_j^*$, respectively. For a fixed $j\in\{1,\ldots,m\}$ and any 1-dimensional subspace $\ell\subset V_j$ we consider contractions $\rho_{j,v}:V^*\to\hat{V}_j^*$ parametrised by any non-zero $v\in\ell$ and defined on decomposable tensors via
\begin{align}\label{contraction defn}
v_1 \otimes \cdots \otimes v_m
\mapsto
\langle v, v_j \rangle \hspace{.1cm} v_1 \otimes \cdots \otimes v_{j-1} \otimes v_{j+1} \otimes \cdots \otimes v_m,
\end{align}
where $\langle \hspace{.1cm} , \rangle$ is the standard contraction on $V_j\otimes V_j^*$. The kernel of such a contraction is the annihilator of
\begin{align}\label{intro ref}
\Sigma_{j,\ell}:=
V_1\otimes\cdots\otimes V_{j-1}\otimes\ell\otimes V_{j+1}\otimes\cdots\otimes V_m
\end{align}
in $V^*$, which is be denoted by $\Sigma^0_{j,\ell}$, and for a fixed $\ell$ does not depend on $v$.\par
The projective space $\mathbb{P}(W)$ is viewed as the set of 1-dimensional subspaces in the vector space $W$ equipped with the Zariski topology. Often, we identify $\ell\in\mathbb{P}(W)$ with the corresponding 1-dimensional subspace of $W$, and denote the span of a non-zero vector $w\in W$ by $\langle w \rangle$. We say a condition holds for a \textit{generic} point or \textit{generically} if there exists an open non-empty subset $U\subset\mathbb{P}(W)$ such that the condition holds at each point of $U$.\\

Having the notation established we are ready to define the main object of study in this article, a factorization structure.

\begin{defn}\label[defn]{fs def}
Let $m$ be a positive integer. An injective linear map $\varphi:\mathfrak{h}\to V^*$ of a real/complex $(m+1)$-dimensional vector space $\mathfrak{h}$ into real/complex $V^*$ is called a \textit{factorization structure} of dimension $m$ if
\begin{align}\label{wfs def condition}
\dim \left( \varphi(\mathfrak{h}) \cap \Sigma_{j,\ell}^0 \right) = 1
\end{align}
holds for every $j\in\{1,\ldots,m\}$ and generic $\ell\in\mathbb{P}(V_j)$.
An isomorphism of factorization structures is the commutative diagram
\begin{center}
\begin{tikzcd}
	\mathfrak{h}_1 \arrow[d, "\varphi_1"'] \arrow[rr, "\Phi"]                      &  & \mathfrak{h}_2 \arrow[d, "\varphi_2"] \\
	V_1^*\otimes\cdots\otimes V_m^* \arrow[rr, "(\phi_1\otimes\cdots\otimes\phi_m)\sigma"] &  & W_1^*\otimes\cdots\otimes W_m^*      
\end{tikzcd},
\end{center}
where $\Phi$ and $\phi_j:V_{\sigma(j)}^*\to W_j^*$ are linear isomorphisms for all $j\in\{1,\ldots,m\}$, and $\sigma$ is a permutation of $\{1,\ldots,m\}$ viewed as the braiding map
$V_1^*\otimes\cdots\otimes V_m^*\to V_{\sigma(1)}^*\otimes\cdots\otimes V_{\sigma(m)}^*$.
\end{defn}

\begin{rem}\label[rem]{the remark}
Setting $\sigma=\text{id}$ and $\phi_j=\text{id}$, $j=1,\ldots,m$, shows that any two factorization structures with the same images are undistinguishable up to a choice of $\Phi$, which does not play a role in the defining condition \eqref{wfs def condition}. Thus, a factorization structure $\varphi$ can be identified with the subspace $\varphi(\mathfrak{h}) \subset V^*$.
\end{rem}

\begin{rem}
All results of this section hold for real and complex factorization structures. Therefore, no distinction between these is made.
\end{rem}

\subsection{Factorization structures of dimension 2}\label{fs of dim 2}
To begin the study of factorization structures we note that there is only one isomorphism class in dimension 1, and focus on the full understanding of the first non-trivial case, factorization structures of dimension 2. Although factorization structures were previously defined in \cite{apostolov2015ambitoric}, inconsistencies between the definition, notion of isomorphism, and their classification in 2 dimensions, led to \Cref{fs def}. The new definition results in the same 2-dimensional classification as in \cite{apostolov2015ambitoric}: up to isomorphism, it consits of two factorization structure, 2-dimensional Segre and Veronese factorization structures.

To restate this classification in a simplified manner and full detail we note
\begin{lemma}\label[lemma]{2d fs lemma}
An inclusion $\varphi:\mathfrak{h}\to V_1^*\otimes V_2^*$ of a 3-dimensional vector space into the tensor product of two 2-dimensional vector spaces is a 2-dimensional factorization structure.
\end{lemma}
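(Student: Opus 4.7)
The proof is essentially a dimension count, so the plan is to verify the two inequalities in \eqref{wfs def condition} (with equality) using standard linear algebra, and then isolate the single non-trivial point, which is non-emptiness of the ``generic'' locus.

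First I would record the relevant dimensions. Since $\varphi$ is injective and $\dim \mathfrak{h}=3$, the image $\varphi(\mathfrak{h})$ is a 3-dimensional subspace of the 4-dimensional $V^*=V_1^*\otimes V_2^*$. For any $\ell\in\mathbb{P}(V_j)$, the subspace $\Sigma_{j,\ell}\subset V$ is 2-dimensional, so its annihilator $\Sigma_{j,\ell}^0\subset V^*$ is 2-dimensional. The Grassmann dimension formula then yields
\begin{equation*}
\dim\bigl(\varphi(\mathfrak{h})\cap \Sigma_{j,\ell}^0\bigr)\geq 3+2-4=1,
\end{equation*}
with equality if and only if $\Sigma_{j,\ell}^0\not\subset \varphi(\mathfrak{h})$. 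So the task reduces to showing that the ``bad'' locus
\begin{equation*}
\mathcal{B}_j:=\bigl\{\ell\in\mathbb{P}(V_j)\,:\,\Sigma_{j,\ell}^0\subset \varphi(\mathfrak{h})\bigr\}
\end{equation*}
is a proper Zariski-closed subset of $\mathbb{P}(V_j)$.

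Closedness is routine: in local coordinates on $\mathbb{P}(V_j)$ a basis of $\Sigma_{j,\ell}^0$ can be chosen that varies polynomially with $\ell$, and the containment $\Sigma_{j,\ell}^0\subset \varphi(\mathfrak{h})$ is cut out by the vanishing of $4\times 4$ minors of the $4\times 5$ matrix whose columns are this basis together with a fixed basis of $\varphi(\mathfrak{h})$.

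The only substantive point is properness of $\mathcal{B}_j$. Here the key identification is $\Sigma_{j,\ell}^0=\hat{V}_j^*\otimes \ell^0$, where $\ell^0\subset V_j^*$ denotes the 1-dimensional annihilator of $\ell$, and $V^*$ is read as $\hat{V}_j^*\otimes V_j^*$. As $\ell$ ranges over $\mathbb{P}(V_j)$, the line $\ell^0$ ranges over all of $\mathbb{P}(V_j^*)$, so the union $\bigcup_\ell \Sigma_{j,\ell}^0$ contains every rank-one element of $\hat{V}_j^*\otimes V_j^*$, and these span $V^*$. If $\mathcal{B}_j$ were all of $\mathbb{P}(V_j)$, then $\varphi(\mathfrak{h})$ would contain this spanning set and hence equal $V^*$, contradicting $\dim\varphi(\mathfrak{h})=3$. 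So $\mathcal{B}_j\subsetneq\mathbb{P}(V_j)$, its complement is non-empty and open, and on this open set the intersection \eqref{wfs def condition} has dimension exactly $1$. The argument is symmetric in $j\in\{1,2\}$, finishing the proof. The only ``obstacle'' is the span observation, which is a one-line check once $\Sigma_{j,\ell}^0$ is rewritten as $\hat{V}_j^*\otimes \ell^0$; everything else is elementary dimension counting.
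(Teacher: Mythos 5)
Your proof is correct and follows essentially the same dimension count as the paper: $3+2-4=1$ gives the lower bound, and the only issue is showing the locus where $\Sigma_{j,\ell}^0\subset\varphi(\mathfrak{h})$ is small. The paper handles that last step slightly more sharply — two distinct lines $\ell\neq\bar\ell$ give $\Sigma_{j,\ell}^0\cap\Sigma_{j,\bar\ell}^0=0$, so two such $2$-dimensional subspaces cannot both sit inside the $3$-dimensional $\varphi(\mathfrak{h})$, whence the bad locus has at most one point and no Zariski-closedness argument is needed — but your spanning-plus-closedness argument is equally valid.
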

\begin{proof}
Clearly,
\begin{align}\label{m=2 inclusion}
2\geq
\dim \left( \varphi(\mathfrak{h})\cap\ell^0\otimes V_2^* \right)
=
\dim \left( \varphi(\mathfrak{h})\cap\Sigma_{1,\ell}^0 \right)
\geq 1
\end{align}
holds for any $\ell\in\mathbb{P}(V_1)$, and similarly for intersections $\varphi(\mathfrak{h})\cap\Sigma_{2,\ell}^0$ with $\ell\in\mathbb{P}(V_2)$. Note that if \eqref{m=2 inclusion} were 2-dimensional in two distinct points $\ell, \bar{\ell}\in\mathbb{P}(V_1)$, then two 2-dimensional subspaces $\ell^0\otimes V_2^*$ and $\bar{\ell}^0\otimes V_2^*$, whose intersection is trivial, would lie in the 3-dimensional space $\varphi(\mathfrak{h})$. Therefore, the intersection $\varphi(\mathfrak{h}) \cap \Sigma_{1,\ell}^0$ is 2-dimensional at most at one point, hence is generically 1-dimensional, i.e., $\varphi$ is a factorization structure.
\end{proof}
We found that 2-dimensional factorization structures are merely linear inclusions $\varphi:\mathfrak{h}\to V_1^*\otimes V_2^*$, which we now classify up to isomorphism of factorization structures via the annihilator $\varphi(\mathfrak{h})^0\leq V_1\otimes V_2$. \par
If $\varphi(\mathfrak{h})^0$ is decomposable in $V_1\otimes V_2$, i.e., $\varphi(\mathfrak{h})^0=\gamma_1\otimes\gamma_2$ for some 1-dimensional subspaces $\gamma_j\subset V_j$, then the corresponding factorization structure, called \textit{Segre}, is of the form
\begin{align}\label{Segre 2}
\varphi(\mathfrak{h})=V_1^*\otimes\gamma_2^0+\gamma_1^0\otimes V_2^*
\hookrightarrow
V_1^* \otimes V_2^*,
\end{align}
where $\gamma_j^0\subset V_j^*$ is the annihilator of $\gamma_j$ (see \Cref{the remark}).
One easily observes that if $\tilde{\varphi}$ is another inclusion so that $\tilde{\varphi}(\mathfrak{h})^0$ is decomposable, then $\varphi$ and $\tilde{\varphi}$ are isomorphic as factorization structures. \par
Suppose now that $\varphi(\mathfrak{h})^0$ is indecomposable.
Then, any non-zero $\chi\in\varphi(\mathfrak{h})^0$, viewed as a map $\chi: V_1^* \to V_2$, is invertible, since in a basis it is represented by a 2-by-2 matrix with non-zero determinant due to the indecomposability.
The composition of isomorphisms $\text{id}\otimes\chi^{-1}:V_1\otimes V_2\to V_1\otimes V_1^*$, which maps $\chi$ to the identity automorphism of $V_1^*$, and $\omega\otimes\text{id}:V_1\otimes V_1^*\to V_1^*\otimes V_1^*$, where $\omega$ is a fixed area form on $V_1$, maps $\chi$ on an element of $\bigwedge^2V_1^*$.
Therefore, $\langle\chi\rangle$ is mapped onto $\bigwedge^2V_1^*$ under the isomorphism $\omega\otimes\chi^{-1}$ yielding the commutative diagram\\

\begin{equation}\label{diagram}
\begin{tikzcd}
            & 0 \arrow[d]                                                                            & 0 \arrow[d]                            &   \\
0 \arrow[r] & \langle\chi\rangle \arrow[r, "\omega\otimes\chi^{-1}|_{\langle\chi\rangle}"] \arrow[d] & \bigwedge^2V_1^* \arrow[r] \arrow[d]   & 0 \\
0 \arrow[r] & V_1\otimes V_2 \arrow[d, "\varphi^T"'] \arrow[r, "\omega\otimes\chi^{-1}"]             & V_1^*\otimes V_1^* \arrow[d] \arrow[r] & 0 \\
0 \arrow[r] & \mathfrak{h}^* \arrow[d] \arrow[r]                                                     & S^2V_1^* \arrow[d] \arrow[r]           & 0 \\
            & 0                                                                                      & 0                                      &  
\end{tikzcd}.
\end{equation}
Taking in account \Cref{the remark} and dualising \eqref{diagram} shows that the \textit{Veronese} factorization structure
\begin{align}\label{Veronese 2}
S^2V_1\hookrightarrow V_1\otimes V_1
\end{align}
is isomorphic in the sense of factorization structures to $\varphi:\mathfrak{h}\to V_1^*\otimes V_2^*$.\par
We note that Segre and Veronese factorization structures are not isomorphic which can be seen from the decomposability of $\varphi(\mathfrak{h})^0$ in respective cases. This classifies 2-dimensional factorization structures.

\subsection{Segre-Veronese factorization structure}
We describe a large class of factorization structures, called Segre-Veronese, which generalise Segre and Veronese factorization structures discussed in \Cref{fs of dim 2}.\par
For $i\in\{1,\ldots,m\}$ we say that the term $a_i$ in $a_1\otimes\cdots\otimes a_m$ is in the $i$th \textit{slot}. If a partition of $m$ is given, $m=d_1+\cdots+d_k$, $d_j\geq1$, slots group into $k$ groups with the $j$th group containing $d_j$ slots, $j\in\{1,\ldots,k\}$. Slots belonging to the $j$th group are referred to as \textit{grouped $j$-slots}. In fact, positions in such a tensor product can be labelled by pairs $(j,r)$, where $j\in\{1,\ldots,k\}$ and $r\in\{1,\ldots,d_j\}$. For a partition of $m$ as above and a fixed $j\in\{1,\ldots,k\}$ we define the operator
$$ins_j:
(W_j^*)^{\otimes d_j}\otimes\bigotimes_{\substack{i=1\\i\neq j}}^k (W_i^*)^{\otimes d_i}
\to
\bigotimes_{i=1}^k (W_i^*)^{\otimes d_i}$$
which acts on decomposable tensors by
\begin{align*}
\left(w_j^1\otimes\cdots\otimes w_j^{d_j}\right)
\otimes
\bigotimes_{\substack{i=1\\i\neq j}}^k
\left(w_i^1\otimes\cdots\otimes w_i^{d_i}\right)
\mapsto
\bigotimes_{i=1}^k
\left(w_i^1\otimes\cdots\otimes w_i^{d_i}\right),
\end{align*}
where $W_j$, $j=1,\ldots,k$, are vector spaces. Partitions $m=d_1+\cdots+d_p$ and $m=e_1+\cdots+e_q$ are considered to be the same if $\{d_1,\ldots,d_p\}=\{e_1,\ldots,e_q\}$, and distinct if they are not the same.

\begin{defn}\label[defn]{SV def}
For $d_1,\ldots,d_k$ a partition of an integer $m\geq2$ and $W_r$, $r=1,\ldots,k$, 2-dimensional vector spaces, let 
$\Gamma_j
\subset
\bigotimes_{r=1,r\neq j}^k(W_r^*)^{\otimes d_r}$,
$j\in\{1,\ldots,k\}$, be 1-dimensional subspaces such that
\begin{align}\label{SV image}
\sum_{j=1}^{k}
ins_j
\left(
S^{d_j}W_j^*\otimes\Gamma_j
\right)
\end{align}
has dimension $m+1$, where $S^{d_j}W_j^*\subset(W_j^*)^{\otimes d_j}$ is viewed as the subspace of symmetric tensors.
Define vector spaces $V_1,\ldots,V_m$ by
\begin{gather}\label{spaces V_j}
V_{d_1 + \cdots + d_{j-1} + 1} =
V_{d_1 + \cdots + d_{j-1} + 2} =
\cdots = V_{d_1 + \cdots + d_{j-1} + d_j} =
W_j,
\hspace{.5cm}
j=1,\ldots,k,
\end{gather}
where $d_0$ is defined to be zero.
The \textit{standard Segre-Veronese factorization structure} $\varphi: \mathfrak{h} \to V^*$ is defined to be such that $\mathfrak{h}$ is the $(m+1)$-dimensional space \eqref{SV image}, $V^* = \otimes_{j=1}^m V_j^*$ where $V_j$ is defined by \eqref{spaces V_j}, and $\varphi$ is the canonical inclusion of $\mathfrak{h}$ to $V^*$, i.e., it is
\begin{align}\label{SV inclusion}
\sum_{j=1}^{k}
ins_j
\left(
	S^{d_j}W_j^*\otimes \Gamma_j
\right)
\hookrightarrow
\bigotimes_{j=1}^k(W_j^*)^{\otimes d_j}.
\end{align}
Factorization structures corresponding to trivial partitions,
\begin{gather}
\sum_{j=1}^m ins_j \left( W_j^* \otimes \Gamma_j \right)
\hookrightarrow
\bigotimes_{j=1}^m W_j^*
\end{gather}
for $m=1+\cdots+1$, and
\begin{gather}
S^mW^* \hookrightarrow (W^*)^{\otimes m}
\end{gather}
for $m=m$, are respectively called \textit{Segre} and \textit{Veronese}.
An element of the isomorphism class of a standard Segre-Veronese factorization structure is referred to as a Segre-Veronese factorization structure.
\end{defn}
We frequently refer to the 1-dimensional spaces $\Gamma_j$ as \textit{defining tensors} of the standard Segre-Veronese factorization structure, since one does need them to define a given Segre-Veronese factorization structure, and since each $\Gamma_j$ is a linear span of a tensor.
\begin{rem}
Note that Segre and Veronese factorization structures recover \eqref{Segre 2} and \eqref{Veronese 2} when $m=2$.\par
To verify that \eqref{SV inclusion} defines a factorization structure we observe that for $i\in\{1,\ldots,k\}$ and generic $\ell\in\mathbb{P}(W_i)$ we have
\begin{align}\label{standard curves}
\varphi(\mathfrak{h})
\cap
\Sigma_{d_1+\cdots+d_{i-1}+q,\ell}^0
=
ins_i
\left(
	(\ell^0)^{\otimes d_i}
	\otimes
	\Gamma_i
\right),
\end{align}
where $\varphi(\mathfrak{h})$ is \eqref{SV image}, $q\in\{1,\ldots,d_i\}$ and $d_0$ is defined to be $0$. Note that there are at most finitely many $\ell\in\mathbb{P}(V_i)$ for which the dimension of the intersection in \eqref{standard curves} could be strictly larger than one, and, loosely speaking, this occurs when defining tensors $\Gamma_j$, $j\neq i$, decompose at the $i$th slot.
\end{rem}

Determining in general which choices of $\Gamma_j$, $j=1,\ldots,k$, give rise to a factorization structure, i.e., make \eqref{SV image} an $(m+1)$-dimensional vector space, is a challenging task.
Instead, in the following we exemplify particular choices which effortlessly guarantee the correct dimension. \par
\begin{example}\label[example]{k=2 example}
We examine the standard Segre-Veronese factorization structure for $k=2$. To this end, let $m=d_1+d_2$ be a partition, and $\Gamma_1\subset (W_2^*)^{\otimes d_2}$ and $\Gamma_2\subset (W_1^*)^{\otimes d_1}$ be 1-dimensional subspaces.
Observe that the dimension of the image of
\begin{align}\label{k=2}
S^{d_1}W_1^*\otimes\Gamma_1
+
\Gamma_2\otimes S^{d_2}W_2^*
\hookrightarrow
(W_1^*)^{\otimes d_1}\otimes (W_2^*)^{\otimes d_2}
\end{align}
is $m+1$ if and only if $\Gamma_1\subset S^{d_2}W_2^*$ and $\Gamma_2\subset S^{d_1}W_1^*$, which completely characterises choices of $\Gamma_1$ and $\Gamma_2$ leading to a factorization structure.
\end{example}

\begin{example}\label[example]{one intersection example}
For a partition $m=d_1+\cdots+d_k$ and 1-dimensional subspaces $ a^r \subset W_r^*$, $r=1,\ldots,k$, we define \textit{the product Segre-Veronese factorization structure} as the standard Segre-Veronese factorization structure such that
\begin{align}\label{product tensors}
\Gamma_j=
\bigotimes_{\substack{r=1\\r\neq j}}^k (a^r)^{\otimes d_r},
\hspace{.5cm}
j=1,\ldots,k.
\end{align}
These data ensure that any two summands of \eqref{SV image} intersect in
$\bigotimes_{r=1}^k(a^r)^{\otimes d_r}$, which implies that the dimension of \eqref{SV image} is $m+1$. Therefore, the product Segre-Veronese factorization structure is indeed a factorization structure.
The product Segre-Veronese factorization structure with partition $m=1+\cdots+1$ is called the \textit{product Segre factorization structure}.
\end{example}

\begin{example}\label[example]{decomposable example}
Motivated by the example above, a natural step is to find when decomposable $\Gamma_j$ determine a factorization structure, i.e. give rise to the correct dimension of \eqref{SV image}.
\textit{The decomposable Segre-Veronese factorization structure} is defined as the standard Segre-Veronese factorization structure such that $\Gamma_j$ are decomposable, i.e., 
$\Gamma_j
=
\bigotimes_{\substack{r=1\\r\neq j}}^k
\bigotimes_{p=1}^{d_r}
a_j^{r,p}$
for some 1-dimensional subspaces $a_j^{r,p} \subset W_r^*$, $j=1,\ldots,k$. Below, in \Cref{decomposable SV tensors}, we show that if such decomposable $\Gamma_j$, $j=1,\ldots,k$, determine a factorization structure, then it must be that
$a_j^{r,1}=
\cdots=
a_j^{r,d_r} =: a_j^r$,
and hence
\begin{align}\label{dec tensors}
\Gamma_j=
\bigotimes_{\substack{r=1\\r\neq j}}^k (a^r_j)^{\otimes d_r}
\end{align}
necessarily.
However, it is still not plain to see which choices of $a_j^r$ lead to a factorization structure. We characterise these in \Cref{classification}.
\end{example}

\begin{rem}
Observe that the isomorphism class of a fixed standard Segre-Veronese factorization structure may contain multiple standard Segre-Veronese factorization structures; e.g., apply an isomorphism which permutes grouped slots. In the decomposable case \eqref{dec tensors}, any choice of $g_r\in\text{GL}(W_r^*)$, $r=1,\ldots,k$, yields an isomorphic standard Segre-Veronese factorization structure via the operator $(g_1)^{\otimes d_1} \otimes \cdots \otimes (g_k)^{\otimes d_k}$.
\end{rem}

\subsection{Products and decomposable elements}\label{products}
In general, a complete description of 1-dimensional spaces $\Gamma_j$ determining a Segre-Veronese factorization structure is a complex task.
However, we leverage the concept of a product of factorization structures to generate extensive families of hands-on examples.
Specifically, we show that products of two factorization structures are parametrised by the points in the image of the Segre embedding of these two structures.
As it turns out in \Cref{classification}, iterated products of Veronese factorization structures completely characterise decomposable Segre-Veronese factorization structures.
We finish this subsection by presenting an example of a Segre-Veronese factorization structure whose all defining tensors are indecomposable.

\begin{defn}\label[defn]{prod}
Let $\chi:\mathfrak{g}\to W_1^*\otimes\cdots\otimes W_n^*$ and $\varphi:\mathfrak{h}\to V_1^*\otimes\cdots\otimes V_m^*$ be two factorization structures and $T\subset\chi(\mathfrak{g})$ and $S\subset\varphi(\mathfrak{h})$ any two 1-dimensional subspaces. We define the \textit{product} of $\varphi$ and $\chi$ to be the $(n+m)$-dimensional factorization structure given by the canonical inclusion
\begin{align}\label{product}
\varphi(\mathfrak{h})
\otimes
T+
S
\otimes
\chi(\mathfrak{g})
\hookrightarrow
V_1^*\otimes\cdots\otimes V_m^*\otimes W_1^*\otimes\cdots\otimes W_n^*.
\end{align}
\end{defn}

Examples of product include the 2-dimensional Segre factorization structure viewed as a product of two 1-dimensional factorization structures, Segre-Veronese factorization structure with $k=2$ from \Cref{k=2 example} viewed as a product of two Veronese factorization structures, and the product Segre-Veronese \eqref{product tensors} from \Cref{one intersection example}.
In fact, the latter is a product in multiple ways as we can see in the following

\begin{example}\label[example]{full-product ex}
Let $I:=\{1,\ldots,k_0\}\subset\{1,\ldots,k\}$, $1\leq k_0<k$, and let $I^c$ be the complement of $I$. We can rewrite the product Segre-Veronese factorization structure from \Cref{one intersection example} as
\begin{gather}\nonumber
\Bigg(
	\sum_{j\in I}
	ins_j
	\bigg(
		S^{d_j}W_j^*
		\otimes
		\bigotimes_{\substack{r\in I\\r\neq j}} (a^r)^{\otimes d_r}
	\bigg)
\Bigg)
\otimes
\bigotimes_{r\in I^c}
(a^r)^{\otimes d_r}+\\
+
\bigotimes_{r\in I}
(a^r)^{\otimes d_r}
\otimes
\Bigg(
	\sum_{j\in I^c}
	ins_j
	\bigg(
	S^{d_j}W_j^*
	\otimes
	\bigotimes_{\substack{r\in I^c\\r\neq j}} (a^r)^{\otimes d_r}
	\bigg)
\Bigg)\label{product SV is product fs}
\hookrightarrow
\bigotimes_{j\in I} (W_j^*)^{\otimes d_j} \otimes \bigotimes_{j\in I^c} (W_j^*)^{\otimes d_j},
\end{gather}
rendering it as the product of the (product Segre-Veronese) factorization structure
\begin{gather}
\sum_{j\in I}
ins_j
\bigg( S^{d_j}W_j^* \otimes	\bigotimes_{\substack{r\in I\\r\neq j}} (a^r)^{\otimes d_r} \bigg)
\hookrightarrow
\bigotimes_{j\in I} (W_j^*)^{\otimes d_j}
\end{gather}
and the (product Segre-Veronese) factorization structure
\begin{gather}
\sum_{j\in I^c}
ins_j
\bigg( S^{d_j}W_j^* \otimes \bigotimes_{\substack{r\in I^c\\r\neq j}} (a^r)^{\otimes d_r} \bigg)
\hookrightarrow
\bigotimes_{j\in I} (W_j^*)^{\otimes d_j}
\end{gather}
with $T=\bigotimes_{r\in I^c} (a^r)^{\otimes d_r}$ and $S=\bigotimes_{r\in I} (a^r)^{\otimes d_r}$.
Clearly, such a product exists for any non-trivial $I\subset\{1,\ldots,k\}$.
\end{example}

Now we illustrate how products can be used to construct new examples of factorization structures.
We fix the Segre-Veronese factorization structure \eqref{k=2} corresponding to the partition $d_1+d_2$ from \Cref{k=2 example} and the Veronese factorization structure $S^{d_3}W_3^* \hookrightarrow (W_3^*)^{\otimes d_3}$. To form a product of these two factorization structures, we choose 1-dimensional spaces $\Gamma \subset S^{d_3}W_3^*$ and $\Gamma_3$ lying in the image of \eqref{k=2}, and with respect to these choices we obtain the product
\begin{align}\label{k=3}
S^{d_1}W_1^* \otimes \Gamma_1\otimes\Gamma
+
\Gamma_2 \otimes S^{d_2}W_2^* \otimes \Gamma
+
\Gamma_3 \otimes S^{d_3}W_3^*
\hookrightarrow
\bigotimes_{j=1}^3 (W_j^*)^{\otimes d_j},
\end{align}
which is a factorization structure of the dimension $d_1+d_2+d_3$ and belongs again to the class of a Segre-Veronese factorization structures.
We could continue further and make a product of the factorization structure \eqref{k=3} with another Veronese factorization structure $S^{d_4}W_4^* \hookrightarrow (W_4^*)^{\otimes d_4}$, or form a product of two factorization structures of type \eqref{k=2} to obtain a Segre-Veronese factorization structure corresponding to a partition of length 4, i.e. $k=4$. And so on. \\

One could speculate that factorization structures, or at least Segre factorization structures, can be built up via products from atomic pieces. 
Notably, defining tensors of a product of Segre-Veronese factorization structures always decompose across slots belonging to original factors: in \eqref{product}, defining tensors are the ones of $\varphi$ tensored-from-right with $T$ and the ones of $\chi$ tensored-from-left with $S$.
The following example demonstrates that the building blocks of (Segre-Veronese) factorization structures need not be simple.

\begin{example}\label[example]{indec Segre}
We conclude this section with an example of 3-dimensional Segre factorization structure whose all defining tensors are indecomposable. Observe that only in dimension 3, the annihilator $\mathfrak{h}^0\xhookrightarrow{} V$ of a factorization structure $\mathfrak{h}\xhookrightarrow{} V^*$ has the right dimension for being a factorization structure. The Veronese $S^3W^*\xhookrightarrow{} (W^*)^{\otimes 3}$ has the annihilator
\begin{align}
W\otimes\bigwedge^2W +
ins_2\left( W\otimes\bigwedge^2W \right) +
\bigwedge^2W\otimes W
\xhookrightarrow{}
W\otimes W\otimes W,
\end{align}
a factorization structure with indecomposable defining tensors.
\end{example}

For the later use we study particular elements in a product of factorization structures.

\begin{lemma}\label[lemma]{tensors split}
Let
$\varphi(\mathfrak{h})\otimes T+
S\otimes\chi(\mathfrak{g})
\hookrightarrow
V_1^*\otimes\cdots\otimes V_m^*\otimes W_1^*\otimes\cdots\otimes W_n^*$
be a product of factorization structures.
Then
\begin{align}
I
\otimes
K  \subset
\varphi(\mathfrak{h})\otimes T+
S\otimes\chi(\mathfrak{g})
\end{align}
for some 1-dimensional subspaces $ I \subset V_1^*\otimes\cdots\otimes V_m^*$ and $ K  \subset W_1^*\otimes\cdots\otimes W_n^*$ if and only if
\begin{align}\label{split elements in product fs}
\bigg[
	I=
	S
	\text{ and }
	K \subset \chi(\mathfrak{g})
\bigg]
\hspace{.4cm}\text{or}\hspace{.4cm}
\bigg[
	K=
	T
	\text{ and }
	I \subset \varphi(\mathfrak{h})
\bigg]
\end{align}
\end{lemma}
\begin{proof}
The 'if' part is obvious. To prove the 'only if' part of the statement, let $s\in S, t\in T, \iota\in I, \kappa\in K$ be non-zero vectors. Since any element of the product factorization structure can be written as
$
\tau_1\otimes t+
s \otimes\tau_2
$
for some  $\tau_1\in\varphi(\mathfrak{h})$ and $\tau_2\in\chi(\mathfrak{g})$, we need to solve
\begin{align}\label{split elements equation}
\tau_1\otimes t+
s\otimes\tau_2=
\iota\otimes\kappa
\end{align}
for $\tau_1$ and $\tau_2$. We suppose $T \neq K$ and $S \neq I$ as the complementary situation easily gives the claim. We proceed by assuming that $\tau_1$ and $\tau_2$ solve \eqref{split elements equation}, and analyse $\tau_1$ in this equation. Note that if $\tau_1=0$, then the equation reduces to the case we excluded. Now we consider two cases; $\tau_1$ is either in the span of $S$ and $I$, or it is not. In the former case, we can write $\tau_1=as+b\iota$ for some scalars $a,b$, which transforms \eqref{split elements equation} into
\begin{align}
s \otimes (at + \tau_2) =
\iota \otimes (\kappa - bt),
\end{align}
and is true if only if $I = S$ and $K \subset \chi(\mathfrak{g})$, hence contradicting our assumptions. In the latter case $\langle \tau_1 \rangle, S$ and $I$ are linearly independent directions. By completing $\tau_1, s$ and $\iota$ into a basis of $V_1^* \otimes \cdots \otimes V_m^*$ and contracting \eqref{split elements equation} with the dual vector of $\tau_1$, we find that $t=0$, which is a contradiction. Thus, a solution exists if and only if \eqref{split elements in product fs} holds.
\end{proof}

\subsection{Motivation for definition of factorization structures rooted in discrete geometry}\label{discr geom}

The definition of 2-dimensional factorization structures was sought in the search for compactifications of ambitoric geometries \cite{apostolov2016ambitoric,apostolov2015ambitoric} by formalising the workings of hyperplane sections of the 2-dimensional Segre embedding. To provide both a presentation and motivation for this definition from a perspective rooted in discrete geometry, we turn our attention to cyclic polytopes. These were introduced by Gale \cite{gale1963neighborly} and are now a standard part of discrete geometry and combinatorics \cite{grunbaum1967convex,ziegler1993lectures}, however, their presentation could benefit from more context. In the rest of this subsection we merely outline the theory of cyclic polytopes from the viewpoint assumed later in this article.
For a detailed account and further motivation for studying applications of factorization structures in discrete geometry, see \cite{brandenburg2024}. \\

The momentum curve, $t \mapsto (t,t^2,\ldots,t^m)$, is the rational normal curve, i.e., the Veronese embedding
\begin{align}\label{rnc}
\mathbb{P}(W) &\to \mathbb{P}(S^mW^*)\\ \nonumber
\ell &\mapsto \ell^0 \otimes \cdots \otimes \ell^0,
\end{align}
in a suitable affine chart, where $W$ is a 2-dimensional vector space, $S^mW^*$ is the $(m+1)$-dimensional space of symmetric tensors on the dual of $W$, and $\ell^0 \subset W^*$ denotes the annihilator of the 1-dimensional space $\ell$. The annihilators were chosen merely for convenience and consistency with the literature. A choice of finitely many points on the momentum curve determines a cyclic polytope, as well as equally many points on the rational normal curve. As any $m$ of them are linearly independent, say parametrised by $\ell_1,\ldots,\ell_m$, they determine a hyperplane $H$. Its annihilator can be read from the contraction
\begin{align}\label{fast Gale}
\left\langle
\ell_1 \otimes \cdots \otimes \ell_m,
\ell^0 \otimes \cdots \otimes \ell^0
\right\rangle,
\end{align}
which is zero, and thus well-defined, if and only if $\ell\in\{\ell_1,\ldots,\ell_m\}$, where $\ell^0\otimes \cdots \otimes \ell^0 \subset S^mW^*$ is viewed as an element of $(W^*)^{\otimes m}$. Indeed, denoting the canonical inclusion of $S^mW^*$ into $(W^*)^{\otimes m}$ by $\varphi$, here called the Veronese factorization structure, the annihilator of $H$ is the 1-dimensional space $\varphi^t \ell_1 \otimes \cdots \otimes \ell_m$.

Expressing the contraction \eqref{fast Gale} in coordinates, i.e., using the affine chart on $S^mW^*$ in which the rational normal curve is the momentum curve and a suitable chart on its dual, we obtain a polynomial expression
\begin{gather}\label{coordinate gale}
\langle (t_1,-1) \otimes \cdots \otimes (t_m,-1), (1,t) \otimes \cdots \otimes (1,t)\rangle =
\prod_{j=1}^m (t_j-t).
\end{gather}
The proof of Gale’s evenness condition, determining whether $H$ defines a facet of the cyclic polytope, follows directly from \eqref{coordinate gale} and its geometric interpretation as the contraction of a normal vector of $H$ with a point on the momentum curve. 
A detailed proof would require introducing notation, which we omit for brevity.
This geometric approach complements standard treatments (e.g., \cite{gale1963neighborly, grunbaum1967convex, ziegler1993lectures}), which primarily rely on algebraic arguments involving the expanded form of $\prod (t_j - t)$.
By collecting derivatives of \eqref{coordinate gale}, one recovers the Vandermonde identities \cite{apostolov2004hamiltonian}.

A detailed explanation of a generalised Gale condition and the derivation of these identities can be found in \Cref{s3}.
The above analysis not only sheds light on the geometric characteristics of cyclic polytope theory but also encourages further investigation.

Dually, the annihilator of $\ell^0 \otimes \cdots \otimes \ell^0 \subset S^mW^*$ contains the $\varphi^t$-image of $\sum_{j=1}^{m} \Sigma_{j,\ell}$,
\begin{align}\nonumber
\Sigma_{j,\ell} :=
W ^{\otimes (j-1)} \otimes \ell \otimes W^{\otimes (m-j)},
\end{align} 
or, equally, the $\varphi^t$-image of any $\Sigma_{j,\ell}$, $j=1,\ldots,m$, since $\varphi^t$ projects onto symmetric tensors, where $W^{\otimes 0}$ is interpreted as an empty product. The ambiguity in $j$ occurs here because the Veronese factorization structure is the simplest and most symmetric structure, and its interpretation is carried out by factorization curves. To see if $\varphi^t \Sigma_{j,\ell}$ is a hyperplane, one computes the dimension of the intersection of $\Sigma_{j,\ell}$ with $\ker \varphi^t = (\varphi \hspace{.1cm} S^mW^*)^0$, or finds the dimension of its annihilator
\begin{align}
\left( \varphi^t \Sigma_{j,\ell} \right)^0
=
\varphi^{-1}\left( \varphi (S^mW^*) \cap ( \Sigma_{j,\ell} )^0 \right),
\end{align}
both leading to the same condition
\begin{align}\label{intro cond}
\dim
\left(
\varphi (S^mW^*) \cap ( \Sigma_{j,\ell})^0
\right)
= 1,
\end{align}
which is fulfilled for any $\ell\in\mathbb{P}(W)$. In particular, facets of the (simple) polytope dual to the cyclic polytope lie on hyperplanes $\varphi^t \Sigma_{j,\ell}$, where $\ell$ parametrise directions determined by vertices of the cyclic polytope. Furthermore, \Cref{faces} shows that its faces lie on subspaces of the form $\varphi^t \left( \Sigma_{i_1,\ell_1} \cap \cdots \cap \Sigma_{i_r,\ell_r} \right)$ for some $r\leq m$. \\

To summarise, we found that since \eqref{intro cond} holds, $\varphi^t \Sigma_{j,\ell}$ is a hyperplane as well as the annihilator of $\ell^0 \otimes \cdots \otimes \ell^0$.
Because $\varphi^t (\Sigma_{1,\ell_1}) \cap \cdots \cap \varphi^t (\Sigma_{m,\ell_m}) = \varphi^t (\ell_1 \otimes \cdots \otimes \ell_m)$, which can be verified in this case directly, the annihilator of the hyperplane given by $\ell_j^0 \otimes \cdots \otimes \ell_j^0$, $j=1,\ldots,m$, is $\varphi^t \ell_1 \otimes \cdots \otimes \ell_m$.
When particular affine charts are used, the Gale evenness condition follows and we obtain the framework of cyclic polytopes.

From this viewpoint, a generalization of the theory becomes apparent: a general inclusion satisfying an analogue of \eqref{intro cond} and general affine charts can be used to extend the cyclic polytope framework.
Remarkably, even alternative affine charts within the Veronese factorization structure yield many unexpected polytope classes beyond cyclic polytopes, as explored in \cite{brandenburg2024}.
This insight provides a contextual explanation of cyclic polytopes and their theory. \\

Wishing to preserve the clarity of computations above and maximise their use, we arrived at the definition of a factorization structure: a linear inclusion $\varphi$ of an $(m+1)$-dimensional vector space into the tensor product of $m$ 2-dimensional vector spaces such that the obvious analogue of \eqref{intro cond} holds for any $j$ and generic $\ell$. The genericity-requirement means that $\varphi^t \Sigma_{j,\ell}$ is a hyperplane only for generic $\ell$. This definition and the presentation provided apply to vector spaces over complex numbers as well. In complex case, the momentum curve can be realified, resulting in the Carathéodory curve \cite{caratheodory1911variabilitatsbereich}, whose polytopes are known to be cyclic \cite{grunbaum1967convex,ziegler1993lectures}.

Note that when $\ell_1, \ldots, \ell_m\in\mathbb{P}(W)$ vary while remaining pairwise distinct, the corresponding $m$ 1-parametric families of hyperplanes $\varphi^t\Sigma_{j,\ell_j}$, $j=1\ldots,m$, provide coordinates on a Zariski-open subset of $\mathbb{P}(S^mW)$, since their respective annihilators $(\ell_j^0)^{\otimes m}$, $j=1,\ldots,m$, remain linearly independent.
These coordinates are called \textit{separable}.
Thus, a point in this open subset of $\mathbb{P}(S^mW)$ is determined by a point in the $m$-product $\mathbb{P}(W) \times \cdots \times \mathbb{P}(W)$, showing that a projective space factors into a product of projective lines, at least locally. Hence the name factorization structures.

\subsection{Factorization structures in differential geometry}\label{dg motivation}
Most of this section is unpublished, with references provided where applicable. It gives a brief look at factorization structures in differential geometry, thereby offering another reason to study them, and outlines applications of results from this article in studying Kähler metrics.

As mentioned above, 2-dimensional factorization structures were originally explored in ambitoric compactifications \cite{apostolov2016ambitoric,apostolov2015ambitoric}, where were used to achieve a classification of extremal Kähler structures on compact toric 4-orbifolds with the second Betti number two.
As a result, many geometries and their classifications were unified under the framework of ambitoric geometry (see introduction in \cite{apostolov2016ambitoric}).
Importantly, the shape of ambitoric structures shows that factorization structures are not merely auxiliary computational tools but play an intrinsic role, they determine the Kähler structure. 

Appendix C of \cite{apostolov2015ambitoric} shows that regular ambitoric geometries can be viewed as quotients of a 5-dimensional manifold of Sasaki type by Sasaki-Reeb vector fields.
Building on this idea, \cite{apostolov2021cr} extends the approach by studying weighted extremality of quotients of Sasaki-type manifolds in general dimension.
Additionally, it finds two explicit families of separable geometries, which describes in terms of CR twists: twists of orthotoric geometry \cite{apostolov2003geometry,apostolov_hamiltonian_2006,apostolov2004hamiltonian,apostolov_hamiltonian_2008,apostolov_hamiltonian_2008-1} and twists of a Kähler product of toric Riemann surfaces.
In real dimension 4, as \cite{apostolov2020levi} and \cite{apostolov2021cr} shows, these two families recover all ambitoric geometries.

The success of factorization structures in classifying extremal 4-orbifolds and the elegance of identifying these as natural quotients of Sasaki-type geometries motivated the author's thesis \cite{pucek2022extremal}, where the two aforementioned families of separable geometries were recognised to be associated with Veronese and product Segre factorization structures. \\

More generally, an $m$-dimensional factorization structure $\varphi:\mathfrak{h}\to V_1^* \otimes \cdots \otimes V_m^*$, $\beta \in \mathfrak{h}$, and $m$ functions $A_1,\ldots,A_m$ of one variable determine the toric \textit{separable Kähler geometry}

\begin{align}\label{sep K-geom}
\nonumber
g_\beta&=
-
\sum_{j=1}^m
\left(
	\frac{\langle\partial_{x_j}\mu_\beta,\psi_j([1:x_j])\rangle}
		{A_j(x_j)}
	dx_j^2 +
	\frac{A_j(x_j)}
		{\langle\partial_{x_j}\mu_\beta,\psi_j([1:x_j])\rangle}
	\langle\partial_{x_j}\mu_\beta,dt\rangle^2
\right)\\\nonumber
\omega_\beta&=
\sum_{j=1}^mdx_j\wedge\langle\partial_{x_j}\mu_\beta,dt\rangle\\ 
J_\beta dx_j&=
-
\frac{A_j(x_j)}{\langle\partial_{x_j}\mu_\beta,\psi_j([1:x_j])\rangle}\langle\partial_{x_j}\mu_\beta,dt\rangle
\hspace{1cm}
J_\beta dt=
\sum_{j=1}^m\frac{\psi_j([1:x_j])\text{ mod }\beta}{A_j(x_j)}dx_j,
\end{align}
where $dt$ is a 1-form valued in $\mathfrak{h} / \langle \beta \rangle$, $\psi_j$, $j=1,\ldots,m$, are factorization curves (\Cref{s21}) in appropriate affine charts, and $\mu_\beta = \varphi^t x / \langle x, \varphi \beta \rangle$ with $x = (1,x_1) \otimes \cdots \otimes (1,x_m)$ is the momentum map valued in the affine chart given by $\beta$.
In particular, each $\partial_{x_j} \mu_\beta$ lies in the annihilator $\beta^0$, ensuring that the pairing $\langle \partial_{x_j} \mu_\beta, dt \rangle$ is well-defined.

While it is now possible to explicitly write down the Kähler structure for the standard Segre-Veronese factorization structure (see \Cref{SV def}), doing so would require introducing additional notation related to grouped slots, which we omit for brevity.
A detailed exposition will appear in future work.
However, we note that the Kähler structure (\ref{sep K-geom}) is obtained as the quotient of a toric separable CR geometry whose acting torus has the Lie algebra $\mathfrak{h}$, and which is equipped with special coordinates $x_1,\ldots,x_m$, called separable (see \cite{apostolov2021cr}), in which the CR structure depends on functions of one variable.
Specifically, it resembles $J_\beta dt$ from \eqref{sep K-geom} which depends on functions $\psi_j/A_j \text{ mod } \beta$, $j=1,\ldots,m,$ of one variable.\\

The advantages of separable Kähler and CR geometries are three-fold: a unified framework for many examples, each facet of the rational Delzant polytope or polyhedron of a separable geometry is described by $x_j = \text{\textit{const.}}$ for some $j$, and, unknowns in PDEs involving these geometries depend on functions of one variable. \\

Separable geometries associated with factorization structures provide a framework for vast number of toric CR and Kähler geometries which are amenable to uniform computations.
Examples appearing in the literature are the aforementioned ambitoric geometries, twists of a Kähler product of toric Riemann surfaces, and twists of orthotoric geometries, which together correspond to two simplest factorization structures: Veronese and product Segre.
The overflow of new separable geometries arises from the vast number of factorization structures.
A classification of factorization structures, would not only describe all local separable geometries in a given dimension $n$, but, as in ambitoric case, could also facilitate the classification of extremal structures on certain $n$-orbifolds.

The study of global behaviour of separable geometries includes compactifications, which are related to rational Delzant polytopes, and general boundary behaviour associated with polyhedra.
In the compact case, the image of the momentum map of a toric separable Kähler geometry is in particular a compatible polytope (\Cref{compatible polytope}) with at most $2m$ facets, where $m$ is the complex dimension of the geometry.
Any such carries separable coordinates $x_j$, $j=1,\ldots,m$, in which the facets are described by $x_j = const.$ (see the last paragraph of \Cref{discr geom} for separable coordinates).
This enables the derivation of simple necessary and sufficient conditions for compactification, similar to those in \cite{apostolov2004hamiltonian}.
A key step in compactifying is fixing an underlying rational Delzant polytope compatible with a factorization structure, constructed using Vandermonde identities (\Cref{s32}).
As part of this broader framework, understanding the effects of newly introduced operations on factorization structures, namely the product (\Cref{products}) and quotient (\Cref{quotient subsection}), on separable geometries remain to be explored.\\

Separable geometries provide a favourable setting where geometric PDEs are likely to be solved explicitly.
In particular, this applies to the problem of finding Calabi's extremal metrics, also famous for its connection with K-stability, which is governed by the extremality equation, a PDE seeking metrics whose scalar curvature is a Killing potential. 
While explicit solutions are rare and often obtained through ad-hoc methods, separable geometries offer a systematic framework for recovering known solutions and discovering new ones.

The Segre-Veronese factorization structure underpins this framework.
Solutions of the extremality equation for associated separable geometries are rational functions depending on tensors $\Gamma_j$, $j=1,\ldots,k$, determining the factorization structure and on finitely many parameters, whose number relates to degrees of involved factorization curves (see \Cref{degree fc}).
A general strategy for solving the PDE is to verify, using generalised Vandermonde identities (see \Cref{gen VI}), which solutions of compatibility conditions satisfy the PDE.
The shapes and decomposability of tensors $\Gamma_j$, $j=1,\ldots,k$, (see \Cref{tensors split} and \Cref{symmetric SV}) are crucial in obtaining useful compatibility conditions, which necessitates the classification of Segre-Veronese factorization structures.
This article achieves a partial classification by characterising decomposable Segre-Veronese factorization structures (\Cref{classification}).
Already for one of the simplest of such structures, the product Segre-Veronese factorization structure associated to a partition, known extremal metrics \cite{apostolov2021cr} are recovered for the two trivial partitions, and new solutions are obtained for any non-trivial partition.
Furthermore, analysis for decomposable Segre-Veronese factorization structures corresponding to partitions $m=d_1+\cdots+d_k$ for small $k$ indicates that the extremality equation for the associated geometries can be solved uniformly, as opposed to case-by-case approach.

\section{Structure theory}\label{s2}

The first section offered an abundance of examples of factorization structures, all of which were, notably, of Segre-Veronese type.
In fact, Segre-Veronese factorization structures are the only known examples of factorization structures.
Naturally, one can ask
\begin{enumerate}
\item[(i)] Is every factorization structure of Segre-Veronese type? 
\item[(ii)] What is the classification of Segre-Veronese factorization structures?
\end{enumerate}
These questions are the prime motivation for this section.
To address them, we develop an abstract theory of factorization structures, focusing on key aspects such as factorization curves, quotients, and complexifications.
All of these are essential in proving main results.

One of the main results is the characterisation of Segre-Veronese factorization structures: a factorization structure is a Segre-Veronese factorization structure if and only if all of its factorization curves are decomposable.
Therefore, question (i) can be formulated intrinsically as \Cref{question}, asking if every factorization curve is decomposable.

Examples of the first section make it clear that classifying defining tensors of Segre-Veronese factorization structures requires an intense effort.
However, focusing on decomposable Segre-Veronese factorization structures, we achieve their characterisation in \Cref{classification} as iterated products of Veronese factorization structures.
This result holds for a broader class of Segre-Veronese factorization structures than decomposable ones, as explained  in \Cref{classification}.

We note that results of this section have applications beyond the internal theory of factorization structures.
For example, \Cref{faces} is used to describe linear spaces determining faces of compatible cones and polytopes, which is crucial for understanding their geometric properties.
For further applications, see \Cref{discr geom} and \Cref{dg motivation}. 

\subsection{Factorization curves}\label{s21}
The defining condition of factorization structures \eqref{wfs def condition} invites us to consider generically defined curves
\begin{align}\label{rational maps defn}
\mathbb{P}(V_j) &\dashrightarrow \mathbb{P}(\mathfrak{h})\nonumber\\
\ell &\mapsto \varphi^{-1} \left( \varphi(\mathfrak{h}) \cap \Sigma_{j,\ell}^0 \right)
\end{align}
for $j=1,\ldots,m$.

For example, in the 2-dimensional Segre factorization structure $V_1^*\otimes \Gamma_1 + \Gamma_2 \otimes V_2^* \hookrightarrow V_1^* \otimes V_2^*$, we have two curves
\begin{align}
\mathbb{P}(V_1) \backslash \{ \Gamma_2 \} &\to \mathbb{P}(V_1^*\otimes \Gamma_1 + \Gamma_2 \otimes V_2^*) \nonumber \\
\ell &\mapsto (V_1^*\otimes \Gamma_1 + \Gamma_2 \otimes V_2^*) \cap \ell^0 \otimes V_2^* = \ell^0 \otimes \Gamma_1 \label{Segre lines 1}
\end{align}
and
\begin{align}
\mathbb{P}(V_2) \backslash \{ \Gamma_1 \} &\to \mathbb{P}(V_1^*\otimes \Gamma_1 + \Gamma_2 \otimes V_2^*) \nonumber \\
\ell &\mapsto (V_1^*\otimes \Gamma_1 + \Gamma_2 \otimes V_2^*) \cap  V_1^* \otimes \ell^0 = \Gamma_2 \otimes \ell^0, \label{Segre lines 2}
\end{align}
both being (generically defined) lines in $\mathbb{P}^2$.
Note that the points which are excluded from domains of these lines are exactly those where the formula \eqref{rational maps defn} does not determine a point in a projective space.
Similarly, for a general Segre factorization structure, all of its curves in the above sense are generically defined lines. \par
In the case of Veronese factorization structure $S^mW^* \hookrightarrow (W^*)^{\otimes m}$, its first curve reads
\begin{align}
\mathbb{P}(W) &\to \mathbb{P}(S^mW^*) \nonumber \\
\ell &\mapsto S^mW^* \cap \ell^0 \otimes (W^*)^{\otimes (m-1)} = (\ell^0)^{\otimes m}. \label{Veronese curve}
\end{align}
This curve is defined globally, i.e., for all $\ell \in \mathbb{P}(W)$, and is known as the rational normal curve.
Because the domain of any other curve is again $\mathbb{P}(W)$, and $S^mW^* \cap \Sigma_{j,\ell}^0 = (\ell^0)^{\otimes m}$ for any $\ell \in \mathbb{P}(W)$ and any $j=1,\ldots,m$, all curves coincide. \par
For the standard Segre-Veronese factorization structure \eqref{SV inclusion}, these curves were already found in \eqref{standard curves}, and, similarly to the example above, coincide in grouped slots. More concretely, for each $i=1,\ldots,k$ we have $d_i$ identical curves
\begin{align}\label{SV generic curves}
\mathbb{P}(W_i) &\to \mathbb{P}\left( \sum_{j=1}^{k} ins_j \left( S^{d_j}W_j^*\otimes \Gamma_j \right) \right) \nonumber \\
\ell &\mapsto ins_i \left( (\ell^0)^{\otimes d_i} \otimes \Gamma_i \right),
\end{align}
whose locus of indeterminacy is at points $\ell\in\mathbb{P}(V_j)$ for which
$\dim \left( \varphi(\mathfrak{h}) \cap \Sigma_{j,\ell}^0 \right) > 1$.
The latter happens if a defining tensor $\Gamma_j$, $j\neq i$, decomposes in grouped $i$-slots, similarly as in 2-dimensional Segre factorization structure above. \\

Now we establish a way of extending these generically defined curves into projective curves. The first step is

\begin{proposition}\label[proposition]{curves are reg}
Let $\varphi: \mathfrak{h} \to V^*$ be a real/complex factorization structure of dimension $m$. The generically defined curve \eqref{rational maps defn} is a regular map on an open and non-empty subset $W \subset \mathbb{P}(V_j)$ of degree at most $m$, i.e., it is given by homogeneous polynomials of the same degree (at most $m$) in homogeneous coordinates on $\mathbb{P}(V_j)$ and $\mathbb{P}(\mathfrak{h})$. More concretely, $W$ is an open subset of the open set
$$U=
\{\ell\in\mathbb{P}(V_j)\hspace{.1cm}|\hspace{.1cm}
\dim\left( \varphi(\mathfrak{h})\cap\Sigma_{j,\ell}^0 \right)
=1\}.$$
\end{proposition}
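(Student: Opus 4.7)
The plan is to realise the map $\ell \mapsto \varphi^{-1}\bigl(\varphi(\mathfrak{h}) \cap \Sigma_{j,\ell}^0\bigr)$ as the kernel of a one-parameter family of linear maps whose matrix entries depend linearly on homogeneous coordinates of $\mathbb{P}(V_j)$, and then extract a generator of that kernel via the classical minor construction for the null space of a matrix of corank one. First I would fix a basis $e_1, e_2$ of $V_j$, so that a point $\ell \in \mathbb{P}(V_j)$ corresponds to $[x:y]$ via $v = xe_1 + ye_2$, together with bases $\alpha_1,\ldots,\alpha_{m+1}$ of $\varphi(\mathfrak{h})$ and an arbitrary basis of $\hat{V}_j^*$. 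Since $\rho_{j,v}$ is linear in $v$ by its definition \eqref{contraction defn}, the composite $R(x,y) := \rho_{j,v} \circ \varphi : \mathfrak{h} \to \hat{V}_j^*$ is represented by a $2^{m-1} \times (m+1)$ matrix whose entries are homogeneous of degree one in $(x,y)$, and its kernel at $[x:y]$ is precisely $\varphi^{-1}(\varphi(\mathfrak{h}) \cap \Sigma_{j,\ell}^0)$.

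On $U$ this kernel is by assumption one-dimensional, so $\mathrm{rank}\, R(x,y) = m$ there. Fixing any base point $\ell_0 \in U$, the rank condition lets me select $m$ rows of $R(x_0,y_0)$ that are linearly independent; this is possible because $\dim \hat{V}_j^* = 2^{m-1} \geq m$ for every $m \geq 2$. I would then form the $m \times (m+1)$ submatrix $N(x,y)$ of $R(x,y)$ cut out by the corresponding $m$ coordinates of $\hat{V}_j^*$, whose entries remain linear in $(x,y)$, and let $W \subset U$ be the open neighbourhood of $\ell_0$ on which some $m \times m$ minor of $N$ is non-zero. On $W$ the matrix $N$ has rank $m$, so its kernel is one-dimensional; since $\ker R \subset \ker N$ identically and both sides have dimension one on $W$, these kernels coincide.

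The kernel of an $m \times (m+1)$ matrix of rank $m$ is classically spanned by the vector $(c_1,\ldots,c_{m+1})$ with $c_i(x,y) = (-1)^{i-1} \det N_i(x,y)$, where $N_i$ is the submatrix obtained by deleting the $i$-th column. Each $c_i$ is the determinant of an $m \times m$ matrix of degree-one homogeneous polynomials, hence is homogeneous of degree $m$ in $(x,y)$. This exhibits the curve on $W$ in homogeneous coordinates as
$$[x:y] \;\longmapsto\; \Bigl[\sum_{i=1}^{m+1} c_i(x,y)\,\alpha_i\Bigr] \in \mathbb{P}(\mathfrak{h}),$$
with all components homogeneous of the same degree, at most $m$; dividing out any common polynomial factor among the $c_i$'s would only lower this common degree, so the bound $m$ is preserved.

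The only genuine subtlety I anticipate is ensuring that $\ker N = \ker R$ on an honest open subset rather than at the single base point $\ell_0$, but this is settled by the dimension count above: the inclusion $\ker R \subset \ker N$ holds pointwise, and on the open locus where $N$ has maximal rank $m$ both kernels are one-dimensional and therefore agree. Everything else reduces to standard linear algebra with a polynomial parameter, so I expect no further obstacles.
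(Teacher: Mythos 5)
Your proposal is correct and follows essentially the same route as the paper: both arguments reduce the curve to solving a linear system whose dependence on $\ell=[x:y]$ is homogeneous of degree one in exactly $m$ of the relevant equations, and then extract the one-dimensional solution space by Cramer's rule (in your case, the signed maximal minors of the $m\times(m+1)$ matrix $N$), yielding components homogeneous of degree at most $m$. The only cosmetic difference is that you work directly in $\mathfrak{h}$ with the composite $\rho_{j,v}\circ\varphi$, whereas the paper sets up the equivalent system in $V^*$ with $2^m$ variables; your row-selection and kernel-comparison step ($\ker R\subset\ker N$ with both one-dimensional on $W$) correctly handles the same genericity issue the paper addresses by choosing $m$ independent equations from \eqref{sigmaeq}.
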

\begin{proof}
We describe $\varphi(\mathfrak{h}) \cap \Sigma_{j,\ell}^0$ as a solution of a linear system of $2^m-1$ equations, which depend homogeneously on $\ell$, with $2^m$ variables. Then, we apply Cramer's rule to show the claim for $\ell \mapsto \varphi(\mathfrak{h}) \cap \Sigma_{j,\ell}^0$, and thus for \eqref{rational maps defn}.

Fix a basis of $V_j^*$, $j=1,\ldots,m$, and let
$c^{a_1\cdots a_m}: V^*=V_1^*\otimes\cdots\otimes V_m^* \to \mathbb{F}$,
$a_j\in\{1,2\}$,
be the corresponding standard coordinates, where $\mathbb{F}$ denotes the field $\mathbb{R}$ or $\mathbb{C}$ depending on the factorization structure being real or complex, respectively. For $\ell\in U$, the subspace $\Sigma_{j,\ell}^0$ in $V^*$ is then described by $2^{m-1}$ independent linear equations
\begin{align}\label{sigmaeq}
	xc^{a_1\cdots a_{j-1}1a_{j+1}\cdots a_m}+
	yc^{a_1\cdots a_{j-1}2a_{j+1}\cdots a_m}
	=0,
	\text{ }
	a_i\in\{1,2\}
	\text{ for }
	i\neq j,
\end{align}
where $\ell^0=[-y:x]$. Note, these can be viewed as equations of homogeneous polynomials of degree one in $x$ and $y$ with coefficients $c^{\cdots}$'s. The $(m+1)$-dimensional subspace $\varphi(\mathfrak{h})$ in $V^*$ can be described via $2^m-(m+1)$ independent linear equations, call that system (E), which do not depend on $\ell$. Finally, the subspace $\varphi(\mathfrak{h})\cap\Sigma_{j,\ell}^0$, which is one-dimensional for a fixed generic $\ell$, is the solution to the system of $2^{m-1}+2^m-(m+1)$ linear equations, \eqref{sigmaeq} and (E). Clearly, this system has only $2^m-1$ independent equations, and these can be chosen as the system (E) together with another $m$ independent linear equations from \eqref{sigmaeq}. The latter stay independent on an open subset $V\subset\mathbb{P}(V_k)$ containing $\ell$. Thus, for $\ell\in W := U\cap V$, knowing the intersection $\varphi(\mathfrak{h})\cap\Sigma_{j,\ell}^0$ is equivalent to a system of $2^m-1$ independent linear equations, $m$ of which are homogeneous of degree one in $\ell$ and the others do not depend on $\ell$. Using Cramer's rule to solve this system (see \cite{gorodentsev2016algebra}) shows that $\varphi(\mathfrak{h})\cap\Sigma_{j,\ell}^0$ depends on $\ell$ in a homogeneous way and the degree of homogeneity is at most $m$ which, for example, is attained in the case when $\varphi(\mathfrak{h})=S^mW^*$.
\end{proof}

\begin{lemma}\label[lemma]{curve extension}
Let $U$ be an open non-empty subset of $\mathbb{P}^1$. A regular map $f:U\to\mathbb{P}^n$ extends uniquely to a regular map on $\mathbb{P}^1$.
\end{lemma}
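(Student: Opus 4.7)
The plan is to split into uniqueness and existence, with existence being the main content.

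For uniqueness, I would use that $\mathbb{P}^1$ is irreducible in the Zariski topology, so any non-empty open $U$ is dense. Any two regular maps $\mathbb{P}^1\to\mathbb{P}^n$ agreeing on the dense set $U$ therefore coincide on $\mathbb{P}^1$, since $\mathbb{P}^n$ is separated and so the locus of agreement is closed and contains $U$.

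For existence, the strategy is to write $f$ globally on $U$ in terms of homogeneous polynomials and then cancel common factors. Let $[s:t]$ be homogeneous coordinates on $\mathbb{P}^1$. On each of the two standard affine charts $\{s\neq 0\}$ and $\{t\neq 0\}$ intersected with $U$, the map $f$ is given by tuples of regular functions, and these are rational functions in $t/s$ (resp.\ $s/t$) whose denominators can vanish only at the finitely many points of $\mathbb{P}^1\setminus U$. Clearing denominators, homogenizing, and matching the two representations on the overlap yields homogeneous polynomials $F_0,\ldots,F_n \in \mathbb{F}[s,t]$ of a common degree such that $f([s:t]) = [F_0(s,t):\cdots:F_n(s,t)]$ on $U$. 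Now let $g = \gcd(F_0,\ldots,F_n)$ in the UFD $\mathbb{F}[s,t]$ and set $\tilde{F}_i = F_i/g$, which remain homogeneous of a common degree. By the factor theorem applied to homogeneous polynomials in two variables, any common zero $[a:b]\in\mathbb{P}^1$ of all $\tilde{F}_i$ would force $bs - at$ to divide each $\tilde{F}_i$, contradicting coprimality; this argument is valid over both $\mathbb{R}$ and $\mathbb{C}$. Hence the $\tilde{F}_i$ have no common zero on $\mathbb{P}^1$, so $[\tilde{F}_0:\cdots:\tilde{F}_n]$ defines a regular map on all of $\mathbb{P}^1$ that agrees with $f$ on $U$, completing the extension.

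The main obstacle is the step of producing a single global polynomial representation of $f$ on $U$ from the local charts; after that, the cancellation-of-common-factors trick works essentially automatically. The reason this procedure succeeds on $\mathbb{P}^1$, and would fail in higher dimensions, is precisely the UFD structure of $\mathbb{F}[s,t]$ together with the factor theorem for binary forms, which ensures that the indeterminacy locus of a rational map on a smooth projective curve can always be cleared.
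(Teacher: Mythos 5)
Your proposal is correct and follows essentially the same route as the paper: represent $f$ by homogeneous polynomials of a common degree, cancel the common factor (gcd), and observe that the resulting forms can have no common zero on $\mathbb{P}^1$. You supply more detail than the paper does — notably the chart-by-chart construction of the global polynomial representation and the explicit uniqueness argument via density of $U$ — but the underlying idea is identical.
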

\begin{proof}
In homogeneous coordinates, such map $f$ is given by $f([x:y])=[f_0([x:y]):\cdots:f_n([x:y])]$ where $f_j$ are homogeneous polynomials of the same degree. The expression $[f_0([x:y]):\cdots:f_n([x:y])]$ fails to define a point in $\mathbb{P}^n$ if and only if all $f_j$ vanish at $[x:y]$. However, this means that all $f_j$ have a factor in common which can be removed. Because any open non-empty set in $\mathbb{P}^1$ is $\mathbb{P}^1$ without finitely many points, $f$ extends this way to whole $\mathbb{P}^1$.
\end{proof}

Combining \Cref{curves are reg} and \Cref{curve extension} allows us to define factorization curves as extensions of \eqref{rational maps defn}.

\begin{defn}\label[defn]{curves defn}
Let $\varphi:\mathfrak{h}\to V^*$ be a real/complex factorization structure of dimension $m$. For each $j\in\{1,\ldots,m\}$ we define \textit{factorization curve} $\psi_j:\mathbb{P}(V_j)\to\mathbb{P}(\mathfrak{h})$ as the extension of the regular map generically given by \eqref{rational maps defn}.
\end{defn}

We continue with examples of generically defined curves from above.

\begin{example}\label[example]{segre lines example}
Since extensions of the generically defined curves \eqref{rational maps defn} are unique, we conclude from \eqref{Segre lines 1} and \eqref{Segre lines 2} that the 2-dimensional Segre factorization structure has two distinct factorization curves, being lines
\begin{align}\label{segre 1}
\psi_1: \mathbb{P}(V_1) &\to \mathbb{P}(V_1^* \otimes \Gamma_1 + \Gamma_2 \otimes V_2^*) \nonumber \\
\ell &\mapsto \ell^0 \otimes \Gamma_1
\end{align}
and
\begin{align}\label{segre 2}
\psi_2: \mathbb{P}(V_2) &\to \mathbb{P}(V_1^* \otimes \Gamma_1 + \Gamma_2 \otimes V_2^*) \nonumber \\
\ell &\mapsto \Gamma_2 \otimes \ell^0,
\end{align}
intersecting at one point $\Gamma_2 \otimes \Gamma_1$.
Note also that respective linear spans of images of \eqref{segre 1} and \eqref{segre 2} are $V_1^* \otimes \Gamma_1$ and $\Gamma_2 \otimes V_2^*$ (see \Cref{fig2}).
A general $m$-dimensional Segre factorization structure has $m$ distinct factorization curves, all being lines.
However, their intersections can be arbitrarily complicated.
For example, for 3-dimensional Segre factorization structure from \Cref{indec Segre}, there is no intersection between any two factorization curves/lines.
On the other extreme, in the product Segre factorization structure from \Cref{one intersection example} corresponding to the partition $m = 1 + \cdots + 1$, all factorization lines intersect at the unique point $\otimes_{r=1}^m a^r$.
One can form iterative products of 1-dimensional factorization structures to obtain an $m$-dimensional Segre factorization structure with decomposable defining tensors and with prescribed intersections of factorization lines.
\end{example}

As already found in \eqref{Veronese curve}, all factorization curves in a Veronese factorization structure coincide, $\psi_1 = \cdots = \psi_m$, being the rational normal curve of degree $m$. Such a curve has two properties we frequently use: its linear span is exactly $S^mW^*$, and any $m+1$ points on the curve are linearly independent \cite{harris2013algebraic}. 

\begin{example}\label[example]{k=2 curves example}
The Segre-Veronese factorization structure corresponding to the partition $m=d_1+d_2$ from \Cref{k=2 example}, abbreviated here as $\varphi:\mathfrak{h} \to V^*$, has two distinct factorization curves
\begin{align}
\mathbb{P}(W_1) &\to \mathbb{P}\left( S^{d_1}W_1^* \otimes \Gamma_1 + \Gamma_2 \otimes S^{d_2}W_2^* \right) \nonumber \\
\ell &\mapsto (\ell^0)^{\otimes d_1} \otimes \Gamma_1 =
\varphi(\mathfrak{h}) \cap \Sigma_{1,\ell}^0 = \cdots = 
\varphi(\mathfrak{h}) \cap \Sigma_{d_1,\ell}^0
\end{align}
and
\begin{align}
\mathbb{P}(W_2) &\to \mathbb{P}\left( S^{d_1}W_1^* \otimes \Gamma_1 + \Gamma_2 \otimes S^{d_2}W_2^* \right) \nonumber \\
\ell &\mapsto \Gamma_2 \otimes (\ell^0)^{\otimes d_2} =
\varphi(\mathfrak{h}) \cap \Sigma_{d_1+1,\ell}^0 = \cdots =
\varphi(\mathfrak{h}) \cap \Sigma_{m,\ell}^0,
\end{align}
which respectively have degrees $d_1$ and $d_2$.
Their respective linear spans are $S^{d_1}W_1^* \otimes \Gamma_2$ and $\Gamma_1 \otimes S^{d_2}W_2^*$, and both are rational normal curves within their linear span.
They intersect if and only if both $\Gamma_1$ and $\Gamma_2$ are decomposable, in which case the intersection is the unique point $\Gamma_2 \otimes \Gamma_1$.
\end{example}

\begin{example}\label[example]{3-Veronese example}
To illustrate how products of factorization structures influence intersections of factorization curves, we discuss the Segre-Veronese factorization structure \eqref{k=3}, abbreviated here as $\varphi:\mathfrak{h} \to V^*$, whose distinct curves are
\begin{align}
\mathcal{C}_1: \mathbb{P}(W_1) &\to \mathbb{P}(\mathfrak{h}) \nonumber \\
\ell &\mapsto (\ell^0)^{\otimes d_1} \otimes \Gamma_1 \otimes \Gamma,
\end{align}
and
\begin{align}
\mathcal{C}_2: \mathbb{P}(W_2) &\to \mathbb{P}(\mathfrak{h}) \nonumber \\
\ell &\mapsto \Gamma_2 \otimes (\ell^0)^{\otimes d_2} \otimes \Gamma,
\end{align}
and
\begin{align}
\mathcal{C}_3: \mathbb{P}(W_3) &\to \mathbb{P}(\mathfrak{h}) \nonumber \\
\ell &\mapsto \Gamma_3 \otimes (\ell^0)^{\otimes d_3},
\end{align}
with respective degrees $d_1, d_2$ and $d_3$, and respective linear spans $S^{d_1}W_1^* \otimes \Gamma_1 \otimes \Gamma$, $\Gamma_2 \otimes S^{d_2}W_2^* \otimes \Gamma$ and $\Gamma_3 \otimes S^{d_3}W_3^*$.
The following analysis of pairwise intersections of $\mathcal{C}_1, \mathcal{C}_2$ and $\mathcal{C}_3$ clarifies how to choose defining tensors $\Gamma_1, \Gamma_2$ and $\Gamma$ to obtain prescribed intersection properties of the curves.
Similarly to \Cref{k=2 curves example}, the curves $\mathcal{C}_1$ and $\mathcal{C}_2$ intersect if and only if both $\Gamma_1$ and $\Gamma_2$ are decomposable, in which case the intersection is the unique point $\Gamma_2 \otimes \Gamma_1 \otimes \Gamma$.
Curves $\mathcal{C}_1$ and $\mathcal{C}_3$ intersect if and only if $\Gamma$ is decomposable and $\Gamma_3 = A \otimes \Gamma_1$ for some decomposable 1-dimensional space $A \subset S^{d_1}W_1^*$, in which case the intersection is the unique point $A \otimes \Gamma_1 \otimes \Gamma$.
Finally, curves $\mathcal{C}_2$ and $\mathcal{C}_3$ intersect if and only if $\Gamma$ is decomposable and $\Gamma_3 = \Gamma_2 \otimes B$ for some decomposable 1-dimensional space $B \subset S^{d_2}W_2^*$, in which case the intersection is the unique point $\Gamma_2 \otimes B \otimes \Gamma$.
\end{example}

\begin{example}\label[example]{SVfs curves}
The standard Segre-Veronese factorization structure \eqref{SV inclusion}, abbreviated here as $\varphi: \mathfrak{h} \to V^*$, has $k$ distinct factorization curves,
\begin{align}
\mathcal{C}_i: \mathbb{P}(W_i) &\to \mathbb{P}(\mathfrak{h}) \nonumber \\
\ell &\mapsto ins_i \left( (\ell^0)^{\otimes d_i} \otimes \Gamma_i \right),
\hspace{.5cm}
i=1,\ldots,k,
\end{align}
which relate to factorization curves $\psi_1,\ldots,\psi_m$ by $\mathcal{C}_i = \psi_{d_1+\cdots+d_{i-1}+1} = \cdots = \psi_{d_1+\cdots+d_{i-1}+d_i}$, $i=1,\ldots,k$, where $d_0$ is defined to be zero.
The linear span of $\mathcal{C}_i$ is $ins_i \left( S^{d_i}W_i^* \otimes \Gamma_i \right)$, $i=1,\ldots,k$, showing that the standard Segre-Veronese factorization structure is the sum of linear spans of its factorization curves.
Finally, the degree of $\mathcal{C}_i$ is $d_i$, $i=1,\ldots,k$.
\end{example}

The following lemma plays an essential role in the study of factorization curves.

\begin{lemma}\label[lemma]{alg geom curves unique}
Let $\varphi:\mathfrak{h}\to V^*$ be a complex factorization structure, and let $\psi_i:\mathbb{P}(V_i)\to\mathbb{P}(\mathfrak{h})$ and $\psi_j:\mathbb{P}(V_j)\to\mathbb{P}(\mathfrak{h})$, $i\neq j$, be two factorization curves whose images coincide in $\infty$-many points. Then $\im \psi_i = \im\psi_j$.
\end{lemma}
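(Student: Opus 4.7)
The approach is the classical algebraic-geometric fact that two distinct irreducible projective curves in $\mathbb{P}(\mathfrak{h})$ can meet only in finitely many points. So my plan is to show that $\im\psi_i$ and $\im\psi_j$ are irreducible closed subsets of $\mathbb{P}(\mathfrak{h})$ of dimension one, and then invoke this general fact.

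First I would argue irreducibility: by \Cref{curves defn} and \Cref{curve extension}, each $\psi_k$ is a regular map defined on all of $\mathbb{P}(V_k)\cong\mathbb{P}^1$. Since $\mathbb{P}^1$ is irreducible and complete, the image $\im\psi_k$ is an irreducible closed subset of $\mathbb{P}(\mathfrak{h})$.

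Next I would verify non-constancy, so that these images are genuinely one-dimensional. If $\psi_i$ were constant with value $L\in\mathbb{P}(\mathfrak{h})$, then on the non-empty open set $U=\{\ell\in\mathbb{P}(V_i)\mid\dim(\varphi(\mathfrak{h})\cap\Sigma_{i,\ell}^0)=1\}$ of \Cref{curves are reg} we would have $\varphi(L)\subseteq\Sigma_{i,\ell}^0$. Since being contained in $\Sigma_{i,\ell}^0$ is a Zariski-closed condition on $\ell\in\mathbb{P}(V_i)$ (it is the vanishing of the linear functionals obtained by pairing $\varphi(L)$ with a basis of $\Sigma_{i,\ell}$, and these depend polynomially on $\ell$), and since $U$ is dense in the irreducible $\mathbb{P}(V_i)$, this forces $\varphi(L)\subseteq\Sigma_{i,\ell}^0$ for every $\ell\in\mathbb{P}(V_i)$. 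Hence $\varphi(L)$ annihilates $\sum_{\ell}\Sigma_{i,\ell}=V$, so $\varphi(L)=0$, contradicting the injectivity of $\varphi$.

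With both $\im\psi_i$ and $\im\psi_j$ now known to be irreducible closed curves in $\mathbb{P}(\mathfrak{h})$, suppose for contradiction that $\im\psi_i\neq\im\psi_j$. Then $\im\psi_i\cap\im\psi_j$ is a proper closed subvariety of the irreducible one-dimensional variety $\im\psi_i$, hence has dimension $0$ and is a finite set of points. This contradicts the hypothesis that the two images coincide in infinitely many points, so $\im\psi_i=\im\psi_j$. The only genuinely non-formal step is the non-constancy argument; everything else is a routine application of irreducibility and dimension in projective space.
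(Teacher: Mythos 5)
Your proof is correct, but it follows a different route than the paper, and one step in it is redundant. The paper's argument is shorter: for $r\in\{i,j\}$, since $\im\psi_r$ is closed (image of a projective variety) and $\im\psi_i\cap\im\psi_j$ is closed in it, the preimage $\psi_r^{-1}(\im\psi_i\cap\im\psi_j)$ is a closed subset of $\mathbb{P}(V_r)\cong\mathbb{P}^1$ containing infinitely many points, hence equals $\mathbb{P}(V_r)$; thus $\im\psi_r\subseteq\im\psi_i\cap\im\psi_j$, giving the claim. This sidesteps any discussion of irreducibility or dimension of the images by pulling everything back to the source $\mathbb{P}^1$, where the dichotomy \emph{finite or everything} for closed sets is immediate. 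Your version works downstairs in $\mathbb{P}(\mathfrak{h})$: you show the images are irreducible closed curves and invoke the standard fact that two distinct irreducible projective curves meet in only finitely many points. Both are valid, but note that your non-constancy step, while correct, is unnecessary for this lemma: if $\im\psi_i$ were a single point then $\im\psi_i\cap\im\psi_j$ would have at most one point, directly contradicting the hypothesis of infinitely many coincidences, so non-constancy is forced by the hypothesis without appealing to the structure of $\Sigma_{i,\ell}^0$. (Your injectivity-of-$\varphi$ argument does, however, establish the stronger unconditional fact that factorization curves are never constant, which is independently useful -- the paper obtains it instead via \Cref{ell_k^0 fixed} and \Cref{curves are inj}.)
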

\begin{proof}
Let $r\in\{i,j\}$. Then, $\im\psi_r$ is a projective variety since the image of a projective variety is closed (see \cite{shafarevich1994basic}). Clearly, $\im\psi_i \cap \im\psi_j$ is closed in $\im\psi_r$ and contains $\infty$-many points. Therefore, $\psi_r^{-1} \left( \im\psi_i \cap \im\psi_j \right)$ is closed and contains $\infty$-many points, thus equals to $\mathbb{P}(V_r)$. This is equivalent with $\im\psi_r = \im\psi_i \cap \im\psi_j$, and hence $\im\psi_i = \im\psi_j$.
\end{proof}

Its first application shows that complex factorization curves are injective (see \Cref{curves are inj}).

\begin{proposition}\label[proposition]{ell_k^0 fixed}
Let $\mathfrak{h}$ be a complex factorization structure. Then $\forall \ell\in\mathbb{P}(V_j)\hspace{.2cm}\exists T\in \hat{V}_j^*$ such that $\varphi\circ\psi_j(\ell)=\ell^0\otimes T$, where $\ell^0$ is to be viewed at $j$-th slot.
\end{proposition}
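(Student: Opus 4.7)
The plan is to reduce the statement to a Zariski-closed condition on $\ell$ and then promote it from the generic locus to all of $\mathbb{P}(V_j)$ by density. First I identify $V^* = V_j^* \otimes \hat{V}_j^*$ placing $\ell^0$ at the $j$-th slot. Under this identification $\Sigma_{j,\ell}^0 = \ell^0 \otimes \hat{V}_j^*$; since $\ell^0 \subset V_j^*$ is one-dimensional, a line $\langle \alpha \rangle \in \mathbb{P}(V^*)$ admits a decomposition $\alpha = \ell^0 \otimes T$ with $T \in \hat{V}_j^*$ if and only if $\langle \alpha \rangle \subset \Sigma_{j,\ell}^0$. The proposition is therefore equivalent to proving the inclusion $\varphi \circ \psi_j(\ell) \subset \Sigma_{j,\ell}^0$ for every $\ell \in \mathbb{P}(V_j)$.

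On the Zariski-open subset
$$U = \{\ell \in \mathbb{P}(V_j) : \dim(\varphi(\mathfrak{h}) \cap \Sigma_{j,\ell}^0) = 1\},$$
the defining formula \eqref{rational maps defn} gives $\varphi \circ \psi_j(\ell) = \varphi(\mathfrak{h}) \cap \Sigma_{j,\ell}^0$, so the required inclusion is immediate. To extend it to the finitely many points $\ell \in \mathbb{P}(V_j) \setminus U$ at which $\psi_j$ is defined only through the extension provided by \Cref{curve extension}, I introduce the incidence variety
$$Z = \{ (\ell, [\alpha]) \in \mathbb{P}(V_j) \times \mathbb{P}(V^*) : \alpha \in \Sigma_{j,\ell}^0 \}.$$
The membership relation is a bilinear condition in the homogeneous coordinates of $\ell$ and $\alpha$, namely the vanishing of the contraction of $\alpha$ in the $j$-th slot with a representative of $\ell$, so $Z$ is Zariski-closed. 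By \Cref{curves are reg}, the composition $\varphi \circ \psi_j$ is regular, hence $\ell \mapsto (\ell, \varphi \circ \psi_j(\ell))$ is a morphism whose image lies in $Z$ over the dense open set $U$; closedness of $Z$ then forces the image to lie in $Z$ over all of $\mathbb{P}(V_j)$, which is precisely the desired inclusion.

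I do not anticipate a substantive obstacle: the whole content is the standard trick of promoting an identity valid on a Zariski-dense open subset to a global identity via a closed-incidence argument. The one step requiring care is checking that the incidence $Z$ is genuinely cut out by polynomial equations, which is transparent once one writes out the contraction pairing $V^* \otimes V_j \to \hat{V}_j^*$ in coordinates.
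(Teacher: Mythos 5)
Your proof is correct, and it takes a slightly different route from the paper's. The paper works on the generic locus $U$, explicitly factors $\varphi\circ\psi_j(\ell)=\ell^0\otimes T(\ell)$ there with $T$ given by homogeneous polynomials of degree $d-1$, extends $T$ to a regular map on all of $\mathbb{P}(V_j)$ via \Cref{curve extension}, and then invokes \Cref{alg geom curves unique} to identify the resulting curve $\mathcal{C}(\ell)=\ell^0\otimes T(\ell)$ with $\varphi\circ\psi_j$. You instead observe that, because $\ell^0\subset V_j^*$ is one-dimensional, the conclusion is equivalent to the containment $\varphi\circ\psi_j(\ell)\subset\Sigma_{j,\ell}^0$, which holds on $U$ by the defining formula and is a Zariski-closed (bihomogeneous) condition on the graph of the regular map $\varphi\circ\psi_j$, hence propagates from the dense open set $U$ to all of $\mathbb{P}(V_j)$ by irreducibility of $\mathbb{P}^1$. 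Both arguments are at bottom the same density principle, but yours is more economical: it sidesteps the division step of extracting the linear factor $\ell^0$ from the degree-$d$ defining polynomials and does not need the uniqueness lemma for curve extensions. What the paper's version buys in exchange is the extra datum that $T$ itself is a regular map $\mathbb{P}(V_j)\to\mathbb{P}(\hat{V}_j^*)$ of degree $d-1$, which is mildly stronger than the pointwise existence asserted in the statement; for the statement as written, and for its immediate use in \Cref{curves are inj}, your argument suffices.
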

\begin{proof}
Suppose the defining polynomials of $\psi_j$ are of degree $d$. Therefore, on an open non-empty subset $U\subset\mathbb{P}(V_j)$ where $\varphi\circ\psi_j(\ell)=\varphi(\mathfrak{h})\cap\Sigma_{j,\ell}^0$, there exist $T(\ell)\in\mathbb{P}(\hat{V}_j^*)$ given by homogeneous polynomials of degree $d-1$ such that $\varphi\circ\psi_j = \ell^0\otimes T(\ell)$. By \Cref{curve extension}, the map $\ell\mapsto T(\ell)$ uniquely extends to a regular map $T:\mathbb{P}(V_j) \to \mathbb{P}(\hat{V}_j^*)$, and thus defines the curve $\mathcal{C}: \mathbb{P}(V_j) \to \mathbb{P}(V^*)$ by $\mathcal{C}(\ell)=\ell^0\otimes T(\ell)$. Since $\mathcal{C}$ and $\varphi\circ\psi_j$ agree on an open non-empty set, \Cref{alg geom curves unique} shows $\mathcal{C}=\varphi\circ\psi_j$.
\end{proof}

\begin{corollary}\label[corollary]{curves are inj}
Factorization curves in a complex factorization structure are injective. 
\end{corollary}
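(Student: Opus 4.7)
The plan is to deduce this corollary directly from Proposition \ref{ell_k^0 fixed}, the injectivity of $\varphi$, and the uniqueness of rank-one tensor decomposition. Suppose, toward showing injectivity of a factorization curve $\psi_j$, that $\psi_j(\ell_1) = \psi_j(\ell_2)$ for two points $\ell_1, \ell_2 \in \mathbb{P}(V_j)$. Since $\varphi$ is injective on $\mathfrak{h}$, this forces $\varphi\circ\psi_j(\ell_1) = \varphi\circ\psi_j(\ell_2)$ as 1-dimensional subspaces of $V^*$.

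Next, I would invoke Proposition \ref{ell_k^0 fixed} to write $\varphi\circ\psi_j(\ell_r) = \ell_r^0 \otimes T_r$ for $r = 1, 2$, where $T_r \in \hat{V}_j^*$ is nonzero and the $\ell_r^0$ factor sits in the $j$-th tensor slot. Under the identification $V^* \cong V_j^* \otimes \hat{V}_j^*$ obtained by moving the $j$-th slot to the front, the two nonzero decomposable tensors $\ell_r^0 \otimes T_r$ span the same line.

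The final step is to apply uniqueness of the rank-one factorization in a tensor product of two vector spaces: if two nonzero simple tensors $u_1 \otimes w_1$ and $u_2 \otimes w_2$ span the same line in $V_j^* \otimes \hat{V}_j^*$, then in particular $\langle u_1\rangle = \langle u_2\rangle$ in $V_j^*$. This gives $\ell_1^0 = \ell_2^0$ in $\mathbb{P}(V_j^*)$, and since the annihilator map is a bijection between lines in $V_j$ and lines in $V_j^*$, we conclude $\ell_1 = \ell_2$.

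I do not anticipate any real obstacle: the heavy lifting was performed in Proposition \ref{ell_k^0 fixed}, which pins down the shape of $\varphi\circ\psi_j(\ell)$, and what remains is an elementary rank-one uniqueness statement. The only minor care needed is to observe that $T_r \neq 0$, as otherwise $\varphi\circ\psi_j(\ell_r)$ would be the zero subspace, contradicting that $\psi_j(\ell_r)$ is a point of $\mathbb{P}(\mathfrak{h})$ and that $\varphi$ is injective.
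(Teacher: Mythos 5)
Your proof is correct and follows exactly the paper's argument: both invoke \Cref{ell_k^0 fixed} to write $\varphi\circ\psi_j(\ell)=\ell^0\otimes T(\ell)$ and then read off $\ell_1^0=\ell_2^0$ from the equality of the two rank-one tensors. You merely spell out the rank-one uniqueness step and the nonvanishing of $T$ more explicitly than the paper does.
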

\begin{proof}
If
$\varphi\circ\psi_k(\ell_k)=\varphi\circ\psi_k(\tilde{\ell}_k)$,
i.e.,
$\ell_k^0\otimes T(\ell_k)=\tilde{\ell}_k^0\otimes T(\tilde{\ell}_k)$,
then $\ell_k^0=\tilde{\ell}_k^0$, and thus $\ell_k=\tilde{\ell}_k$.
\end{proof}

\subsection{Complexification}

Let $\varphi^\mathbb{C}:\mathfrak{h}\otimes\mathbb{C}\to V^*\otimes\mathbb{C} = V_1^* \otimes \mathbb{C} \otimes \cdots \otimes V_m^* \otimes \mathbb{C}$ be the complexification of a real factorization structure $\varphi:\mathfrak{h}\to V^*$, and denote 
$$
(V_1^*\otimes_\mathbb{R}\mathbb{C}) \otimes_\mathbb{C} \cdots \otimes_\mathbb{C} (V_{j-1}^*\otimes_\mathbb{R}\mathbb{C})\otimes_\mathbb{C}
L
\otimes_\mathbb{C}(V_{j+1}^*\otimes_\mathbb{R}\mathbb{C})\otimes_\mathbb{C}\cdots\otimes_\mathbb{C}(V_m^*\otimes_\mathbb{R}\mathbb{C})
$$
by $\mathbb{C}\Sigma_{j,L}^0$ for any complex 1-dimensional subspace $L\subset V_j^*\otimes\mathbb{C}$. Such a complexification is called a \textit{complexified factorization structure}.

\begin{proposition}
A map $\varphi:\mathfrak{h}\to V^*$ is a real factorization structure if and only if its complexification $\varphi^\mathbb{C}:\mathfrak{h}\otimes\mathbb{C}\to V^*\otimes \mathbb{C}$ is a complex factorization structure.
\end{proposition}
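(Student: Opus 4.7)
The plan is to translate the defining dimension condition between the real and complex settings using the standard identity
\[
\dim_{\mathbb{R}}\bigl(A \cap B\bigr) = \dim_{\mathbb{C}}\bigl((A \otimes_{\mathbb{R}} \mathbb{C}) \cap (B \otimes_{\mathbb{R}} \mathbb{C})\bigr)
\]
for real subspaces $A, B$ of a real vector space, specialised to $A = \varphi(\mathfrak{h})$ and $B = \Sigma^0_{j,\ell}$; this rests on the fact that complexification commutes with intersections and preserves dimension. Two structural facts will be combined with this identity: (i) the locus where $\dim(\text{intersection}) \geq k$ is Zariski-closed in $\ell$, cut out by the vanishing of maximal minors of a matrix linear in $\ell$ (as in the proof of \Cref{curves are reg}); and (ii) on the projective line $\mathbb{P}^1$ (real or complex), proper Zariski-closed subsets are finite, and the real points $\mathbb{P}(V_j) \subset \mathbb{P}(V_j \otimes \mathbb{C})$ form a Zariski-dense subset.

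For the forward direction, assuming $\varphi$ is a real factorization structure, I would first argue that the closed condition $\dim_{\mathbb{R}} \geq 1$ holds on a non-empty open subset of $\mathbb{P}(V_j) \cong \mathbb{P}^1$, hence on all of it. Via the displayed identity, the complex closed condition $\dim_{\mathbb{C}} \geq 1$ is then satisfied at every real $L = \ell \otimes \mathbb{C}$, hence on a Zariski-dense subset of $\mathbb{P}(V_j \otimes \mathbb{C})$, and therefore everywhere on $\mathbb{P}(V_j \otimes \mathbb{C})$. Picking any real $\ell$ where $\dim_{\mathbb{R}} = 1$ yields a complex $L$ with $\dim_{\mathbb{C}} = 1$, providing a non-empty open locus where the complex condition is sharp. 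Injectivity of $\varphi^{\mathbb{C}}$ is inherited from injectivity of $\varphi$ by flatness of $\mathbb{C}$ over $\mathbb{R}$.

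For the converse, if $\varphi^{\mathbb{C}}$ is a complex factorization structure, the locus $U_j^{\mathbb{C}} \subseteq \mathbb{P}(V_j \otimes \mathbb{C})$ where $\dim_{\mathbb{C}} = 1$ is non-empty Zariski-open, hence has finite complement; its trace on $\mathbb{P}(V_j)$ omits at most finitely many real points, so is Zariski-open and non-empty in $\mathbb{P}(V_j)$, and the displayed identity yields $\dim_{\mathbb{R}} = 1$ throughout it. The only genuine obstacle is matching the notion of "generic" across the two settings; this is fully resolved by the Zariski density of real lines in the complex projective line together with the triviality of the Zariski topology on $\mathbb{P}^1$, so beyond these observations the argument is essentially formal.
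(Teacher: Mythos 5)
Your proposal is correct and follows essentially the same route as the paper's proof: both rest on the fact that complexification preserves dimensions of intersections (the paper's chain of equalities \eqref{real-cpx identity curves}), the Zariski-closedness of the degeneracy loci $\dim \geq d$ (the paper's \Cref{Schubert rem} and the sets $Q_d$), and the density/cofiniteness of real points in the complex projective line to transfer genericity in both directions. Your explicit remarks on injectivity via flatness and on matching the two notions of ``generic'' are minor elaborations of the same argument, not a different method.
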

Before we prove it, we remark on a general property which will be used multiple times and also in the proof.
\begin{rem}\label[rem]{Schubert rem}
In general, for a real/complex factorization structure, the set 
\begin{align}
	U_d:=\{\ell\in\mathbb{P}(V_j):\dim\left( \varphi(\mathfrak{h})\cap\Sigma_{j,\ell}^0 \right)\geq d\}
\end{align}
is the preimage of 
\begin{align}\label{schubert}
	\mathcal{U}_d=\{\Lambda\in Gr(2^{m-1},V^*)\hspace{.1cm}|\hspace{.1cm}
	\dim\left( \varphi(\mathfrak{h})\cap\Lambda \right)\geq d\}
\end{align}
under the regular map $\mathbb{P}(V_j)\to Gr(2^{m-1},V^*)$ defined by $\ell\mapsto\Sigma_{j,\ell}^0$, and hence is closed since \eqref{schubert} is a (closed) Schubert variety (see \cite{eisenbud20163264,harris2013algebraic,pucek2022extremal}).
\end{rem}
\begin{proof}
On an open non-empty subset of $\mathbb{P}(V_j)$ we have
\begin{align}\label{real-cpx identity curves}
1=
\dim \left( \varphi \circ \psi_j(\ell) \otimes \mathbb{C} \right)&=
\dim \left( (\varphi(\mathfrak{h})\cap\Sigma_{j,\ell}^0)\otimes\mathbb{C} \right)=
\dim \left( \varphi(\mathfrak{h})\otimes\mathbb{C}\cap\mathbb{C}\Sigma_{j,\ell\otimes\mathbb{C}}^0 \right)\nonumber\\
&=
\dim \left( \varphi^\mathbb{C}(\mathfrak{h}\otimes\mathbb{C})\cap\mathbb{C}\Sigma_{j,\ell\otimes\mathbb{C}}^0 \right).
\end{align}
If we define
\begin{align}\label{closed1}
Q_d=
\{L\in\mathbb{P}(V_j\otimes\mathbb{C})\hspace{.1cm}|\hspace{.1cm}
\dim\left( \varphi^\mathbb{C}(\mathfrak{h}\otimes\mathbb{C})\cap\mathbb{C}\Sigma_{j,L}^0 \right) \geq d\},
\hspace{.5cm}
d=1,2,
\end{align}
then \eqref{real-cpx identity curves} shows that for $\varphi$ a real factorization structure, $Q_1$ contains $\infty$-many points. Since $Q_1$ is closed, we have $Q_1 = \mathbb{P}(V_j \otimes \mathbb{C})$. Furthermore,
\begin{align}
Q_1 \backslash Q_2 =
\{L\in\mathbb{P}(V_j\otimes\mathbb{C})\hspace{.1cm}|\hspace{.1cm}
\dim\left( \varphi^\mathbb{C}(\mathfrak{h}\otimes\mathbb{C})\cap\mathbb{C}\Sigma_{j,L}^0 \right) = 1\}
\end{align}
is non-empty by \eqref{real-cpx identity curves}, and open since $Q_2$ is closed. Thus, $\varphi^\mathbb{C}$ is a complex factorization structure.\par
On the other hand, for $\varphi^\mathbb{C}$ a complex factorization structure, $Q_1 \backslash Q_2$ is open and non-empty, and thus intersects 
$\{\ell\otimes\mathbb{C}\in\mathbb{P}(V_j\otimes\mathbb{C})\hspace{.1cm}|\hspace{.1cm}\ell\in\mathbb{P}(V_j)\}$
in $\infty$-many points. Equalities \eqref{real-cpx identity curves} at these intersection points show that the closed set
\begin{align}\label{real part}
\{\ell\in\mathbb{P}(V_j)\hspace{.1cm}|\hspace{.1cm}
\dim
\left(
\varphi(\mathfrak{h})\cap\Sigma_{j,\ell_k}^0
\right)
\geq 1\}
\end{align}
is infinite, and thus is the whole $\mathbb{P}(V_j)$. They also show that the open subset where the dimension is 1 is non-empty, which shows that $\varphi$ is a real factorization structure.
\end{proof}

We remark on complexified factorization curves. Unravelling the definition of complexification shows that complexifying the curve
$\psi_j: \mathbb{P}(V_j) \to \mathbb{P}(\mathfrak{h})$,
$$[x^1:x^2] \mapsto \psi_j([x^1:x^2])=\left[ f_1([x^1:x^2]):\cdots:f_{m+1}([x^1:x^2]) \right],$$
means to regard its defining polynomials $f_r$ as complex polynomials, i.e., the complexified curve
$\mathcal{C}: \mathbb{P}(V_j \otimes \mathbb{C}) \to \mathbb{P}(\mathfrak{h} \otimes \mathbb{C})$ is
$$[z^1:z^2] \mapsto \left[ f_1([z^1:z^2]):\cdots:f_{m+1}([z^1:z^2]) \right].$$
Using \Cref{alg geom curves unique} we conclude that $\mathcal{C}$ and the factorization curve $\psi_j^\mathbb{C}$,
given by
$$\varphi^\mathbb{C} \circ \psi_j^\mathbb{C}(L) =
\varphi(\mathfrak{h})\otimes\mathbb{C}\cap\mathbb{C}\Sigma_{j,L}^0,$$
coincide, since by \eqref{real-cpx identity curves} they agree at $\infty$-many points.

\subsection{Degree}\label{degree fc}

Most claims of this subsection hold for projective curves in general, but we stay focused on factorization curves as defined in \Cref{curves defn}. The tautological section
$\tau: \mathbb{P}(\mathfrak{h})\to \mathcal{O}_\mathfrak{h}(1) \otimes \mathfrak{h}$ assigns to each class $[z]\in\mathbb{P}(\mathfrak{h})$ the canonical inclusion of the corresponding 1-dimensional vector space $\langle z \rangle$ into $\mathfrak{h}$ viewed as an element of $\langle z \rangle^*\otimes \mathfrak{h}$, where $\mathcal{O}_\mathfrak{h}(1)$ is the the dual of the tautological line bundle. By pulling $\tau$ back via a factorization curve $\psi_j$ we can view the curve as a section of $\mathcal{O}_{V_j}(e_j)\otimes\mathfrak{h}$,
\begin{equation}\label{curve pullback}
\begin{tikzcd}
\mathcal{O}_{V_j}(e_j)\otimes\mathfrak{h} \arrow[r] & \mathcal{O}_\mathfrak{h}(1)\otimes\mathfrak{h} \\
\mathbb{P}(V_j) \arrow[r, "\psi_j"] \arrow[u, "\psi_j^*\tau"]       & \mathbb{P}(\mathfrak{h}) \arrow[u, "\tau"'],  
\end{tikzcd}
\end{equation}
where $e_j\in\mathbb{Z}$ is determined via the isomorphism
$(\psi_j)^*\mathcal{O}_{\mathfrak{h}}(1)
\cong
\mathcal{O}_{V_j}(e_j)$, using the classification of line bundles over projective spaces. On the other hand, choosing a basis for $\mathfrak{h}$ allows us to view the section $\psi_j^*\tau$ as $\dim \mathfrak{h}$ global sections of $\mathcal{O}_{V_j}(e_j)$. Such sections are homogeneous polynomials of degree $e_j$ which together recover $\psi_j$.\\

\begin{defn}\label[defn]{degree defn}
Let $\mathfrak{h}$ be a real/complex factorization structure. The degree $\deg\psi_j$ of a factorization curve $\psi_j$ is defined to be $\deg\psi_j=e_j$, where $e_j$ is such that $(\psi_j)^*\mathcal{O}_{\mathfrak{h}}(1)
\cong
\mathcal{O}_{V_j}(e_j)$.
\end{defn}

One can consult \Cref{segre lines example} - \Cref{SVfs curves} for examples of degrees. In these examples we used a notion of degree intuitively, which, as we now see, agrees with the one from \Cref{degree defn}.

\begin{rem}\label[rem]{complexified degree}
Since the complexified factorization curve $\psi_j^\mathbb{C}$ and the real factorization curve $\psi_j$ are given by the same polynomial expressions, their degrees agree, i.e., $\deg \psi_j^\mathbb{C} = \deg \psi_j$.
\end{rem}

\begin{rem}
Over the complex numbers, the degree of a curve $\psi_j$ is the same as the number of points, counted with multiplicities, of the intersection of $\text{Im }\psi_j$ with a generic hyperplane. Moreover, since the condition for a hyperplane to be tangent to the curve is closed, a generic hyperplane intersects $\text{Im }\psi_j$ in exactly $\deg \psi_j$ points, each with multiplicity one.
\end{rem}

\subsection{Decomposability}

\begin{defn}\label[defn]{equivalent}
Two factorization curves $\psi_i$ and $\psi_j$ are \textit{equivalent}, $\psi_i\sim\psi_j$, if they have the same image. A factorization curve $\psi_j$ is called \textit{decomposable} if its equivalence class has cardinality $\deg\psi_j$.
\end{defn}

We illustrate this definition on several examples.
Clearly, factorization curves of degree 1, i.e., factorization lines, are decomposable.
Therefore, every factorization curve of an $m$-dimensional Segre factorization structure is decomposable (see \Cref{segre lines example}). \par
In \eqref{Veronese curve} and below \Cref{segre lines example} we learnt that all $m$ factorization curves of the Veronese factorization structure $S^mW^* \hookrightarrow (W^*)^{\otimes m}$ coincide, and that its degree is $m$.
Therefore, each factorization curve $\psi_1,\ldots,\psi_m$ is decomposable, being $\psi_1(\ell) = \cdots = \psi_m(\ell) = (\ell^0)^{\otimes m}$.
Note that the values of this curve are genuine decomposable tensors, which is precisely the motivation behind the definition of a decomposable factorization curve. \par
More generally, one can observe directly in \Cref{SVfs curves} that factorization curves of a standard Segre-Veronese factorization structure are decomposable. Note that in this case, decomposability means that the variable part of such a curve, say $\ell \mapsto ins_i \left( (\ell^0)^{\otimes d_i} \otimes \Gamma_i \right)$, is a decomposable tensors, although $\Gamma_i$ does not have to be. \\

To characterise decomposability via decomposable tensors as above we make use of

\begin{lemma}\label[lemma]{decomp help}
Let $\varphi:\mathfrak{h}\to V^*$ be a complex factorization structure. The following are equivalent.
\begin{enumerate}
\item[(i)] Factorization curves $\psi_i$ and $\psi_j$ are equivalent.
\item[(ii)] There exists an invertible projective transformation $\Phi_{ji}:\mathbb{P}(V_i)\to\mathbb{P}(V_j)$ such that $\psi_i(\ell)=\psi_j(\Phi_{ji}(\ell))$.
\end{enumerate}
\end{lemma}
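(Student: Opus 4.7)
\medskip

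\textbf{Proof plan.} The direction (ii) $\Rightarrow$ (i) is immediate: if $\psi_i = \psi_j \circ \Phi_{ji}$ with $\Phi_{ji}$ a bijection, then $\mathrm{Im}\,\psi_i = \psi_j(\Phi_{ji}(\mathbb{P}(V_i))) = \mathrm{Im}\,\psi_j$. So the content of the lemma is (i) $\Rightarrow$ (ii), which I plan to establish by first constructing $\Phi_{ji}$ as a set map using injectivity of complex factorization curves, then promoting it to a morphism of projective lines via Proposition \ref{ell_k^0 fixed}, and finally observing that an injective morphism $\mathbb{P}^1 \to \mathbb{P}^1$ must be a projective transformation.

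Assume $\mathrm{Im}\,\psi_i = \mathrm{Im}\,\psi_j$. By Corollary \ref{curves are inj}, $\psi_j$ is injective, so for each $\ell' \in \mathbb{P}(V_i)$ there is a unique $\ell \in \mathbb{P}(V_j)$ with $\psi_j(\ell) = \psi_i(\ell')$; set $\Phi_{ji}(\ell') := \ell$. By construction $\psi_i = \psi_j \circ \Phi_{ji}$, and $\Phi_{ji}$ is a set-theoretic bijection (by symmetry and injectivity of $\psi_i$).

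The central step is to show $\Phi_{ji}$ is a morphism. Here I invoke Proposition \ref{ell_k^0 fixed} applied to $\psi_j$: for every $\ell \in \mathbb{P}(V_j)$, $\varphi(\psi_j(\ell)) = \ell^0 \otimes T(\ell)$ with $\ell^0$ at the $j$-th slot. Substituting $\ell = \Phi_{ji}(\ell')$ gives
\begin{equation*}
\varphi(\psi_i(\ell')) \;=\; \Phi_{ji}(\ell')^0 \otimes T(\Phi_{ji}(\ell')),
\end{equation*}
so the tensor $\varphi(\psi_i(\ell'))$ is decomposable at the $j$-th slot and its $V_j^*$-factor is exactly $\Phi_{ji}(\ell')^0$ up to scalar. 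To extract this factor regularly, I pick any $\alpha \in \hat{V}_j$ for which $\langle T(\Phi_{ji}(\ell')), \alpha\rangle$ does not vanish identically in $\ell'$; then the composition $\ell' \mapsto \rho_{j,\alpha}(\varphi(\psi_i(\ell')))$ from \eqref{contraction defn} (with the roles of $V_j^*$ and $V_j$ swapped in the obvious way) equals $\langle T(\Phi_{ji}(\ell')),\alpha\rangle \,\Phi_{ji}(\ell')^0$, which is a morphism $\mathbb{P}(V_i) \to V_j^*$ given by homogeneous polynomials of the same degree in homogeneous coordinates on $\mathbb{P}(V_i)$. Passing to $\mathbb{P}(V_j^*)$ and identifying with $\mathbb{P}(V_j)$ via the annihilator yields $\Phi_{ji}$ as a regular map on an open subset of $\mathbb{P}(V_i)$; Lemma \ref{curve extension} extends it to a morphism on all of $\mathbb{P}(V_i)$.

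Finally, $\Phi_{ji}\colon\mathbb{P}(V_i)\to\mathbb{P}(V_j)$ is a morphism of $\mathbb{P}^1$'s that is bijective on closed points. Over $\mathbb{C}$ this forces the morphism to have degree $1$, and any degree-one morphism $\mathbb{P}^1 \to \mathbb{P}^1$ is induced by a linear isomorphism $V_i \to V_j$, i.e., is a projective transformation. The main obstacle in this plan is the regularity step: I must ensure that the $V_j^*$-direction of the decomposable tensor $\varphi(\psi_i(\ell'))$ can be extracted by a globally defined polynomial procedure. The trick above, choosing a single generic $\alpha$ and relying on Lemma \ref{curve extension} to remove the indeterminacy, handles this cleanly; everything else is bookkeeping.
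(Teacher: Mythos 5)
Your proof is correct, but it takes a genuinely different route from the paper's. The paper proves (i) $\Rightarrow$ (ii) abstractly: it shows each factorization curve is birational onto its image by a function-field argument (the generic fibre of $\psi_r$ is a singleton, so $\psi_r^*:\mathbb{C}(\im\psi_r)\hookrightarrow\mathbb{C}(\mathbb{P}(V_r))$ is a degree-one extension, hence an isomorphism), obtains $\Phi_{ji}$ and $\Phi_{ij}$ as extensions of the mutually inverse dominant rational maps $\Psi_j\circ\psi_i$ and $\Psi_i\circ\psi_j$, and concludes via \Cref{alg geom curves unique} that they are inverse biregular maps. You instead exploit the concrete slot structure from \Cref{ell_k^0 fixed} to read off $\Phi_{ji}(\ell')^0$ as the $j$-th tensor factor of $\varphi\circ\psi_i(\ell')$, extracting it by a generic contraction so that $\Phi_{ji}$ is manifestly given by homogeneous polynomials; injectivity then forces degree one. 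Your approach is more explicit and even produces a formula for $\Phi_{ji}$, at the cost of two small points you should spell out: (a) after invoking \Cref{curve extension}, you must note that the extended morphism still satisfies $\psi_i=\psi_j\circ\Phi_{ji}$ and remains injective (both morphisms agree on a dense open set, hence everywhere, e.g.\ by \Cref{alg geom curves unique}, and injectivity of $\Phi_{ji}$ then follows from that of $\psi_i$); and (b) the existence of an $\alpha$ for which the contraction is not identically zero, which holds because $\varphi\circ\psi_i(\ell')$ is nowhere zero, so nonvanishing is an open nonempty condition on $(\alpha,\ell')$. Neither is a real obstacle.
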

\begin{proof}
Clearly, $(ii)$ implies $(i)$. For the other implication we show that factorization curves are birational onto their images which makes $\psi_j^{-1}\circ\psi_i$ into a birational map $\mathbb{P}(V_i) \dashrightarrow \mathbb{P}(V_j)$. This uniquely extends to a biregular map between projective lines, and thus must be an invertible projective transformation.\par
Let $r\in\{i,j\}$. A factorization curve $\psi_r$ is in particular an injective regular map, and hence a dominant rational map onto its image which is a projective variety. Therefore, it induces an inclusion of function fields
$\psi_r^*: \mathbb{C}(\text{Im }\psi_r) \xhookrightarrow[]{} \mathbb{C}(\mathbb{P}(V_r))$.
Additionally, since the generic fibre of $\psi_r$ is finite, being a singleton, $\psi_r^*$ expresses $\mathbb{C}(\mathbb{P}(V_r))$ as a finite degree extension of $\mathbb{C}(\text{Im }\psi_r)$, the degree being the size of a generic fibre (\cite{harris2013algebraic}). Thus, since $\mathbb{C}(\mathbb{P}(V_r))=\mathbb{C}[x]$, $\psi_r^*$ is an isomorphism, and its inverse yields a dominant rational map $\Psi_r: \text{Im }\psi_r \dashrightarrow \mathbb{P}(V_r)$, the rational inverse to $\psi_r$.\par
Since $\psi_i$ and $\psi_j$ are equivalent, we have well defined dominant rational map
$\Psi_j \circ \psi_i: \mathbb{P}(V_i) \dashrightarrow \mathbb{P}(V_j)$ with the dominant rational inverse $\Psi_i \circ \psi_j$. By \Cref{curve extension}, both extend to regular maps $\Phi_{ji}$ and $\Phi_{ij}$, respectively. Furthermore, the construction implies that $\Phi_{ij} \circ \Phi_{ji}$ agrees with the identity $\text{id}_{\mathbb{P}(V_i)}$ on an open dense subset, and by \Cref{alg geom curves unique}
$\Phi_{ij} \circ \Phi_{ji} = \text{id}_{\mathbb{P}(V_i)}$. Similarly,
$\Phi_{ji} \circ \Phi_{ij} = \text{id}_{\mathbb{P}(V_j)}$. Thus, $\Phi_{ij}$ as well as $\Phi_{ji}$ are invertible regular maps, i.e., projective transformations.
\end{proof}

The following statement confirms our intuition behind decomposability of a curve  from the above examples.

\begin{thm}\label{dec char}
Let $\varphi:\mathfrak{h}\to V^*$ be a complex factorization structure of dimension $m$.
A factorization curve $\psi_j$ is decomposable if and only if there exists
$S \subset \{1,\ldots,m\}$ of cardinality $\deg \psi_j$ and a 1-dimensional subspace
$\Gamma_j \subset \bigotimes_{\substack{r=1\\r\notin S}}^m V_r^*$
such that for each $r\in S$ there exists invertible projective transformation $g_r\in\text{Hom}(\mathbb{P}(V_j^*),\mathbb{P}(V_r^*))$ such that for all $\ell\in\mathbb{P}(V_j)$ the tensor $\varphi\circ\psi_j(\ell)$ has $g_r\ell^0$ in the $r$th slot, and $\Gamma_j$ elsewhere. Clearly, $j\in S$ and $g_j=\text{id}_{\mathbb{P}(V_j^*)}$.
\end{thm}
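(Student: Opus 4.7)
The plan is to prove both implications separately, leveraging \Cref{decomp help} to convert equivalence of factorization curves into isomorphisms of $\mathbb{P}^1$ and \Cref{ell_k^0 fixed} to identify the tensorial content in a given slot. The natural candidate for $S$ is the index set of the equivalence class of $\psi_j$, namely $S := \{i : \psi_i \sim \psi_j\}$, which has cardinality $\deg \psi_j$ by the decomposability assumption and contains $j$ trivially.

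For the forward direction, I would, for each $r \in S$, invoke \Cref{decomp help} to obtain an invertible projective transformation $\Phi_{rj}: \mathbb{P}(V_j) \to \mathbb{P}(V_r)$ with $\psi_j = \psi_r \circ \Phi_{rj}$, and combine this with \Cref{ell_k^0 fixed} applied to $\psi_r$ to conclude that $\varphi \circ \psi_j(\ell)$ has $g_r \ell^0$ in slot $r$, where $g_r$ is the projective transformation of dual lines induced by $\Phi_{rj}$ (so $g_j = \mathrm{id}$ when $r = j$). Iteratively factoring these linear slot-entries out of the tensor gives a decomposition $\varphi \circ \psi_j(\ell) = \bigotimes_{r \in S} (g_r \ell^0) \otimes K(\ell)$ for some regular map $K : \mathbb{P}(V_j) \to \mathbb{P}\bigl(\bigotimes_{r \notin S} V_r^*\bigr)$. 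The crucial step is showing $K$ is constant: the ``Veronese'' piece $\ell \mapsto \bigotimes_{r \in S} g_r \ell^0$ is a projectively reparameterised rational normal curve of degree $|S|$, and additivity of degree under the Segre embedding forces $\deg K = \deg \psi_j - |S| = 0$. The (constant) image of $K$ is then the required $\Gamma_j$.

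For the reverse direction, I would, for each $r \in S$, take $\Phi_{rj}$ to be the invertible projective transformation dual to $g_r$ and observe that the hypothesised tensor form places $\varphi \circ \psi_j(\ell)$ inside $\Sigma_{r,\Phi_{rj}(\ell)}^0 \cap \varphi(\mathfrak{h})$, which generically equals the $1$-dimensional $\varphi \circ \psi_r(\Phi_{rj}(\ell))$; matching dimensions forces equality on a dense open set, hence everywhere by regularity, so $\psi_r \sim \psi_j$. Conversely, if $\psi_i \sim \psi_j$ for some $i$, then by \Cref{decomp help} and \Cref{ell_k^0 fixed} the slot $i$ of $\varphi \circ \psi_j(\ell)$ equals $(\Phi_{ij}(\ell))^0$, which genuinely varies with $\ell$; this contradicts the hypothesis that slot $i$ is a fixed slot of $\Gamma_j$ whenever $i \notin S$. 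Thus the equivalence class of $\psi_j$ is exactly $S$, of cardinality $\deg \psi_j$, establishing decomposability.

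The main obstacle is the degree count that forces $K$ to be constant. The careful argument uses that $\ell \mapsto \bigotimes_{r \in S} g_r \ell^0$ is a degree-$|S|$ curve (as the composition of the Veronese map of $\mathbb{P}(V_j)$ with the linear isomorphism $\bigotimes_{r \in S} g_r$), and that the pullback of $\mathcal{O}(1)$ splits additively through the Segre embedding of two factors, yielding $\deg(\varphi \circ \psi_j) = |S| + \deg K$ and hence $\deg K = 0$.
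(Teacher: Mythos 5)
Your proposal is correct and takes essentially the same route as the paper's proof: \Cref{decomp help} together with \Cref{ell_k^0 fixed} gives the slot-wise form $g_r\ell^0$ in the forward direction, and the residual factor is forced to be constant by the degree count (which the paper asserts in one line and you justify via additivity of degree under the Segre product). Your fleshed-out version of the converse, which the paper dismisses as obvious, is also valid.
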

\begin{rem}\label[rem]{up to permutation of slots}
If $S$ consists of the first $r_0$ indices, $1\leq r_0\leq m$, then we can write
\begin{align}\label{dec char eq}
\varphi\circ\psi_j(\ell)=
\left( \bigotimes_{r=1}^{r_0}
g_r\ell^0 \right)
\otimes
\Gamma_j,
\end{align}
whose linear span is $((\otimes_{r=1}^{r_0} g_r) \cdot S^{r_0} V_j^*) \otimes \Gamma_j$, where $\cdot$ represent the action of the operator $\otimes_{r=1}^{r_0} g_r$.
The general case differs from \eqref{dec char eq} by permutation of slots.
\end{rem}
\begin{proof}
The if part is obvious. For the other implication note that since $\psi_j$ is decomposable, it is $\sim$-equivalent with $\deg \psi_j$ curves indexed by $S \subset \{1,\ldots,m\}$. \Cref{decomp help} gives the existence of invertible projective transformations $G_r$, $r\in S$, of projective lines satisfying $\psi_j(\ell)=\psi_r(G_r(\ell))$. Since
$(G_r^t)^{-1}\ell^0 = (G_r\ell)^0$ we define $g_r=(G_r^t)^{-1}$, where $\cdot^t$ is the transpose. Finally, \Cref{ell_k^0 fixed} shows that $g_r\ell^0$ is at the $r$th slot of $\varphi\circ\psi_j(\ell)$ for each $\ell$, and gives the existence of $\Gamma_j$. Note, $\Gamma_j$ cannot depend on $\ell$ as it would contradict the degree. This proves the claim.
\end{proof}

Decomposability is preserved under complexification.

\begin{thm}\label{real decomp}
A factorization curve in a real factorization structure is decomposable if and only if its complexification in the complexified factorization structure is decomposable. Furthermore, the obvious real counterpart of the characterisation from \Cref{dec char} holds for real decomposable curves, as well.
\end{thm}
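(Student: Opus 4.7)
The plan is to transfer the equivalence relation from \Cref{equivalent} between the real and complexified settings. Throughout, I use that $\psi_j$ and $\psi_j^\mathbb{C}$ are defined by the same polynomials, so $\deg\psi_j^\mathbb{C}=\deg\psi_j$ by \Cref{complexified degree}. The statement will then follow by combining these observations with \Cref{dec char} applied to $\varphi^\mathbb{C}$.

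First I would prove the key auxiliary claim: for any pair of factorization curves in $\varphi$, one has $\psi_i\sim\psi_j$ if and only if $\psi_i^\mathbb{C}\sim\psi_j^\mathbb{C}$. For the forward direction, the coinciding real images contain infinitely many points, while $\im\psi_i^\mathbb{C}$ and $\im\psi_j^\mathbb{C}$ are irreducible complex projective curves in $\mathbb{P}(\mathfrak{h}\otimes\mathbb{C})$ containing this infinite set; they must then coincide by the Zariski-density argument of \Cref{alg geom curves unique}. For the converse, apply \Cref{decomp help} to $\varphi^\mathbb{C}$ to obtain an invertible complex projective transformation $\Phi_{ji}^\mathbb{C}:\mathbb{P}(V_i\otimes\mathbb{C})\to\mathbb{P}(V_j\otimes\mathbb{C})$ with $\psi_i^\mathbb{C}=\psi_j^\mathbb{C}\circ\Phi_{ji}^\mathbb{C}$. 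Inspecting the proof of \Cref{decomp help}, $\Phi_{ji}^\mathbb{C}$ is the extension of $\Psi_j^\mathbb{C}\circ\psi_i^\mathbb{C}$, where $\Psi_j^\mathbb{C}$ is the birational inverse of $\psi_j^\mathbb{C}$. Since both $\psi_i^\mathbb{C}$ and $\psi_j^\mathbb{C}$ are given by polynomials with real coefficients, so are $\Psi_j^\mathbb{C}$ and its extension $\Phi_{ji}^\mathbb{C}$. Thus $\Phi_{ji}^\mathbb{C}$ restricts to a real projective transformation $\Phi_{ji}:\mathbb{P}(V_i)\to\mathbb{P}(V_j)$, and specialising $\psi_i^\mathbb{C}=\psi_j^\mathbb{C}\circ\Phi_{ji}^\mathbb{C}$ to real points gives $\psi_i=\psi_j\circ\Phi_{ji}$, so $\psi_i\sim\psi_j$.

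With this in hand, the $\sim$-class of $\psi_j$ and that of $\psi_j^\mathbb{C}$ have the same cardinality, and since the degrees also coincide, $\psi_j$ is decomposable if and only if $\psi_j^\mathbb{C}$ is. For the real counterpart of \Cref{dec char}, assume $\psi_j$ is decomposable and apply \Cref{dec char} to $\psi_j^\mathbb{C}$ to obtain
\begin{align*}
\varphi^\mathbb{C}\circ\psi_j^\mathbb{C}(L)=\Bigl(\bigotimes_{r\in S}g_r L^0\Bigr)\otimes\Gamma_j
\end{align*}
with complex projective transformations $g_r$ and a complex $1$-dimensional subspace $\Gamma_j$. The real projective transformations $G_r:\mathbb{P}(V_j)\to\mathbb{P}(V_r)$ produced by the converse direction above satisfy $\psi_j=\psi_r\circ G_r$, and the argument in the proof of \Cref{dec char} forces $g_r=(G_r^t)^{-1}$, which is real. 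Restricting the display to real $\ell$, the left-hand side and $\bigotimes_{r\in S}g_r\ell^0$ are both real, hence so is $\Gamma_j$, giving the real version of the characterisation.

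The main obstacle is the converse half of the $\sim$-equivalence: descending the complex transformation $\Phi_{ji}^\mathbb{C}$ provided by \Cref{decomp help} to a real one. This is handled by tracking that birational inverses and extensions of maps defined by real polynomials remain defined by real polynomials, which in turn follows from the explicit polynomial constructions used in \Cref{curve extension} and in the function-field step of \Cref{decomp help}.
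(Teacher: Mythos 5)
Your proof is correct, and its forward direction (real equivalence implies complex equivalence via infinitely many coinciding points and \Cref{alg geom curves unique}, plus preservation of degree) is exactly the paper's. Where you diverge is the converse: the paper does not descend the complex projective reparametrisations at all; instead it applies \Cref{dec char} to the decomposable curve $\psi_j^\mathbb{C}$, evaluates at real points $\ell\otimes\mathbb{C}$, and reads off directly from the resulting tensor expression $g_1\ell^0\otimes\cdots\otimes g_d\ell^0\otimes\langle t\rangle\otimes\mathbb{C}$ that $\psi_j$ is equivalent to $d$ real curves, obtaining the real characterisation as a byproduct. Your route — first upgrading the auxiliary claim to a full two-way equivalence ($\psi_i\sim\psi_j$ iff $\psi_i^\mathbb{C}\sim\psi_j^\mathbb{C}$) by descending $\Phi_{ji}^\mathbb{C}$ to a real transformation — buys a cleaner logical structure (exact equality of the cardinalities of the two equivalence classes, so no implicit appeal to the class size being bounded by the degree), at the cost of the descent step, which is the one place your justification is thin: ``birational inverses of maps defined by real polynomials are defined by real polynomials'' is true but does not really follow from the function-field step of \Cref{decomp help}, which is abstract. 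The clean way to close it is a conjugation argument: for real $\ell$ the point $\psi_i^\mathbb{C}(\ell\otimes\mathbb{C})=\psi_i(\ell)\otimes\mathbb{C}$ is fixed by conjugation, and since $\psi_j^\mathbb{C}$ is injective (\Cref{curves are inj}) and commutes with conjugation, its unique preimage $\Phi_{ji}^\mathbb{C}(\ell\otimes\mathbb{C})$ is also conjugation-fixed, i.e.\ real; a projective transformation of $\mathbb{P}^1$ sending infinitely many real points to real points is real. With that patch your argument is complete and yields the same conclusion as the paper's.
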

\begin{proof}
Let $\psi_i$ and $\psi_j$ be equivalent factorization curves in a real factorization structure. Using equalities in \eqref{real-cpx identity curves} we find that for any $\ell\in\mathbb{P}(V_i)$ there exists $\ell'\in\mathbb{P}(V_j)$ such that
\begin{align}
\psi_i^\mathbb{C}(\ell \otimes \mathbb{C})
=
\psi_i(\ell) \otimes \mathbb{C}
=
\psi_j(\ell') \otimes \mathbb{C}
=
\psi_j^\mathbb{C}(\ell' \otimes \mathbb{C}).
\end{align}
Since factorization curves are injective, $\psi_i^\mathbb{C}$ and $\psi_j^\mathbb{C}$ coincide in $\infty$-many points, and hence \Cref{alg geom curves unique} implies they are equivalent. Furthermore, since degree is preserved in a complexification (see \Cref{complexified degree}), we conclude that the complexification of a decomposable curve is a decomposable curve.\par
Suppose now $\psi_j^\mathbb{C}$ is decomposable and of degree $d$. Using \Cref{dec char} we find
\begin{gather}\label{expr}
\varphi\circ\psi_j(\ell)\otimes\mathbb{C}=
\varphi^\mathbb{C}\circ\psi_j^\mathbb{C}(\ell\otimes\mathbb{C})=
g_1\ell^0 \otimes \cdots \otimes g_d\ell^0 \otimes \langle t \rangle \otimes \mathbb{C},
\end{gather}
up to a permutation of slots as in \Cref{up to permutation of slots}. The expression \eqref{expr} clearly shows that $\psi_j$ is equivalent with $d$ curves, and hence decomposable.
\end{proof}

\begin{corollary}\label[corollary]{fc rnc}
Let $\psi_j$ be a decomposable factorization curve, and $1\leq r \leq \deg \psi_j +1$. Then, any $r$ pairwise distinct points on $\psi_j$ are linearly independent.
\end{corollary}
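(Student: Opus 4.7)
My plan is to reduce the statement to the corresponding well-known property of the rational normal curve, which was noted in the text right after Example~\ref{segre lines example}: any $d+1$ pairwise distinct points on a rational normal curve of degree $d$ in $\mathbb{P}(S^d V_j^*)$ are linearly independent.

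The first step is to use \Cref{dec char} (or \Cref{real decomp} in the real case) to give an explicit form for the decomposable factorization curve. Setting $d := \deg \psi_j$ and reordering slots as in \Cref{up to permutation of slots}, we may write
\begin{align*}
\varphi \circ \psi_j(\ell) \;=\; \bigl( g_1 \ell^0 \otimes \cdots \otimes g_d \ell^0 \bigr) \otimes \Gamma_j,
\end{align*}
where $g_1, \ldots, g_d$ are invertible projective transformations and $\Gamma_j$ is a fixed $1$-dimensional subspace of a tensor product of the remaining $V_r^*$. Permutation of slots plays no role in linear independence, so this normal form is harmless.

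Next I observe that the linear span of the image of $\varphi \circ \psi_j$ equals
\begin{align*}
\bigl( (g_1 \otimes \cdots \otimes g_d) \cdot S^d V_j^* \bigr) \otimes \Gamma_j,
\end{align*}
and the operator $(g_1 \otimes \cdots \otimes g_d) \otimes \mathrm{id}_{\Gamma_j}$ is a linear isomorphism from $S^d V_j^* \otimes \Gamma_j$ onto this span. Under this isomorphism, the curve $\ell \mapsto \varphi \circ \psi_j(\ell)$ corresponds to the rational normal curve $\ell \mapsto (\ell^0)^{\otimes d}$ (tensored with a fixed line), since each $g_r$ is an invertible projective transformation and hence sends pairwise distinct points of $\mathbb{P}(V_j)$ to pairwise distinct points of $\mathbb{P}(V_r^*)$.

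Now take $r$ pairwise distinct points $\ell_1, \ldots, \ell_r \in \mathbb{P}(V_j)$ with $r \leq d+1$. Their images under the rational normal curve $\ell \mapsto (\ell^0)^{\otimes d}$ are linearly independent in $S^d V_j^*$, hence their images under the linear isomorphism above are linearly independent in $V^*$. Since $\varphi$ is injective, the preimages $\psi_j(\ell_1), \ldots, \psi_j(\ell_r)$ are linearly independent in $\mathfrak{h}$. The only subtle point — more of a bookkeeping check than a real obstacle — is ensuring that the $g_r$ really do induce a linear isomorphism onto the span and preserve the rational-normal-curve structure pointwise; this is immediate from the factored form above. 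In the real case, \Cref{real decomp} guarantees the same decomposition over $\mathbb{R}$, and the classical fact about the rational normal curve applies verbatim.
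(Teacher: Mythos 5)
Your proposal is correct and follows essentially the same route as the paper: the paper's proof likewise observes that by the normal form \eqref{dec char eq} the curve $\varphi\circ\psi_j$ is, up to an isomorphism, a rational normal curve within its span, then invokes the classical linear-independence property of the rational normal curve and the injectivity of $\varphi$. You have merely made explicit the isomorphism $(g_1\otimes\cdots\otimes g_d)\otimes\mathrm{id}_{\Gamma_j}$ that the paper leaves implicit.
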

\begin{proof}
Since $\varphi \circ \psi_j$ is of the form \eqref{dec char eq}, it is, up to an isomorphism, a rational normal curve within its span. Therefore, any $r$ pairwise distinct points on $\varphi \circ \psi_j$ are linearly independent, and since $\varphi$ is injective, the same is true for $\psi_j$.
\end{proof}

\subsection{Quotient factorization structure}\label{quotient subsection}
To arrive at the main results of this section, we need to prove that a naturally defined quotient of a factorization structure is itself a factorization structure.
Doing so requires working with seemingly weaker structures called weak factorization structures.
These are employed because proving that their quotients are weak factorization structures is a more manageable task.
Subsequently, through an inductive argument, it is shown that weak factorization structures are, indeed, genuine factorization structures, thereby establishing the well-behaved nature of their quotients.\\

Let $\varphi(\mathfrak{h})\subset V^*$ be a factorization structure of dimension $m$, and $v$ a non-zero vector on a generic line $\lambda\in\mathbb{P}(V_i)$ such that
$\dim \left( \varphi(\mathfrak{h}) \cap \Sigma_{i,\lambda}^0 \right) = 1$.
The inclusion of short exact sequences\newline
\begin{equation}\label{quotient}
\begin{tikzcd}
0 \arrow[r] & {\varphi^{-1}(\varphi(\mathfrak{h})\cap\Sigma_{i,\lambda}^0)} \arrow[r] \arrow[d, hook] & \mathfrak{h} \arrow[r, "{P_{i,v}}"] \arrow[d, "\varphi", hook] & {\mathfrak{h}_{i,\lambda}} \arrow[r] \arrow[d, "{\varphi_{i,v}}"] & 0 \\
0 \arrow[r] & {\Sigma_{i,\lambda}^0} \arrow[r]                                                        & V^* \arrow[r, "{\rho_{i,v}}"]                                  & \hat{V}_i^* \arrow[r]                                             & 0
\end{tikzcd}
\end{equation}
defines the inclusion $\varphi_{i,v}:\mathfrak{h}_{i,\lambda}\to\hat{V}_i^*$ of the $m$-dimensional (quotient) vector space $\mathfrak{h}_{i,\lambda}$ into the tensor product of $m-1$ 2-dimensional vector spaces, which will be shown to be a factorization structure; the quotient of $\varphi:\mathfrak{h}\to V^*$ with respect to the choice $i\in\{1,\ldots,m\}$ and $\lambda\in\mathbb{P}(V_i)$.

\begin{rem}
We remark that for non-zero $v,w\in\lambda$, $\varphi_{i,v}$ and $\varphi_{i,w}$ have the same image. Thus, if they were factorization structures, as will be shown by the end of this subsection, they would be isomorphic (see \Cref{the remark}).
\end{rem}

Clearly, the inclusion $\varphi_{i,v}:\mathfrak{h}_{i,\lambda}\to\hat{V}_i^*$ is a factorization structure if and only if the intersections
\begin{gather}
\varphi_{i,v}(\mathfrak{h}_{i,v}) \cap V_1^* \otimes \cdots \otimes V_{j-1}^* \otimes \ell^0 \otimes V_{j+1}^*
\otimes \cdots \otimes V_{i-1}^* \otimes V_{i+1}^* \otimes \cdots \otimes V_m^* = \nonumber\\
\rho_{i,v} \left(\varphi(\mathfrak{h})\right) \cap \rho_{i,v} \Sigma_{j,\ell}^0
\label{intersections 1d?}
\end{gather}
are 1-dimensional for every $j\in\{1,\ldots,m\}\backslash\{i\}$ and generic $\ell\in\mathbb{P}(V_j)$. As stated, it is difficult to prove.
To make it tractable we use the aforementioned new object, weak factorization structure, defined by requiring the intersection in \eqref{wfs def condition} to be at least 1-dimensional instead of being exactly 1-dimensional.

\begin{defn}\label[defn]{wfs def}
A linear inclusion $\varphi:\mathfrak{h}\to V^*$ of an $(m+1)$-dimensional vector space $\mathfrak{h}$ is called a \textit{weak factorization structure} of dimension $m$ if for every $j\in\{1,\ldots,m\}$ and generic $\ell\in\mathbb{P}(V_j)$
\begin{align}\label{wfs defining}
\dim \left( \varphi(\mathfrak{h}) \cap \Sigma_{j,\ell}^0 \right) \geq 1
\end{align}
holds. Isomorphisms are defined in the same way as for factorization structures.
\end{defn}

As the following claim proves, a weak factorization structure satisfies the defining condition \eqref{wfs defining} not only for generic $\ell \in \mathbb{P}(V_j)$, but in fact for all $\ell \in \mathbb{P}(V_j)$.

\begin{lemma}\label[lemma]{d_k open-constant}
Let $\varphi:\mathfrak{h} \to V^*$ be a weak factorization structure.
Then, for every $j\in\{1,\cdots,m\}$, the condition \eqref{wfs defining} holds on the entire $\mathbb{P}(V_j)$, and the map $\mathbb{P}(V_j)\to\mathbb{Z}$, given by $\ell \mapsto \dim \left( \varphi(\mathfrak{h}) \cap \Sigma_{j,\ell}^0 \right)$, attains its minimal value on an open non-empty subset of $\mathbb{P}(V_j)$.
\end{lemma}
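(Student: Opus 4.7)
The plan is to leverage \Cref{Schubert rem}, which tells us that for every $d \geq 1$ the set
\begin{align*}
U_d = \{\ell \in \mathbb{P}(V_j) : \dim \left( \varphi(\mathfrak{h}) \cap \Sigma_{j,\ell}^0 \right) \geq d\}
\end{align*}
is Zariski-closed in $\mathbb{P}(V_j)$, being the preimage of a closed Schubert variety under the regular map $\ell \mapsto \Sigma_{j,\ell}^0$. Since $\mathbb{P}(V_j) \cong \mathbb{P}^1$ is irreducible, any closed subset that contains a non-empty open subset must coincide with the entire space. This observation is the crux of both assertions.

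For the first part, the definition of a weak factorization structure furnishes a non-empty open subset of $\mathbb{P}(V_j)$ on which \eqref{wfs defining} holds, so this subset is contained in $U_1$. Combining irreducibility with the closedness of $U_1$, I conclude $U_1 = \mathbb{P}(V_j)$, i.e.\ \eqref{wfs defining} holds for every $\ell \in \mathbb{P}(V_j)$.

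For the second part, I consider the descending chain of closed subsets
\begin{align*}
\mathbb{P}(V_j) = U_1 \supseteq U_2 \supseteq U_3 \supseteq \cdots,
\end{align*}
which stabilizes at the empty set for $d$ exceeding $\dim \varphi(\mathfrak{h}) = m+1$. Let $d^\ast$ be the largest integer with $U_{d^\ast} = \mathbb{P}(V_j)$; by the first part $d^\ast \geq 1$, and $d^\ast$ is finite. Then $U_{d^\ast + 1}$ is a proper closed subset of $\mathbb{P}(V_j)$, so its complement $W := \mathbb{P}(V_j) \setminus U_{d^\ast + 1}$ is open and non-empty. By construction, every $\ell \in W$ satisfies $\dim \left( \varphi(\mathfrak{h}) \cap \Sigma_{j,\ell}^0 \right) = d^\ast$, while every $\ell \in \mathbb{P}(V_j)$ satisfies $\dim \left( \varphi(\mathfrak{h}) \cap \Sigma_{j,\ell}^0 \right) \geq d^\ast$. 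Hence the dimension function attains its minimum value $d^\ast$ precisely on the open non-empty set $W$.

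There is no real obstacle here beyond correctly interpreting \Cref{Schubert rem}: the only substantive content is the semicontinuity packaged in the closedness of the $U_d$, combined with the irreducibility of $\mathbb{P}^1$ to turn the generic condition in the definition of a weak factorization structure into a universal one.
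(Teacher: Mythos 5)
Your proof is correct and follows essentially the same route as the paper: both rest on the closedness of the sets $U_d$ from \Cref{Schubert rem} together with the irreducibility of $\mathbb{P}(V_j)\cong\mathbb{P}^1$, and differ only in bookkeeping (you extract the largest $d^\ast$ with $U_{d^\ast}=\mathbb{P}(V_j)$ in one step, whereas the paper iterates through the sets $U^d=U_d\setminus U_{d+1}$). The paper also treats $m=2$ separately by citing its classification, but your uniform argument covers that case as well.
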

\begin{proof}
For $m=2$ this was solved directly in \Cref{fs of dim 2}.\newline
Suppose $m\geq3$. Let
\begin{align}
	U_d:=\{\ell\in\mathbb{P}(V_j):\dim\left( \varphi(\mathfrak{h})\cap\Sigma_{j,\ell}^0 \right)\geq d\}
\end{align}
be the closed sets as in \Cref{Schubert rem}.\par
The set $U_1$ is open and non-empty by definition of weak factorization structure, so $U_1=\mathbb{P}(V_k)$, and hence the condition \eqref{wfs defining} holds on the whole $\mathbb{P}(V_j)$ as claimed. Let
\begin{align}
	U^d:=U_d\backslash U_{d+1}=\{\ell\in\mathbb{P}(V_j):|\varphi(\mathfrak{h})\cap\Sigma_{j,\ell}^0|=d\}.
\end{align}
The set $U^1=U_1\backslash U_2=\mathbb{P}(V_j)\backslash U_2$ is open as $U_2$ is closed. Thus, if there exists $\ell\in\mathbb{P}(V_j)$ such that
\begin{align}\label{generic 1}
\dim \left(\varphi(\mathfrak{h})\cap\Sigma_{j,\ell}^0\right) =1,
\end{align}
i.e., $U^1\neq\emptyset$, then \eqref{generic 1} holds generically in $\ell$ as claimed.\par
However, if the set $U^1$ is empty, then $\mathbb{P}(V_j)=U_1=U_2$. Now, similarly as before, if the open set $U^2$ is non-empty, then $\dim \left(\varphi(\mathfrak{h})\cap\Sigma_{j,\ell}^0\right) =2$ generically in $\ell$.\newline
Since $\mathfrak{h}$ is a weak factorization structure this process yields the claim before $d$ exceeds $\dim(\mathfrak{h})=m+1$.
\end{proof}

Now we are ready to investigative quotients of (weak) factorization structures.

\begin{lemma}\label[lemma]{crucial lemma}
Let $\varphi:\mathfrak{h}\to V^*$ be a weak factorization structure of dimension $m$. Then, for every $i\in\{1,\ldots,m\}$ there exists an open non-empty $A_i\subset\mathbb{P}(V_i)$ such that for all $\lambda\in A_i$, every non-zero $v\in\lambda$, and every $j\in\{1,\ldots,m\}\backslash\{i\}$ there is an open non-empty $U_j\subset\mathbb{P}(V_j)$ such that for all $\ell\in U_j$:
\begin{gather}\label{dimension result}
\dim
\left(
\rho_{i,v} \left( \varphi(\mathfrak{h}) \cap \Sigma_{j,\ell}^0 \right)
\right)
\geq 1.
\end{gather}
\end{lemma}
\begin{proof}
By contradiction. Suppose there is $i\in\{1,\ldots,m\}$ such that for any open $A_i\subset\mathbb{P}(V_i)$ there is $\lambda\in A_i$, $0\neq v\in\lambda$ and $j\neq i$ such that for any open $U_j\subset\mathbb{P}(V_j)$ there exists $\ell$ with
\begin{align}\label{contradiction quotient}
\varphi(\mathfrak{h})\cap \Sigma_{j,\ell}^0
\subset
\ker \rho_{i,v}
=
\Sigma_{i,\lambda}^0.
\end{align}
There are two observations to be made. First, for such a fixed $i, A_i, \lambda$ and $j$, by suitably varying $U_j$ we find an infinite set $S_\lambda\subset\mathbb{P}(V_j)$ such that any $\ell \in S_\lambda$ satisfies \eqref{contradiction quotient}.
Secondly, replacing $A_i$ with the open set $A_i\backslash\{\lambda\}$ gives an index $j_0\neq i$, $\bar{\lambda}\in\mathbb{P}(V_i)$ s.t. $\bar{\lambda}\neq\lambda$, and the corresponding infinite set $S_{\bar{\lambda}}\subset\mathbb{P}(V_{j_0})$. Clearly, indices $j$ and $j_0$ might be different, but the freedom in choosing an open non-empty set not containing $\lambda$ allows to find two infinite sets $S_\lambda, S_{\bar{\lambda}}\subset\mathbb{P}(V_j)$ corresponding to distinct $\lambda,\bar{\lambda}\in\mathbb{P}(V_i)$ such that
\begin{align}
\varphi(\mathfrak{h})\cap \Sigma_{j,\ell}^0
\subset
\Sigma_{i,\lambda}^0,
\hspace{.2cm}
\ell \in S_\lambda,
\end{align}
and
\begin{align}
\varphi(\mathfrak{h})\cap \Sigma_{j,\ell}^0
\subset
\Sigma_{i,\bar{\lambda}}^0,
\hspace{.2cm}
\ell \in S_{\bar{\lambda}}.
\end{align}
Let $j$ be as above and $\mathcal{V}\subset\mathbb{P}(V_j)$ be the open non-empty set where $\ell \mapsto \dim \left( \varphi(\mathfrak{h}) \cap \Sigma_{j,\ell}^0 \right)$ attains its minimal value $d$ (see \Cref{d_k open-constant}). Then the closed set (see \Cref{Schubert rem})
\begin{align}\label{preimage}
\left\{
	\ell \in \mathcal{V} \hspace{.1cm}|\hspace{.1cm} \varphi(\mathfrak{h}) \cap \Sigma_{j,\ell}^0 \subset \Sigma_{i,\lambda}^0
\right\}
=
\left\{
	\ell \in \mathcal{V} \hspace{.1cm}|\hspace{.1cm} \dim \left( \varphi(\mathfrak{h}) \cap \Sigma_{j,\ell}^0 \cap \Sigma_{i,\lambda}^0 \right) \geq d
\right\}
\end{align}
contains the set $S_\lambda$, and thus equals to $\mathcal{V}$. Clearly, the same argument works for $\bar{\lambda}$. Thus,
\begin{align}
\varphi(\mathfrak{h}) \cap \Sigma_{j,\ell}^0 \subset \Sigma_{i,\lambda}^0
\hspace{1cm}\text{and}\hspace{1cm}
\varphi(\mathfrak{h}) \cap \Sigma_{j,\ell}^0 \subset \Sigma_{i,\bar{\lambda}}^0
\end{align}
for $\ell\in \mathcal{V}$, i.e.,
$\varphi(\mathfrak{h}) \cap \Sigma_{j,\ell}^0 \subset \Sigma_{i,\lambda}^0  \cap \Sigma_{i,\bar{\lambda}}^0 = 0$
as $\lambda\neq\bar{\lambda}$. Therefore $\dim \left(\varphi(\mathfrak{h}) \cap \Sigma_{j,\ell}^0 \right) =0$ generically, which contradicts $\varphi$ being a weak factorization structure.
\end{proof}

\begin{corollary}\label[corollary]{cor fs}
Let $\varphi:\mathfrak{h}\to V^*$ be a factorization structure of dimension $m$. Then, for every $i\in\{1,\ldots,m\}$ there exists an open non-empty $A_i\subset\mathbb{P}(V_i)$ such that for all $\lambda\in A$ and any $j\in\{1,\ldots,m\}\backslash\{i\}$ there is an open non-empty $\bar{U}_j\subset\mathbb{P}(V_j)$ such that for all $\ell\in \bar{U}_j$
\begin{gather}
\dim \left( \varphi(\mathfrak{h})\cap\Sigma_{j,\ell}^0 \cap \Sigma_{i,\lambda}^0 \right)=0
\end{gather}
\end{corollary}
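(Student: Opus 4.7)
The plan is to deduce the corollary directly from \Cref{crucial lemma} together with the stronger defining condition of a factorization structure (as opposed to a weak one). I would proceed as follows.

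First, observe that every factorization structure is, trivially, a weak factorization structure, so \Cref{crucial lemma} applies: for each $i\in\{1,\ldots,m\}$ we obtain an open non-empty $A_i\subset\mathbb{P}(V_i)$ such that for every $\lambda\in A_i$, every $0\neq v\in\lambda$, and every $j\neq i$, there is an open non-empty $U_j\subset\mathbb{P}(V_j)$ with
\begin{align*}
\dim\bigl(\rho_{i,v}\bigl(\varphi(\mathfrak{h})\cap\Sigma_{j,\ell}^0\bigr)\bigr)\geq 1
\end{align*}
for all $\ell\in U_j$. Next, because $\varphi$ is a genuine factorization structure, there is an open non-empty set $W_j\subset\mathbb{P}(V_j)$ (the one featured in \Cref{fs def}) on which $\dim\bigl(\varphi(\mathfrak{h})\cap\Sigma_{j,\ell}^0\bigr)=1$. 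Set $\bar{U}_j:=U_j\cap W_j$, which is open and non-empty as the intersection of two open non-empty subsets of the irreducible projective line $\mathbb{P}(V_j)$.

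For $\ell\in\bar{U}_j$, the space $\varphi(\mathfrak{h})\cap\Sigma_{j,\ell}^0$ is $1$-dimensional and its image under $\rho_{i,v}$ has dimension at least $1$, hence exactly $1$. Thus the restriction of $\rho_{i,v}$ to $\varphi(\mathfrak{h})\cap\Sigma_{j,\ell}^0$ is injective. Since $\ker\rho_{i,v}=\Sigma_{i,\lambda}^0$, this injectivity is exactly the assertion
\begin{align*}
\varphi(\mathfrak{h})\cap\Sigma_{j,\ell}^0\cap\Sigma_{i,\lambda}^0=0,
\end{align*}
which is the desired conclusion.

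There is no real obstacle here; the whole point is that \Cref{crucial lemma} has already done the analytic work (the nontrivial genericity/closedness argument with Schubert loci and the two distinct points $\lambda,\bar{\lambda}$), while the upgrade from ``weak'' to ``genuine'' factorization structure turns the lower bound $\dim\rho_{i,v}(\cdots)\geq 1$ into equality merely by dimension counting against the $1$-dimensional source $\varphi(\mathfrak{h})\cap\Sigma_{j,\ell}^0$. The only mild subtlety is confirming that $\bar{U}_j$ is non-empty, but this is immediate from the topology of $\mathbb{P}^1$.
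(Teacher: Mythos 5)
Your proof is correct and follows essentially the same route as the paper: apply \Cref{crucial lemma} to get $\dim\bigl(\rho_{i,v}(\varphi(\mathfrak{h})\cap\Sigma_{j,\ell}^0)\bigr)\geq 1$ on $U_j$, intersect with the locus where $\dim\bigl(\varphi(\mathfrak{h})\cap\Sigma_{j,\ell}^0\bigr)=1$, and conclude by rank--nullity (which you phrase equivalently as injectivity of $\rho_{i,v}$ on a $1$-dimensional source with $1$-dimensional image). The only cosmetic difference is that the paper writes out the rank--nullity identity explicitly while you argue via injectivity; both are the same dimension count.
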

\begin{proof}
Rank-nullity theorem together with \Cref{crucial lemma} give an open non-empty $U_j\subset \mathbb{P}(V_j)$ such that for $\ell \in U_j$
\begin{align}
\dim
\left(
\varphi(\mathfrak{h}) \cap \Sigma_{j,\ell}^0
\right)
-
\dim
\left(
\varphi(\mathfrak{h}) \cap \Sigma_{j,\ell}^0 \cap \Sigma_{i,\lambda}^0
\right)
=
\dim
\left(
\rho_{i,v} \left( \varphi(\mathfrak{h}) \cap \Sigma_{j,\ell}^0 \right)
\right)
\geq 1.
\end{align}
Intersecting $U_j$ with the open non-empty set where $\dim \left(\varphi(\mathfrak{h}) \cap \Sigma_{j,\ell}^0 \right) =1$ gives an open non-empty set $\bar{U}_j$ where the claim holds.
\end{proof}

We define the quotient of a weak factorization structure $\varphi: \mathfrak{h} \to V^*$ with respect to $i \in \{1,\ldots,m\}$ and $v \in \lambda$, $\lambda \in \mathbb{P}(V_j)$, by \eqref{quotient} to be the linear inclusion $\varphi_{i,v}: \mathfrak{h}_{i,\lambda} \to \hat{V}_i^*$.

\begin{proposition}\label[proposition]{contracted wfs is wfs}
Let $\varphi:\mathfrak{h}\to V^*$ be a weak factorization structure of dimension $m$, $i \in \{1,\ldots,m\}$, $A_i$ be as in \Cref{crucial lemma}, and $v$ a non-zero vector on $\lambda \in A_i$. Then, the quotient $\varphi_{i,v}$ of a weak factorization structure $\varphi$ with respect to $i$ and $v$ is a weak factorization structure.
\end{proposition}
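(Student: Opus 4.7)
The plan is to reduce the claim to a direct application of Lemma~\ref{crucial lemma} by tracing the contraction $\rho_{i,v}$ through the defining diagram~\eqref{quotient}. Writing $\hat\Sigma_{j,\ell}^0\subset\hat V_i^*$ for the natural analogue of $\Sigma_{j,\ell}^0$ with the $i$-th slot removed, the weak factorization structure condition for $\varphi_{i,v}$ amounts to showing
\begin{align*}
\dim\bigl(\varphi_{i,v}(\mathfrak{h}_{i,\lambda})\cap\hat\Sigma_{j,\ell}^0\bigr)\geq 1
\end{align*}
for every $j\in\{1,\ldots,m\}\setminus\{i\}$ and generic $\ell\in\mathbb{P}(V_j)$. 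Surjectivity of $P_{i,v}$ combined with the commutativity of~\eqref{quotient} immediately gives the identification $\varphi_{i,v}(\mathfrak{h}_{i,\lambda})=\rho_{i,v}(\varphi(\mathfrak{h}))$.

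The key auxiliary identity I would establish is $\rho_{i,v}(\Sigma_{j,\ell}^0)=\hat\Sigma_{j,\ell}^0$. This is most easily seen by noting that the dual map $\rho_{i,v}^{\,*}:\hat V_i\to V$, which inserts $v$ into the $i$-th slot, sends $\hat\Sigma_{j,\ell}$ isomorphically onto $\Sigma_{j,\ell}\cap\Sigma_{i,\lambda}$. Taking annihilators and using $\ker\rho_{i,v}=\Sigma_{i,\lambda}^0$ together with surjectivity of $\rho_{i,v}$ yields the claimed equality. From the elementary inclusion $\rho_{i,v}(A\cap B)\subseteq\rho_{i,v}(A)\cap\rho_{i,v}(B)$ applied to $A=\varphi(\mathfrak{h})$ and $B=\Sigma_{j,\ell}^0$ we obtain
\begin{align*}
\rho_{i,v}\bigl(\varphi(\mathfrak{h})\cap\Sigma_{j,\ell}^0\bigr)\subseteq\varphi_{i,v}(\mathfrak{h}_{i,\lambda})\cap\hat\Sigma_{j,\ell}^0.
\end{align*}

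Finally, Lemma~\ref{crucial lemma} guarantees an open non-empty $U_j\subset\mathbb{P}(V_j)$ on which the dimension of the left-hand side is at least one, and the inclusion above transfers this lower bound to the right-hand side, giving the defining inequality for a weak factorization structure. It remains to verify that $\dim\mathfrak{h}_{i,\lambda}=m$, i.e.\ that $\dim(\varphi(\mathfrak{h})\cap\Sigma_{i,\lambda}^0)=1$; this may require further shrinking $A_i$ by intersecting with the open set from Lemma~\ref{d_k open-constant} on which this minimal dimension is attained. I expect this dimension-count bookkeeping to be the only delicate point of the argument; the core weak factorization inequality follows essentially tautologically from Lemma~\ref{crucial lemma} once the diagrammatic setup and the identity $\rho_{i,v}(\Sigma_{j,\ell}^0)=\hat\Sigma_{j,\ell}^0$ are in place.
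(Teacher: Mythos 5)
Your proof is correct and follows essentially the same route as the paper's: the paper's entire argument is to combine \Cref{crucial lemma} with the set-theoretic inclusion $\rho_{i,v}\left(\varphi(\mathfrak{h})\cap\Sigma_{j,\ell}^0\right)\subset\rho_{i,v}\left(\varphi(\mathfrak{h})\right)\cap\rho_{i,v}\Sigma_{j,\ell}^0$, which is exactly your chain of reasoning (the identification $\rho_{i,v}(\Sigma_{j,\ell}^0)=\hat\Sigma_{j,\ell}^0$ that you take care to verify is simply taken as read in the paper's equation \eqref{intersections 1d?}). The dimension bookkeeping for $\mathfrak{h}_{i,\lambda}$ that you flag at the end is likewise left implicit in the paper, so it is not a discrepancy between your argument and theirs.
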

\begin{proof}
We need to check if the intersection \eqref{intersections 1d?} are at least 1-dimensional generically in every slot. This follows by combining \Cref{crucial lemma} with the set-theoretical inclusion
\begin{gather}\label{function on intersection}
\rho_{i,v} \left( \varphi(\mathfrak{h}) \cap \Sigma_{j,\ell}^0 \right)
\subset
\rho_{i,v} \left(\varphi(\mathfrak{h})\right) \cap \rho_{i,v} \Sigma_{j,\ell}^0.
\end{gather}
\end{proof}

The following sufficient condition for a weak factorization structure to be a factorization structure is used to prove the main theorem of this section.

\begin{lemma}\label[lemma]{induction for qfs}
Let $\varphi:\mathfrak{h}\to V^*$ be a weak factorization structure of dimension $m\geq 3$.
Suppose that for every $i \in \{1,\ldots,m\}$ there exists distinct $\lambda_1^i, \lambda_2^i \in \mathbb{P}(V_i)$ and non-zero $v_1^i \in \lambda_1^i, v_2^i \in \lambda_2^i$ such that the quotient weak factorization structures $\varphi_{i,v_1^i}$ and $\varphi_{i,v_2^i}$ are factorization structures.
Then $\varphi$ is a factorization structure.
\end{lemma}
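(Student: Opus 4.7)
The plan is to verify, for each slot $k\in\{1,\ldots,m\}$, that $\dim(\varphi(\mathfrak{h})\cap\Sigma_{k,\ell}^0)=1$ for generic $\ell\in\mathbb{P}(V_k)$. Since $\varphi$ is a weak factorization structure, \Cref{d_k open-constant} guarantees that the function $\ell\mapsto\dim(\varphi(\mathfrak{h})\cap\Sigma_{k,\ell}^0)$ is at least $1$ at every $\ell\in\mathbb{P}(V_k)$ and attains its minimum on an open non-empty subset. It therefore suffices, for each $k$, to exhibit a single $\lambda\in\mathbb{P}(V_k)$ at which this dimension equals $1$: the minimum will then be $1$ and be attained generically.

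I would produce such a $\lambda$ by invoking the hypothesis with $i=k$: it supplies $\lambda_1^k,\lambda_2^k\in\mathbb{P}(V_k)$ and $v_p^k\in\lambda_p^k$ such that each quotient $\varphi_{k,v_p^k}\colon \mathfrak{h}_{k,\lambda_p^k}\to\hat{V}_k^*$, defined via diagram \eqref{quotient}, is a factorization structure. By \Cref{fs def} applied in dimension $m-1$, the domain $\mathfrak{h}_{k,\lambda_p^k}$ is then $m$-dimensional; together with the injectivity of $\varphi$ and the fact that $\mathfrak{h}$ is $(m+1)$-dimensional, the short exact sequence on the left of diagram \eqref{quotient} forces $\dim(\varphi(\mathfrak{h})\cap\Sigma_{k,\lambda_p^k}^0)=1$. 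This is exactly the point needed; combined with the previous paragraph it gives the factorization structure condition at every slot $k$, so $\varphi$ is a factorization structure.

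I do not expect a serious obstacle: the technical heavy lifting is already packaged in \Cref{d_k open-constant}. The main conceptual point to notice is that this short route uses neither the distinctness of $\lambda_1^i$ and $\lambda_2^i$ nor the full factorization condition on the quotients, only the dimensionality of the quotient vector space, and only at one of the two $\lambda_p^i$. An alternative, more elaborate route that does use both ingredients would instead fix an auxiliary index $i\neq k$, translate the factorization condition on each $\varphi_{i,v_p^i}$ via the isomorphism
\begin{align*}
\rho_{i,v}(\varphi(\mathfrak{h}))\cap\rho_{i,v}(\Sigma_{k,\ell}^0)
\cong
\bigl(\varphi(\mathfrak{h})\cap(\Sigma_{k,\ell}^0+\Sigma_{i,\lambda}^0)\bigr)\big/\bigl(\varphi(\mathfrak{h})\cap\Sigma_{i,\lambda}^0\bigr)
\end{align*}
into the identity $\dim(\varphi(\mathfrak{h})\cap(\Sigma_{k,\ell}^0+\Sigma_{i,\lambda_p^i}^0))=2$ generically, deduce $\dim(\varphi(\mathfrak{h})\cap\Sigma_{k,\ell}^0)\leq 2$, and rule out equality by observing that the fixed two-dimensional space $\bigoplus_{p=1}^{2}(\varphi(\mathfrak{h})\cap\Sigma_{i,\lambda_p^i}^0)$ (which is two-dimensional because $\Sigma_{i,\lambda_1^i}^0\cap\Sigma_{i,\lambda_2^i}^0=0$ for distinct $\lambda_p^i$) would otherwise have to lie inside $\Sigma_{k,\ell}^0$ for all $\ell$, contradicting $\bigcap_\ell\Sigma_{k,\ell}^0=0$.
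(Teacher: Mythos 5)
Your primary argument is correct, and it takes a genuinely different and considerably shorter route than the paper's own proof. The paper works harder: fixing a slot $j$, it chooses an auxiliary index $i \neq j$, uses the factorization condition on $\varphi_{i,v_1^i}$ together with rank–nullity to obtain
$\dim\bigl(\varphi(\mathfrak{h})\cap\Sigma_{j,\ell}^0\bigr) - \dim\bigl(\varphi(\mathfrak{h})\cap\Sigma_{j,\ell}^0\cap\Sigma_{i,\lambda}^0\bigr) \leq 1$,
and then eliminates the triple intersection by introducing yet another slot $q\notin\{i,j\}$ (this is where the hypothesis $m\geq 3$ enters) and using both $\lambda_1^q$ and $\lambda_2^q$ via \Cref{cor fs} to trap it inside $\Sigma_{q,\lambda_1^q}^0\cap\Sigma_{q,\lambda_2^q}^0=0$. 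Your observation bypasses all of this: the mere fact that $\varphi_{k,v_1^k}$ is a factorization structure of dimension $m-1$ forces $\dim\mathfrak{h}_{k,\lambda_1^k}=m$ (by \Cref{fs def}), hence via the exact top row of \eqref{quotient} and injectivity of $\varphi$, $\dim\bigl(\varphi(\mathfrak{h})\cap\Sigma_{k,\lambda_1^k}^0\bigr)=1$. Since $\varphi$ is a weak factorization structure, \Cref{d_k open-constant} says the function $\ell\mapsto\dim\bigl(\varphi(\mathfrak{h})\cap\Sigma_{k,\ell}^0\bigr)$ is everywhere $\geq 1$ and attains its minimum on an open nonempty set; a single point where the value is $1$ therefore pins that minimum to $1$ and the factorization condition at slot $k$ follows. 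This works for any $m\geq 2$, needs only one $\lambda$ per slot, and uses nothing about the quotient beyond the dimension of its domain. Your sketched alternative route is also sound and worth noting because it is closer in spirit to the paper's argument, but it eliminates the triple intersection using the two distinct $\lambda_p^i$ in a single auxiliary slot $i$ together with $\Sigma_{k,\ell}^0\cap\Sigma_{k,\ell'}^0=0$, rather than the paper's two points $\lambda_1^q,\lambda_2^q$ in a third slot $q$ — so it is still a distinct argument. In short: the paper's proof extracts information from the quotients' factorization-curve behaviour across multiple slots; yours extracts only the dimension count packaged in the hypothesis, and \Cref{d_k open-constant} does the rest.
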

\begin{proof}
To ease the notation we proceed with $v=v_1^i$.
Using respectively that $\varphi_{i,v}$ is a factorization structure, the inclusion \eqref{function on intersection}, and the rank-nullity theorem for the contraction $\rho_{i,v}$, we find that for a generic $\ell\in\mathbb{P}(V_j)$, $j\neq i$,
\begin{gather}
1 =
\dim \left( 
		\rho_{i,v} \left(\varphi(\mathfrak{h})\right) \cap \rho_{i,v} \Sigma_{j,\ell}^0
	\right) \geq
\dim \left(
		\rho_{i,v} \left( \varphi (\mathfrak{h})
						\cap
						\Sigma_{j,\ell}^0
					\right)
	 \right)=\\
\dim \left(
		\varphi (\mathfrak{h})
		\cap
		\Sigma_{j,\ell}^0\right)-\label{here}\\
\dim \left(
		\varphi (\mathfrak{h})
		\cap
		\Sigma_{j,\ell}^0
		\cap
		\Sigma_{i,\lambda}^0
	\right).\label{here0}
\end{gather}
Observe that if \eqref{here0} were zero, then $\varphi$ being a weak factorization structure implies that \eqref{here} is 1, and thus proving that $\varphi$ satisfies the defining equation of a factorization structure for $j\neq i$. To show that \eqref{here0} is zero we consider contractions $\rho_{q,v_1^q}$ and $\rho_{q,v_2^q}$ for $q\neq j$ and $q\neq i$. Now, as $\varphi_{q,v_1^q}$ is a factorization structure, \Cref{cor fs} implies that the right hand side of
\begin{align}
\rho_{q,v_1^q} \left( \varphi (\mathfrak{h})
					\cap
					\Sigma_{j,\ell}^0
					\cap
					\Sigma_{i,\lambda}^0
			\right)
\subset
\rho_{q,v_1^q}\varphi (\mathfrak{h})
\cap
\rho_{q,v_1^q}\Sigma_{j,\ell}^0
\cap
\rho_{q,v_1^q}\Sigma_{i,\lambda}^0
\end{align}
is zero for appropriate generic choices of $\lambda$ and $\ell$ as explained in \Cref{cor fs}. Thus, for these values of $\ell$ and $\lambda$,
\begin{align}
\varphi (\mathfrak{h})
\cap
\Sigma_{j,\ell}^0
\cap
\Sigma_{i,\lambda}^0
\subset
\ker \rho_{q,v_1^q} = \Sigma_{q,\lambda_1^q}^0.
\end{align}
Repeating this process with $\varphi_{q,v_2^q}$ yields generic values of $\ell$ and $\lambda$ for which
\begin{align}
\varphi (\mathfrak{h})
\cap
\Sigma_{j,\ell}^0
\cap
\Sigma_{i,\lambda}^0
\subset
\Sigma_{q,\lambda_1^q}^0 \cap \Sigma_{q,\lambda_2^q}^0=0
\end{align}
as required.
We showed that for $j \neq i$ and $j \neq q$, $\dim \left( \varphi(\mathfrak{h}) \cap \Sigma_{j,\ell}^0 \right) = 1$ generically in $\ell$.
To prove the rest one permutes the roles of $i,j$ and $q$.
\end{proof}

\begin{thm}\label{wfs is fs}
Every weak factorization structure is a factorization structure.
\end{thm}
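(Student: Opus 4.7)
The plan is to proceed by induction on the dimension $m$ of the weak factorization structure, reducing to the criterion provided by \Cref{induction for qfs} via the quotient construction of \Cref{contracted wfs is wfs}.

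For the base case $m=2$, \Cref{2d fs lemma} asserts that every linear inclusion of a $3$-dimensional space into $V_1^* \otimes V_2^*$ is automatically a factorization structure. In particular, any weak factorization structure of dimension $2$ is a factorization structure, so there is nothing more to check.

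For the inductive step, suppose $m \geq 3$ and assume that every weak factorization structure of dimension $m-1$ is a factorization structure. Let $\varphi:\mathfrak{h}\to V^*$ be a weak factorization structure of dimension $m$. Fix $i \in \{1,\ldots,m\}$ and let $A_i \subset \mathbb{P}(V_i)$ be the open non-empty subset produced by \Cref{crucial lemma}. By \Cref{contracted wfs is wfs}, for every $\lambda \in A_i$ and every non-zero $v \in \lambda$, the quotient $\varphi_{i,v}:\mathfrak{h}_{i,\lambda}\to \hat V_i^*$ is a weak factorization structure of dimension $m-1$, hence a factorization structure by the inductive hypothesis. Since $A_i$ is open and non-empty in $\mathbb{P}(V_i)$, it is the complement of finitely many points, and therefore contains (infinitely many, and in particular) two distinct lines $\lambda_1^i, \lambda_2^i$. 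Picking non-zero $v_r^i \in \lambda_r^i$, $r=1,2$, we obtain two quotient factorization structures $\varphi_{i,v_1^i}$ and $\varphi_{i,v_2^i}$ as required by the hypothesis of \Cref{induction for qfs}. As $i \in \{1,\ldots,m\}$ was arbitrary, that lemma applies and yields that $\varphi$ itself is a factorization structure, completing the induction.

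The main conceptual obstacle has, in fact, been isolated into the preparatory lemmas: \Cref{crucial lemma} (which controls the generic behaviour of the contraction $\rho_{i,v}$ on the intersections $\varphi(\mathfrak{h})\cap\Sigma_{j,\ell}^0$) and \Cref{induction for qfs} (which upgrades quotient information at two distinct points to a genuine dimension-$1$ estimate on the original intersections). Once these are in hand, the induction is essentially bookkeeping; the only thing to verify is that the freedom to choose two distinct lines $\lambda_1^i, \lambda_2^i$ within the open set $A_i$ is genuinely available, which follows because non-empty Zariski-open subsets of $\mathbb{P}^1$ over either $\mathbb{R}$ or $\mathbb{C}$ are cofinite.
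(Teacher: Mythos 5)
Your proof is correct and follows essentially the same route as the paper: induction on dimension with base case given by \Cref{2d fs lemma}, the inductive step combining \Cref{contracted wfs is wfs} with the induction hypothesis to make all quotients over $A_i$ into factorization structures, and then \Cref{induction for qfs} to conclude. Your explicit remark that $A_i$, being non-empty and Zariski-open in $\mathbb{P}(V_i)$, contains two distinct lines $\lambda_1^i,\lambda_2^i$ is a small but welcome elaboration of a point the paper leaves implicit when invoking \Cref{induction for qfs}.
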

\begin{proof}
Induction on dimension $m$ of a weak factorization structure. For $m=2$, the classification of factorization structures of dimension 2 in \Cref{fs of dim 2} shows that any weak factorization structure is a factorization structure (see \Cref{2d fs lemma}).\par
Suppose now that the claim holds for weak factorization structures of dimension $m\geq2$ and let $\varphi$ be a weak factorization structure of dimension $m+1$. Using \Cref{contracted wfs is wfs} and the induction hypothesis, we find that for any $i \in \{1,\ldots,m\}$, $\lambda \in A_i$ and $v \in \lambda$, the quotients $\varphi_{i,v}$ of $\varphi$ are factorization structures. \Cref{induction for qfs} concludes that weak factorization structures of dimension $m+1$ are factorization structures.
\end{proof}
Finally,
\begin{thm}\label{fs quotient thm}
Let $\varphi:\mathfrak{h}\to V^*$ be a factorization structure of dimension $m$.
Then, for every $i \in \{1,\ldots,m\}$ there exists an open non-empty $A_i \subset \mathbb{P}(V_i)$ such that for every $\lambda \in A_i$ and non-zero $v \in \lambda$, the quotient $\varphi_{i,v}$ is a factorization structure.
Furthermore, $A_i$ is as in \Cref{crucial lemma}.
\end{thm}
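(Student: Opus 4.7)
The plan is to combine the pieces developed earlier in the subsection, since the theorem is essentially a cleanup statement that packages them together. First I would observe that every factorization structure is, in particular, a weak factorization structure in the sense of \Cref{wfs def}, because the strict dimension equality in \eqref{wfs def condition} implies the inequality \eqref{wfs defining}. Hence \Cref{crucial lemma} applies to $\varphi$ and produces, for each fixed $i\in\{1,\ldots,m\}$, an open non-empty subset $A_i\subset\mathbb{P}(V_i)$ governing the behaviour of contractions $\rho_{i,v}$.

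Next, I would invoke \Cref{contracted wfs is wfs}: for any $\lambda\in A_i$ and any non-zero $v\in\lambda$, the quotient $\varphi_{i,v}:\mathfrak{h}_{i,\lambda}\to\hat V_i^*$ defined by the diagram \eqref{quotient} is a weak factorization structure. At this stage the quotient is known to satisfy only the weaker defining inequality, not the sharp equality demanded by \Cref{fs def}.

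The key promotion step is then to apply \Cref{wfs is fs}, which asserts that every weak factorization structure is in fact a genuine factorization structure. Applied to $\varphi_{i,v}$, this immediately upgrades the weak factorization structure produced in the previous step to a factorization structure, which is exactly the claim of the theorem. The final sentence of the statement, that $A_i$ is the set provided by \Cref{crucial lemma}, is automatic from this chain of implications since that is precisely the set that was tracked throughout.

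There is no substantial obstacle left: the hard work sits in \Cref{crucial lemma} (delicate Schubert-closedness argument ruling out the intersection being swallowed by $\ker\rho_{i,v}$ on a large set), in \Cref{induction for qfs} (inductive step), and in \Cref{wfs is fs} (the induction on $m$ that starts from the $m=2$ classification of \Cref{fs of dim 2}). Once those are in place, the present theorem is a one-line corollary, and the proof is simply the composition \Cref{crucial lemma} $\Rightarrow$ \Cref{contracted wfs is wfs} $\Rightarrow$ \Cref{wfs is fs}.
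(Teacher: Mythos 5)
Your proposal is correct and follows exactly the paper's own argument: note that a factorization structure is a weak factorization structure, obtain $A_i$ from \Cref{crucial lemma}, apply \Cref{contracted wfs is wfs} to conclude the quotient is a weak factorization structure, and then upgrade via \Cref{wfs is fs}. No differences of substance from the paper's proof.
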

\begin{proof}
A factorization structure is in particular a weak factorization structure. \Cref{contracted wfs is wfs} shows that for every $i \in \{1,\ldots,m\}$ there exists an open non-empty $A_i \subset \mathbb{P}(V_i)$ such that for every $\lambda \in A_i$ and non-zero $v \in \lambda$, the quotient $\varphi_{i,v}$ is a weak factorization structure. In turn, by \Cref{wfs is fs}, $\varphi_{i,v}$ is a factorization structure, thus proving the claim.
\end{proof}

\begin{rem}\label[rem]{quotient as contraction}
Note that the image $\varphi_{i,v} \mathfrak{h}_{i,\lambda}$ of the quotient factorization structure can be computed by \eqref{quotient} as the contraction $\rho_{i,v} \varphi(\mathfrak{h})$.
This fact will be used is subsequent subsections freely.
\end{rem}

Finally, we are ready to describe the behaviour of factorization curves and their degrees in quotient spaces.\\

Let $\psi_j^{i,\lambda}: \mathbb{P}(V_j)\to\mathbb{P}(\mathfrak{h}_{i,\lambda})$
be a factorization curve in the quotient factorization structure \eqref{quotient}, thus generically given by
\begin{align}\label{qfc0}
\varphi_{i,v}\circ\psi_j^{i,\lambda}(\ell)
 = 
\varphi_{i,v}(\mathfrak{h}_{i,\lambda}) \cap \rho_{i,v}\Sigma_{j,\ell}^0.
\end{align}
\Cref{cor fs} shows that
$\varphi(\mathfrak{h}) \cap \Sigma_{j,\ell}^0$
does not lie in
$\ker \rho_{i,v} = \Sigma_{i,\lambda}^0$
for generic $\ell\in\mathbb{P}(V_j)$, hence
\begin{equation}\label{qfc}
\rho_{i,v}
\left( \varphi(\mathfrak{h}) \cap \Sigma_{j,\ell}^0 \right)
=
\varphi_{i,v}(\mathfrak{h}_{i,\lambda}) \cap \rho_{i,v}\Sigma_{j,\ell}^0
\end{equation}
holds generically. Combining \eqref{qfc0} and \eqref{qfc} we generically find
\begin{align}
\rho_{j,v} \circ \varphi \circ \psi_j (\ell)
 = 
 \varphi_{i,v}\circ\psi_j^{i,\lambda}(\ell),
\end{align}
which, using the commutativity of \eqref{quotient} and taking the $\varphi_{i,v}$-preimage, results in the generic equality
\begin{align}
P_{i,v} \circ \psi_j(\ell) = \psi_j^{i,\lambda}(\ell).
\end{align}
\Cref{alg geom curves unique} implies that the unique extension of $P_{i,v} \circ \psi_j$, ensured by \Cref{curve extension}, and $\psi_j^{i,\lambda}$ coincide. Additionally, the injectivity of factorization curves (\Cref{curves are inj}) shows that the curve $\psi_j$ intersects $\ker P_{i,v}=\psi_i(\lambda)$ at most once. In this case, we have
\begin{align}\label{iso on curves}
P_{i,v} \circ \psi_j(\ell) = \psi_j^{i,\lambda}(\ell), \text{ where } \ell\neq\lambda,
\end{align}
otherwise the equality holds everywhere. Finally, since $\psi_j$ and $\psi_j^{i,\lambda}$ are injective, $P_{i,v}$ restricted to $\text{Im }\psi_j \backslash \{\psi_i(\lambda)\}$ is bijective.

\begin{thm}\label{degree thm}
Let $\varphi$ be a complex factorization structure, $i\neq j$, and fix $\lambda\in\mathbb{P}(V_i)$. Then
\begin{align}
\deg \psi_j^{i,\lambda}
=
\begin{cases}
\deg \psi_j -1, & \text{ if  } \hspace{.1cm} \psi_i(\lambda) \in \im \psi_j\\
\deg \psi_j, & \text{ otherwise.}
\end{cases}
\end{align}
\end{thm}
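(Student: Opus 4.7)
The plan is to reduce the computation of $\deg \psi_j^{i,\lambda}$ to an analysis of the polynomial representation of $\psi_j$ after composition with the linear projection $P_{i,v}$. Choose a basis $e_0, \ldots, e_m$ of $\mathfrak{h}$ with $\psi_i(\lambda) = \langle e_0 \rangle$, so that $P_{i,v}$ is (away from its centre) the rational map $[a_0: \cdots: a_m] \mapsto [a_1: \cdots: a_m]$. In these coordinates $\psi_j = [f_0: \cdots: f_m]$ for homogeneous polynomials on $\mathbb{P}(V_j)$ of common degree $d = \deg \psi_j$ with no common zero, and by \Cref{curve extension} the curve $\psi_j^{i,\lambda}$ is the extension of the rational map $[f_1: \cdots: f_m]$, obtained by cancelling the common factor of $f_1, \ldots, f_m$. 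Hence $\deg \psi_j^{i,\lambda} = d - k$, where $k$ is the degree of $\gcd(f_1, \ldots, f_m)$.

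The case $\psi_i(\lambda) \notin \im \psi_j$ is immediate: $P_{i,v}(\psi_j(\ell))$ is defined and non-zero for every $\ell \in \mathbb{P}(V_j)$, so $f_1, \ldots, f_m$ have no common zero, $k = 0$, and $\deg \psi_j^{i,\lambda} = d$. If instead $\psi_i(\lambda) \in \im \psi_j$, then by injectivity of factorization curves (\Cref{curves are inj}) there is a unique $\ell_0 \in \mathbb{P}(V_j)$ with $\psi_j(\ell_0) = \psi_i(\lambda)$; at this point all of $f_1, \ldots, f_m$ vanish, so $k \geq 1$.

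The hard part is to show $k = 1$, equivalently, that at least one $f_r$ with $r \geq 1$ vanishes to order exactly one at $\ell_0$. I would prove this by showing that $\psi_j$ is an immersion at $\ell_0$. By \Cref{ell_k^0 fixed}, choose a local lift so that $\varphi \circ \psi_j(\ell) = \ell^0 \otimes T(\ell)$ for a regular map $T$ into $\hat V_j^*$. Picking a local parameter at $\ell_0$, the derivative of this lift in $V^*$ at $\ell_0$ is
\begin{equation*}
\omega \otimes T(\ell_0) + \ell_0^0 \otimes T'(\ell_0),
\end{equation*}
where $\omega \in V_j^*$ is the derivative at $\ell_0$ of the chosen lift of $\ell \mapsto \ell^0$. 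Since $\ell \mapsto \ell^0$ is a biregular isomorphism $\mathbb{P}(V_j) \to \mathbb{P}(V_j^*)$, its differential is non-zero, so $\omega$ and $\ell_0^0$ are linearly independent in $V_j^*$. Expanding in the basis $\{\ell_0^0, \omega\}$, the displayed derivative has $\omega$-component equal to $T(\ell_0)$, which is non-zero since $\varphi \circ \psi_j(\ell_0)$ is non-zero, while the value $\ell_0^0 \otimes T(\ell_0)$ has no $\omega$-component. Hence the derivative is not proportional to the value, so $\psi_j$ has non-zero differential at $\ell_0$ in $T_{\psi_j(\ell_0)}\mathbb{P}(\mathfrak{h})$.

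Translating back, since $f_0(\ell_0) \neq 0$ in our coordinates, the immersion property of $\psi_j$ forces at least one affine component $f_r/f_0$ ($r \geq 1$) to have a non-vanishing first-order term at $\ell_0$, so the corresponding $f_r$ vanishes to order exactly one at $\ell_0$. This gives $k = 1$ and $\deg \psi_j^{i,\lambda} = d - 1$. I expect the unramifiedness step to be the main obstacle, which is precisely where the structural identity of \Cref{ell_k^0 fixed} for factorization curves becomes essential; without it, an injective morphism from $\mathbb{P}^1$ could a priori have a degenerate differential.
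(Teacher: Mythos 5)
Your proof is correct, but it takes a genuinely different route from the paper's. The paper argues intersection-theoretically: a generic hyperplane $H\subset\mathbb{P}(\mathfrak{h}_{i,\lambda})$ corresponds to a hyperplane of $\mathbb{P}(\mathfrak{h})$ through $\psi_i(\lambda)$; Bertini and B\'ezout give $\deg\psi_j$ intersection points of $(P_{i,v})^{-1}(H)$ with $\im\psi_j$, and the bijectivity of $P_{i,v}$ on $\im\psi_j\backslash\{\psi_i(\lambda)\}$ transfers these to $H\cap\im\psi_j^{i,\lambda}$, with one point lost exactly when $\psi_i(\lambda)\in\im\psi_j$. You instead work with the homogeneous parametrization $[f_0:\cdots:f_m]$ and reduce the statement to computing $\deg\gcd(f_1,\ldots,f_m)$, which you pin down to $0$ or $1$ by showing that $\psi_j$ is unramified at the unique preimage $\ell_0$ of $\psi_i(\lambda)$, using the structural form $\varphi\circ\psi_j(\ell)=\ell^0\otimes T(\ell)$ of \Cref{ell_k^0 fixed}. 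Your approach is more elementary (no Bertini/B\'ezout) and it makes explicit a point that the intersection-theoretic argument leaves implicit: a hyperplane through a point of a curve meets it there with multiplicity at least the local multiplicity of the curve, so one needs $\im\psi_j$ to be smooth (equivalently, $\psi_j$ unramified) at $\psi_i(\lambda)$ for the degree to drop by exactly one. Your immersion computation — the derivative of the lift $v(\ell)\otimes t(\ell)$ has a nonzero component $\omega\otimes T(\ell_0)$ transverse to the value — supplies precisely this and, as a by-product, shows that every complex factorization curve is an everywhere-unramified embedding, which is of independent interest. The only cosmetic caveats: you should note that $\lambda$ must be such that the quotient is defined (i.e.\ $\dim(\varphi(\mathfrak{h})\cap\Sigma_{i,\lambda}^0)=1$, so that $\ker P_{i,v}=\psi_i(\lambda)$ is a line), and your identification of $\psi_j^{i,\lambda}$ with the extension of $[f_1:\cdots:f_m]$ rests on the generic equality $P_{i,v}\circ\psi_j=\psi_j^{i,\lambda}$ established in the discussion preceding the theorem, not on \Cref{curve extension} alone.
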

\begin{proof}
Note that there is at most one point on $\im \psi_j^{i,\lambda}$ which does not lie in the image of the restriction of $P_{i,v}$ to $\text{Im }\psi_j \backslash \{\psi_i(\lambda)\}$, depending on $\psi_i(\lambda)$ being in $\im \psi_j$. To compute $\deg \psi_j^{i,\lambda}$ we consider a generic hyperplane $H$ in $\mathbb{P}(\mathfrak{h}_{i,\lambda})$ which does not intersect $\psi_j^{i,\lambda}$ in this point, if such a point occurs, otherwise we consider any generic hyperplane $H$. Since every hyperplane in $\mathbb{P}(\mathfrak{h}_{i,\lambda})$ corresponds to a hyperplane in $\mathbb{P}(\mathfrak{h})$ through $\psi_i(\lambda)$, Bertini's theorem \cite{harris2013algebraic} applied to
$P_{i,v}: \im \psi_j \to \mathbb{P}(\mathfrak{h}_{i,\lambda})$ and $H$ shows that $\im \psi_j$ intersects $(P_{i,v})^{-1}(H)$ transversally, and therefore by Bézout's theorem they intersect in $\deg \psi_j$ points. By the choice of $H$, and because $P_{i,v}$ is bijective on $\text{Im }\psi_j \backslash \{\psi_i(\lambda)\}$, the intersection points of $H$ and $\psi_j^{i,\lambda}$ bijectively correspond to intersection points of $(P_{i,v})^{-1}(H)$ and $\psi_j$. Thus, degree remains the same, unless $\psi_i(\lambda) \in \im \psi_j$, in which case it drops by one.
\end{proof}

\subsection{Decomposable curves and Segre-Veronese factorization structures}\label{all fs are SV}
This subsection proves in \Cref{main thm} one of the main results of this article: if all factorization curves in a factorization structure $\varphi$ are decomposable, then $\varphi$ is a Segre-Veronese factorization structure. Consequently, if every factorization curve is decomposable, then all factorization structures are of Segre-Veronese type. We therefore ask

\begin{question}\label[question]{question}
Are all factorization curves decomposable?
\end{question}

\begin{rem}
The obstacle in proving that every factorization curve is decomposable is the validity of the following implication: There exists a quotient of a given factorization structure $\varphi$ such that if two curves are not equivalent in $\varphi$, then their corresponding quotient curves are not equivalent. If it were true, a simple argument, by contracting the whole factorization structure into 2-dimensional Segre while keeping track of degrees, would prove that curves are decomposable. Alternatively, an inductive argument, similar to the one in the proof of \Cref{main thm}, would do the job as well.
\end{rem}

\Cref{dec char}, \Cref{up to permutation of slots} and \Cref{real decomp} show that for a real/complex factorization structure $\varphi: \mathfrak{h} \to V^*$ with all curves decomposable, $\varphi(\mathfrak{h})$ contains, up to an isomorphism of factorization structures, the image of a Segre-Veronese factorization structure as a subspace, whose preimage under $\varphi$ is denoted here by $\mathcal{SV}$.

\begin{thm}\label{main thm}\leavevmode
\begin{enumerate}
\item[(i)] Suppose that any factorization structure of dimension $m-1$ with all curves decomposable is of Segre-Veronese type. Then, any factorization structure of dimension $m$ with all curves decomposable is of Segre-Veronese type. 
\item[(ii)] Any factorization structure with all curves decomposable is of Segre-Veronese type. In particular, such a factorization structure is the sum of spans of its factorization curves.
\end{enumerate}
\end{thm}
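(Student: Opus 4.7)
The plan is to prove (i) by induction using the quotient construction of \Cref{fs quotient thm}, and then to obtain (ii) by iterating (i) from the base case $m=2$, which is supplied by \Cref{fs of dim 2}. By \Cref{real decomp} it suffices to work over $\mathbb{C}$.

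For (i), let $\varphi:\mathfrak{h}\to V^*$ be $m$-dimensional with every $\psi_j$ decomposable. The equivalence relation of \Cref{equivalent} partitions $\{1,\ldots,m\}$ into classes $S_1,\ldots,S_k$ with $|S_p|=\deg\psi_j=:d_p$ for $j\in S_p$, giving $m=d_1+\cdots+d_k$. For each class $S_p$, fix a representative $j_p\in S_p$ and set $W_p:=V_{j_p}$. By \Cref{dec char} and \Cref{up to permutation of slots}, after an isomorphism of factorization structures (basis changes identifying $V_r\cong W_p$ for $r\in S_p$, composed with a suitable slot permutation), one may assume
\[
\varphi\circ\psi_{j_p}(\ell) \;=\; ins_p\bigl((\ell^0)^{\otimes d_p}\otimes\Gamma_p\bigr)
\]
for some 1-dimensional $\Gamma_p\subset\bigotimes_{q\neq p}(W_q^*)^{\otimes d_q}$, and every $\psi_j$ with $j\in S_p$ has the same image and linear span as $\psi_{j_p}$. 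Since $\varphi\circ\psi_{j_p}$ is a rational normal curve of degree $d_p$ whose linear span is exactly $ins_p(S^{d_p}W_p^*\otimes\Gamma_p)$, the candidate Segre-Veronese structure
\[
\mathcal{SV} \;:=\; \sum_{p=1}^k ins_p\bigl(S^{d_p}W_p^*\otimes\Gamma_p\bigr)
\]
lies inside $\varphi(\mathfrak{h})$, and it remains to prove equality.

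Pick any slot $i\in\{1,\ldots,m\}$, say $i\in S_p$, and a generic $\lambda\in\mathbb{P}(V_i)$ with generator $v$; by \Cref{fs quotient thm}, $\varphi_{i,v}:\mathfrak{h}_{i,\lambda}\to\hat V_i^*$ is an $(m-1)$-dimensional factorization structure. I claim all of its factorization curves remain decomposable. Applying $\rho_{i,v}$ to the explicit decomposable form above yields: for $j\in S_q$ with $q\neq p$, the contraction merely removes the slot-$i$ component of $\Gamma_q$ (nonzero for generic $v$ by \Cref{cor fs}), giving a decomposable curve of the same degree $d_q$; for $j\in S_p\setminus\{i\}$, the contraction produces $\langle v,\ell^0\rangle\cdot ins_p((\ell^0)^{\otimes(d_p-1)}\otimes\Gamma_p)$, and dividing out the common linear factor $\langle v,\ell^0\rangle$ leaves a decomposable curve of degree $d_p-1$, in accordance with \Cref{degree thm}. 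By the induction hypothesis of (i), $\varphi_{i,v}(\mathfrak{h}_{i,\lambda})$ is therefore Segre-Veronese and equals the sum of the linear spans of its factorization curves, an $m$-dimensional subspace. Since $P_{i,v}$ maps $\mathcal{SV}$ surjectively onto this sum, $\dim\mathcal{SV}\geq m$; and the 1-dimensional $\ker P_{i,v}=\varphi^{-1}(\psi_i(\lambda))$ is a point on the image of $\psi_i$, hence lies in the $p$-th summand of $\mathcal{SV}$. Thus $\dim\mathcal{SV}=m+1=\dim\varphi(\mathfrak{h})$, forcing $\mathcal{SV}=\varphi(\mathfrak{h})$ and completing (i).

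Part (ii) then follows by induction on $m$: the base case $m=2$ is furnished by the classification in \Cref{fs of dim 2} (both Segre and Veronese are Segre-Veronese), and the induction step is (i). The ``sum of spans'' clause is immediate from the form of $\mathcal{SV}$ and \Cref{SVfs curves}. The main obstacle is the bookkeeping around the quotient: correctly tracking which equivalence classes and decomposable presentations of curves descend under $\rho_{i,v}$, ensuring that the tensors $\Gamma_p$ can be normalised consistently across a class so that $\mathcal{SV}$ is well-defined, and verifying that the degree drops for curves in $S_p$ agree with \Cref{degree thm}—these are precisely the inputs that make the final dimension count succeed.
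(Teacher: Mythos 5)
Your proof is correct and follows essentially the same route as the paper's: quotient by a generic $\lambda$, invoke the inductive hypothesis on the $(m-1)$-dimensional quotient, and deduce $\mathcal{SV}=\mathfrak{h}$ from the surjectivity of $P_{i,v}\big|_{\mathcal{SV}}$ together with $\ker P_{i,v}\subset\mathcal{SV}$ (the paper phrases this with the isomorphism theorems, you with a dimension count, but the content is identical). You are in fact slightly more careful than the paper at one point — you explicitly check that the quotient's factorization curves remain decomposable (via the explicit contraction of the form from \Cref{dec char}, with the degree drop matching \Cref{degree thm}), a step the paper's wording ``by the assumption'' glosses over — while being slightly more terse at the surjectivity step, which the paper justifies by lifting a basis of the quotient from points lying on its curves; both come to the same thing.
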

In the following proof, the set of 1-dimensional spaces corresponding to a factorization curve is called a curve as well.
\begin{proof}\leavevmode
\begin{enumerate}
\item[(i)]
The goal is to show $\mathcal{SV}=\mathfrak{h}$. Let $\mathfrak{h}_{i,v}$ be a quotient factorization structure as in \eqref{quotient} which, by the assumption, it is of Segre-Veronese type, where $v$ is a non-zero vector on a generic $\lambda\in\mathbb{P}(V_i)$. Note that $\ker P_{i,v} \subset \mathcal{SV}$, and that if $P_{i,v}\big|_\mathcal{SV}: \mathcal{SV} \to \mathfrak{h}_{i,v}$ were surjective, then the 3rd and 1st isomorphism theorems for vector spaces give the claim
\begin{equation}
\mathfrak{h}/\mathcal{SV}=
\frac{\mathfrak{h}/\ker P_{i,v}}{\mathcal{SV}/\ker P_{i,v}}=
\mathfrak{h}_{i,v}/\mathfrak{h}_{i,v}=
0.
\end{equation}
It remains to show $P_{i,v}\big|_\mathcal{SV}$ is surjective, or equivalently
$\rho_{i,v}\big|_{\varphi(\mathcal{SV})}: \varphi(\mathcal{SV}) \to \varphi_{i,v}(\mathfrak{h}_{i,v})$
is surjective, where $\varphi_{i,v}(\mathfrak{h}_{i,v})$ is, up to an isomorphism, of the form \eqref{SV image}.\par
Note that \eqref{iso on curves} shows that $\rho_{i,v}$ is an isomorphism from $\varphi\circ\psi_j(\ell)$ to $\varphi_{i,v}\circ\psi_j^{i,\lambda}(\ell)$ for every $j\in\{1,\ldots,m\}\backslash\{i\}$ and every $\ell\in\mathbb{P}(V_j)$ such that $\varphi\circ\psi_j(\ell)\neq\varphi\circ\psi_i(\lambda)$. Now, since $\varphi_{i,v}(\mathfrak{h}_{i,v})$ is the span of its factorization curves \eqref{standard curves}, which are rational normal curves within their spans, any point in $\varphi_{i,v}(\mathfrak{h}_{i,v})$ can be written as a linear combination of a basis lying on these curves; there is a large freedom for such a choice of basis. We choose such a basis $P_1,\ldots,P_m$ so that none of these vectors lie on the lines where $\rho_{i,v}$ is not an isomorphism in the above sense. We lift this basis to vectors lying on curves in $\mathcal{SV}$ via the corresponding restriction of $\rho_{i,v}$, resulting in $m$ linearly independent vectors, thus proving surjectivity. Additionally, the $m$-dimensional space spanned by these is linearly independent from $\varphi \circ \psi_i(\lambda)$, hence providing another proof that $\mathcal{SV}=\mathfrak{h}$.
\item[(ii)]
Induction with respect to the dimension of factorization structure. The base case $m=2$ follows from the classification (\Cref{fs of dim 2}). The rest follows from part (i) of this theorem.
See also \Cref{SVfs curves}.
\end{enumerate}
\par

\end{proof}

In \Cref{k=2 example} we observed that defining tensors $\Gamma_1$ and $\Gamma_2$ of a standard Segre-Veronese factorization structure corresponding to a partition $m=d_1+d_2$ are symmetric. This observation generalises as follows.
\begin{proposition}\label[proposition]{symmetric SV}
If $\{\Gamma_j\}_{j=1}^k$ define a standard Segre-Veronese factorization structure with a partition $m=d_1+\cdots+d_k$, then $\Gamma_j
\subset
\bigotimes_{\substack{i=1\\i\neq j}}^k
S^{d_i}W_i^*$, $j=1,\ldots,k$.
\end{proposition}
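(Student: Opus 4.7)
The plan is to argue by induction on $m = d_1 + \cdots + d_k$, using \Cref{k=2 example} as the base case $k = 2$. Fix $j$ and $i \neq j$, and let $\sigma$ denote the transposition of two slots $(i,q_1), (i,q_2)$ in the $i$-th group; since symmetry in each group is equivalent to invariance under all such transpositions, it suffices to show that $\sigma$ fixes every spanning tensor $t_j$ of $\Gamma_j$. The case $k = 2$ is exactly \Cref{k=2 example}, so assume $k \geq 3$.

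Pick an index $r \notin \{i,j\}$. For a generic $\ell_r \in \mathbb{P}(W_r)$ and non-zero $v_r \in \ell_r$, \Cref{fs quotient thm} yields the quotient factorization structure $\varphi_{(r,1),v_r}$ of dimension $m - 1$. A direct computation using \eqref{SV image} shows that its image equals
\begin{align*}
ins_r\bigl(S^{d_r - 1}W_r^* \otimes \Gamma_r\bigr) + \sum_{s \neq r} ins_s\bigl(S^{d_s}W_s^* \otimes \rho_{(r,1),v_r}(\Gamma_s)\bigr),
\end{align*}
so the quotient is again a standard Segre-Veronese factorization structure with defining tensors $\tilde{\Gamma}_s := \rho_{(r,1),v_r}(\Gamma_s)$ for $s \neq r$ and partition $d_1 + \cdots + (d_r - 1) + \cdots + d_k$ (when $d_r = 1$ the $r$-th group disappears and the residual $\Gamma_r$ is absorbed into the remaining summands by the quotient's dimension count). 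The induction hypothesis then gives that $\tilde{\Gamma}_j$ is symmetric in the $i$-th group of the quotient, which coincides with the original $i$-th group since $r \neq i$.

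The transfer step exploits that $\sigma$ and $\rho_{(r,1),v_r}$ act on disjoint slots and therefore commute. Writing $\tilde{t}_j := \rho_{(r,1),v_r}(t_j)$ and using $\sigma(\tilde{t}_j) = \tilde{t}_j$, one obtains
\begin{align*}
\rho_{(r,1),v_r}\bigl(\sigma(t_j) - t_j\bigr) = \sigma(\tilde{t}_j) - \tilde{t}_j = 0,
\end{align*}
so $\sigma(t_j) - t_j \in \Sigma^0_{(r,1),\ell_r}$ for every $\ell_r$ in a Zariski-open subset of $\mathbb{P}(W_r)$. Since a non-zero tensor in $\bigotimes_{q \neq j}(W_q^*)^{\otimes d_q}$ has its $(r,1)$-slice annihilated by at most one line, letting $\ell_r$ vary forces $\sigma(t_j) = t_j$. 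Varying $(q_1,q_2)$ over the $i$-th group and then varying $i \neq j$ concludes the induction. The main obstacle is identifying the quotient as a standard Segre-Veronese factorization structure with the stated partition; this is routine for $s \neq r$ but requires care in the degenerate case $d_r = 1$, where the symmetric contraction $S^{d_r}W_r^* \to S^{d_r - 1}W_r^*$ collapses to a scalar and one must verify via dimension that $\Gamma_r$ is subsumed by the remaining $\tilde{\Gamma}_s$ pieces.
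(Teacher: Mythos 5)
Your proof is correct, but it follows a genuinely different route from the paper's. The paper inducts on the number of groups $k$ and, for a fixed $j$, contracts away \emph{all} $d_j$ slots of the $j$-th group; the resulting quotient is a standard Segre--Veronese structure in which $\Gamma_j$ literally sits as an element, and \Cref{main thm} (the structure equals the sum of the spans of its curves) places $\Gamma_j$ inside $\sum_{i\neq j} ins_i\bigl(S^{d_i}W_i^*\otimes\rho\Gamma_i\bigr)$, whence the induction hypothesis on the $\rho\Gamma_i$ finishes immediately. You instead induct on $m$, contract a \emph{single} slot of an auxiliary third group $r\notin\{i,j\}$, and transfer the symmetry of $\rho\Gamma_j$ back to $\Gamma_j$ by commuting the transposition $\sigma$ with the contraction and varying $\ell_r$ so that $\sigma(t_j)-t_j$ is annihilated by two independent contractions, hence zero. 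Your route buys a more hands-on "lifting" mechanism that avoids invoking \Cref{main thm} in the generic case, at the price of needing $k\geq 3$ in the inductive step (correctly offset by taking $k\leq 2$ as base cases via \Cref{k=2 example}) and of a case split on $d_r$.

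The one place where your justification is thinner than it should be is the degenerate case $d_r=1$. A bare "dimension count" does not by itself show that $\Gamma_r$ is absorbed into $\sum_{s\neq r} ins_s\bigl(S^{d_s}W_s^*\otimes\tilde\Gamma_s\bigr)$: a priori that sum could have dimension less than $m$, with $\Gamma_r$ supplying the missing direction, and then the quotient would not be presented as a standard Segre--Veronese structure in the sense of \Cref{SV def} and the induction hypothesis could not be applied. The clean fix is exactly the tool the paper uses at the analogous juncture: the quotient's factorization curves are $\ell\mapsto ins_s\bigl((\ell^0)^{\otimes d_s}\otimes\tilde\Gamma_s\bigr)$, which are visibly decomposable, so \Cref{main thm}(ii) says the quotient equals the sum of their spans; that sum therefore has dimension $m$ and contains $\Gamma_r$. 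With that sentence added, your argument is complete.
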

\begin{proof}
Induction on $k$. The case $k=1$ is trivial and $k=2$ was solved in \eqref{k=2} above.

Suppose the claim holds for $k\geq2$, fix $j\in\{1,\ldots,k+1\}$ and set $d_0=0$.
The idea is to form $d_j$ quotients iteratively in $(d_1+\cdots+d_{j-1}+1)$-st slot in such a way that \Cref{fs quotient thm} is applicable, i.e., so that each quotient is a factorization structure.
This contracts grouped $j$-slots, leaving $\Gamma_j$ behind as we will see.
Note that after the first quotient, the $(d_1+\cdots+d_{j-1}+2)$-nd slot becomes $(d_1+\cdots+d_{j-1}+1)$-st slot, and so on.
Clearly in each step, one can choose $v$ and $\lambda$ as in \Cref{fs quotient thm} so that the corresponding quotient is a factorization structure.
While a complete tracking of indices, $v$'s and $\lambda$'s is possible, it contributes no essential understanding and significantly complicates the presentation, and will therefore be bypassed.
We denote the composition of all $d_j$ quotient maps by $\rho$ (see \Cref{quotient as contraction}) and apply it on
\begin{align}\label{Gammas are symmetric}
\varphi(\mathfrak{h})=
\sum_{i=1}^{k+1}
ins_i
\left(
	S^{d_i}W_i^*
	\otimes
	\Gamma_i
\right)
\end{align}
which results in
\begin{align}\label{Gammas are symmetric contracted}
\Gamma_j
+
\sum_{\substack{i=1\\i\neq j}}^{k+1}
ins_i
\left(
	S^{d_i}W_i^*
	\otimes
	\rho \Gamma_i
\right).
\end{align}
By \Cref{main thm},
\begin{align}
\Gamma_j
\in
\sum_{\substack{i=1\\i\neq j}}^{k+1}
ins_i
\left(
	S^{d_i}W_i^*
	\otimes
	\rho \Gamma_i
\right)
\end{align}
which together with the induction hypothesis,
\begin{align}
\rho \Gamma_i
\subset
\bigotimes_{\substack{b=1\\b\neq i,j}}^{k+1}
S^{d_b}W_b^*,
\hspace{.2cm}
i\in\{1,\ldots,k+1\}\backslash\{j\},
\end{align}
give the claim.
\end{proof}
\begin{corollary}\label[corollary]{decomposable SV tensors}
If a Segre-Veronese factorization structure of dimension $m=d_1+\ldots+d_k$ is determined by 1-dimensional spaces
\begin{align}
\Gamma_j
=
\bigotimes_{\substack{r=1\\r\neq j}}^k
\bigotimes_{p=1}^{d_r}
a_j^{r,p},
\hspace{.2cm}
a_j^{r,p} \subset W_r^*,
\end{align}
then $a_j^{r,1}=
\cdots=
a_j^{r,d_r} =: a_j^r$,
i.e.,
\begin{align}
\Gamma_j=
\bigotimes_{\substack{r=1\\r\neq j}}^k (a^r_j)^{\otimes d_r}.
\end{align}
\end{corollary}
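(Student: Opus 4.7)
The plan is to derive the corollary almost immediately from \Cref{symmetric SV}, which we will use as a black box. The first step is to invoke that proposition to obtain the key inclusion $\Gamma_j \subset \bigotimes_{i \neq j} S^{d_i} W_i^*$ for every $j \in \{1,\ldots,k\}$. In other words, $\Gamma_j$ must be symmetric within each group of $d_i$ slots corresponding to index $i \neq j$; all the substantive content of the corollary is already packaged inside \Cref{symmetric SV}.

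What remains is a purely multilinear-algebraic observation. Under the hypothesis $\Gamma_j = \bigotimes_{r \neq j} \bigotimes_{p=1}^{d_r} a_j^{r,p}$, fix any $r \neq j$ and isolate the corresponding group of $d_r$ slots: the decomposable tensor $a_j^{r,1} \otimes \cdots \otimes a_j^{r,d_r}$ must then lie in $S^{d_r} W_r^*$. The second step is to invoke the classical fact that a non-zero decomposable symmetric tensor in $(W_r^*)^{\otimes d_r}$ is necessarily of the form $v^{\otimes d_r}$ for some $v \in W_r^*$; equivalently, the factors of a decomposable symmetric tensor must be mutually proportional.

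To justify this multilinear fact briefly, one applies any transposition swapping two adjacent slots $i$ and $i+1$. After factoring out the other $d_r - 2$ slots, which is possible because the tensor is decomposable, one is reduced to showing that a rank-one tensor $a \otimes b \in W_r^* \otimes W_r^*$ is symmetric only if $a = b$ as 1-dimensional subspaces. This is immediate: if $u \otimes w = w \otimes u$ as non-zero tensors, then viewing both sides as rank-one $2 \times 2$ matrices and comparing rows forces $u$ and $w$ to be proportional. Iterating this over $i$, all the $a_j^{r,p}$, $p = 1, \ldots, d_r$, coincide as 1-dimensional subspaces of $W_r^*$; calling this common subspace $a_j^r$ and doing the same for every $r \neq j$ and every $j$ yields $\Gamma_j = \bigotimes_{r \neq j} (a_j^r)^{\otimes d_r}$ as claimed.

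I do not expect any real obstacle at this stage: the genuine work was carried out in \Cref{symmetric SV} via iterated quotienting and an appeal to \Cref{main thm}, and the remainder is essentially a one-line application of a standard fact about decomposable symmetric tensors, combined with the fact that each $W_r$ is 2-dimensional (so rank-one symmetric tensors have an especially transparent form).
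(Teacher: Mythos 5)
Your proof is correct and matches the paper's intent: the corollary is stated without proof immediately after \Cref{symmetric SV}, precisely because it follows by the elementary argument you give (the decomposability of $\Gamma_j$ lets you factor out the other groups, reducing to the standard fact that a non-zero decomposable element of $S^{d_r}W_r^*$ must be a pure power $v^{\otimes d_r}$). You have correctly identified that all substantive work lives in \Cref{symmetric SV} and the rest is a one-line piece of multilinear algebra.
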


The following lemma shows that $d_1,\ldots,d_k$ in the partition $m = d_1 + \cdots + d_k$ corresponding to a Segre-Veronese factorization structure are invariants.

\begin{lemma}\label[lemma]{SV iso class} $ $
\begin{itemize}
\item[(i)] Any two orderings of $d_1,\ldots,d_k$ in the partition of $m$ give isomorphic standard Segre-Veronese factorization structures provided $\Gamma_1,\ldots,\Gamma_k$ are fixed. Furthermore, the isomorphism is given by permuting grouped slots.
\item[(ii)] Standard Segre-Veronese factorization structures corresponding to distinct partitions cannot be isomorphic for any choice of  $\Gamma_1,\ldots,\Gamma_k$.
\end{itemize}
\end{lemma}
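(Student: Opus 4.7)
The plan is to prove (i) by constructing the isomorphism explicitly as a permutation of grouped slots, and (ii) by extracting the partition as a multiset of intrinsic invariants of the factorization structure, namely the degrees of its equivalence classes of factorization curves.

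For (i), let $\tau$ be a permutation of $\{1,\ldots,k\}$ reordering the sequence $d_1,\ldots,d_k$ to $d_{\tau(1)},\ldots,d_{\tau(k)}$. The two standard Segre--Veronese structures inhabit tensor products whose slots are labelled by pairs $(j,r)$ with $r\in\{1,\ldots,d_j\}$, organized as blocks of sizes $d_1,\ldots,d_k$ in the first structure and of sizes $d_{\tau(1)},\ldots,d_{\tau(k)}$ in the second. I would define the slot permutation $\sigma$ of $\{1,\ldots,m\}$ that sends the $r$-th slot of the $j$-th block of the first structure to the $r$-th slot of the $\tau^{-1}(j)$-th block of the second structure. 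Taking each $\phi_i$ in \Cref{fs def} to be the identity map between the (identical) vector spaces, and $\Phi$ the linear map induced on $\mathfrak{h}$, the braiding $\sigma$ carries the summand $ins_j(S^{d_j}W_j^*\otimes\Gamma_j)$ of the first structure onto the corresponding summand of the second, by the very construction of $ins_j$. This realizes the required isomorphism and at the same time exhibits it as a permutation of grouped slots.

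For (ii), the key observation is that an isomorphism of factorization structures, as defined in \Cref{fs def}, automatically carries factorization curves to factorization curves. Indeed, chasing the commutative square with the data $(\Phi,\phi_1,\ldots,\phi_m,\sigma)$ shows that the pullback of $\Sigma_{j,\ell}^0$ under the tensor-product isomorphism is $\Sigma_{\sigma^{-1}(j),(\phi_j^t)^{-1}\ell^0}^0$, so the $j$-th factorization curve of the target structure is $\Phi\circ\psi_{\sigma^{-1}(j)}^{(1)}\circ g_j^{-1}$ for an invertible projective transformation $g_j$ of projective lines. Such reparametrisations and composition with a linear isomorphism preserve both the degree (\Cref{degree defn} is coordinate-free) and the $\sim$-equivalence class. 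Hence the multiset of pairs (equivalence class of factorization curve, degree), and in particular the multiset of degrees of pairwise inequivalent factorization curves counted with the number of slots mapping to them, is an invariant of the isomorphism class.

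It then remains to identify this invariant for a standard Segre--Veronese structure: by \Cref{SVfs curves}, the curves $\psi_{d_1+\cdots+d_{i-1}+1},\ldots,\psi_{d_1+\cdots+d_i}$ all coincide as the curve $\mathcal{C}_i$ of degree $d_i$, for $i=1,\ldots,k$, and the $\mathcal{C}_i$'s are pairwise inequivalent because they have pairwise different linear spans $ins_i(S^{d_i}W_i^*\otimes\Gamma_i)$ inside the direct sum decomposition of $\varphi(\mathfrak{h})$. Thus the multiset of degrees of pairwise inequivalent factorization curves equals $\{d_1,\ldots,d_k\}$, which recovers the partition (up to reordering, which (i) identifies). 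The main subtlety in this step is to verify that the linear spans of the $\mathcal{C}_i$'s are pairwise distinct so that the equivalence classes really are distinct; this is immediate from \Cref{SVfs curves} since the summands in \eqref{SV image} are linearly independent by the dimension hypothesis defining a standard Segre--Veronese structure.
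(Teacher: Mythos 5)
Your proposal is correct and follows essentially the same route as the paper: part (i) is the braiding map permuting grouped slots, and part (ii) extracts the partition from the degrees of the (pairwise inequivalent) factorization curves, which are invariants of the isomorphism class. One small inaccuracy: the summands of \eqref{SV image} are \emph{not} linearly independent for $k\geq 2$ (their dimensions sum to $m+k$), so your justification that the spans of the $\mathcal{C}_i$ are pairwise distinct should instead note, e.g., that $ins_j(S^{d_j}W_j^*\otimes\Gamma_j)$ has full symmetric content in grouped $j$-slots while every other summand is one-dimensional there — but the claim itself is true and the argument goes through.
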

\begin{proof}$ $
\begin{itemize}
\item[(i)] These are isomorphic via the braiding map $\sigma$ (see \Cref{fs def}) which permutes groups of slots corresponding to the partition.
\item[(ii)] Positive integers $d_1,\ldots,d_k$ determining a factorization structure can be viewed as degrees of factorization curves (see \Cref{SVfs curves}). The claim is proved by observing that these are invariant under isomorphisms of factorization structures.
\end{itemize}
\end{proof}

\begin{rem}\label[rem]{partition assignment}
This lemma ensures a well-defined assignment from isomorphism classes of Segre-Veronese factorization structures onto finite subsets of positive integers $\{d_1,\ldots,d_k\}$. Observe that this map classifies product Segre-Veronese factorization structures. In the following subsection we use 
the map to describe the classification of decomposable Segre-Veronese factorization structures.
\end{rem}

\subsection{Characterisation of decomposable factorization structures}\label{classification}
This subsection proves another important result of this article.
It uses products of factorization structures and the correspondence from \Cref{correspondence} below to characterise decomposable Segre-Veronese factorization structures in \Cref{decom fs thm} as iterative products of Veronese factorization structures.
This structural result is the first step towards the classification of Segre-Veronese factorization structures and, in addition, it is allows to solve the extremality equation for associated separable Kähler geometries uniformly, as outlined in \Cref{dg motivation}.

\begin{rem}\label[rem]{correspondence}
We explain a correspondence between isomorphism classes of decomposable Segre-Veronese factorization structures and pairs consisting of an isomorphism class of a decomposable Segre factorization structure and a set of positive integers.\par
Starting with a decomposable Segre-Veronese factorization structure $\varphi$ of dimension $m$, we have a partition $m=d_1+\cdots+d_k$ and 1-dimensional subspaces
\begin{align}\label{dec SV tensors}
\Gamma_j=
\bigotimes_{\substack{r=1\\r\neq j}}^k (a_j^r)^{\otimes d_r}.
\end{align}
Then, for each $r\in\{1,\ldots,k\}$ we take, in grouped $r$-slots, $d_r-1$ (order and choice independent) factorization structure quotients as in \Cref{fs quotient thm} of $\varphi$ to get a decomposable Segre factorization structure of dimension $k$
\begin{align}\label{Segre correspondece tensors}
\sum_{j=1}^{k}
\text{ins}_j
\left(
W_j^* \otimes
\bigotimes_{\substack{r=1\\r\neq j}}^k a_j^r
\right),
\end{align}
while remembering the partition $\{d_1,\ldots,d_k\}$ (see also proof of \Cref{symmetric SV} and \Cref{quotient as contraction} for more details on quotients). The isomorphism class of \eqref{Segre correspondece tensors} together with $\{d_1,\ldots,d_k\}$ form the pair from the correspondence. Observe that every factorization structure isomorphic with $\varphi$ corresponds to the same pair. This gives a well-defined assignment.\par
Conversely, starting with a decomposable Segre factorization structure, say \eqref{Segre correspondece tensors}, we use the set $\{d_1,\ldots,d_k\}$ and the Veronese embedding $W_r^*\to (W_r^*)^{\otimes d_r}$, $v\mapsto v^{\otimes d_j}$, in each slot $r\in\{1,\ldots,k\}$ to obtain an inclusion of vector spaces
\begin{align}\label{Segre correspondece}
\sum_{j=1}^{k}
\text{ins}_j \left( S^{d_j} W_j^*\otimes \Gamma_j \right)
\hookrightarrow
\bigotimes_{j=1}^k S^{d_j}W_j^*
\hookrightarrow
\bigotimes_{j=1}^k (W_j^*)^{\otimes d_j},
\end{align}
where $\Gamma_j$ are now as in \eqref{dec SV tensors}. The image has dimension $m+1$ as can be seen by taking consecutive factorization structure quotients, whose kernel is 1-dimensional, as above. This determines an assignment on isomorphism classes which is inverse to the one describe above.
\end{rem}

\begin{defn}\label[defn]{full-product defn}
A decomposable Segre factorization structure of dimension $m$, $m\geq2$, admits a \textit{full-product in $j$th slot} if it equals to
\begin{align}\label{full-product}
\text{ins}_j
\left(
q\otimes Q
+
V_j^* \otimes \Gamma_j
\right),
\end{align}
where $Q$ is a decomposable Segre factorization structure of dimension $m-1$ which admits a full-product in $r$th slot for some $r\in\{1,\ldots,m\}\backslash\{j\}$, $V_j^*$ is a 2-dimensional vector space, and $q\subset V_j^*$ and $\Gamma_j\subset Q$ are 1-dimensional subspaces. A (decomposable Segre) factorization structure of dimension 1, being a 2-dimensional vector space, admits a full-product by definition. We say that a decomposable Segre factorization structure admits a \textit{full-product} if it admits a full-product in $j$th slot for some $j$.
\end{defn}

\begin{rem}
A decomposable Segre-Veronese factorization structure admitting a full-product can be defined similarly by substituting $V_j^*$ in \eqref{full-product} by $S^{d_j}W_J^*$. However, we use the correspondence from \Cref{correspondence} and say that a decomposable Segre-Veronese factorization structure admits a full-product if its corresponding decomposable Segre factorization structure admits a full-product. Note that these two ways of defining full-product are equivalent.
\end{rem}

We remark that in \Cref{full-product defn} we use without loss of generality the identification of a factorization structure with its image (see \Cref{the remark}).\par
One can consult \Cref{full-product example} for an example of a full-product decomposable Segre-Veronese.

\begin{example}\label[example]{full-product example}
We illustrate how a decomposable Segre admitting a full-product is built using inductive products of 1-dimensional factorization structures and outline the shape of defining tensors.\par
A 2-dimensional (decomposable) Segre factorization structure is a product of two 1-dimensional factorization structures, and hence admits full-product in both slots. Forming a product of this 2-dimensional Segre with a 1-dimensional factorization structure yields
\begin{align}\label{3fs}
\left( V_1^* \otimes \gamma_2 + \gamma_1 \otimes V_2^* \right)
\otimes \gamma +
\lambda \otimes V_3^*,
\end{align}
where $\lambda \subset V_1^* \otimes \gamma_2 + \gamma_1 \otimes V_2^*$ is assumed to be decomposable, and hence by \Cref{tensors split} either $\lambda=a\otimes \gamma_2$ or $\lambda=\gamma_1 \otimes b$ for some 1-dimensional $a\subset V_1^*$ or $b\subset V_2^*$. We note that regardless of the choice of $\lambda$, \eqref{3fs} admits full-products in at least two distinct slots; one being the 3rd slot and the other is the 1st or 2nd depending on the choice of $\lambda$. Note, if $\lambda=\gamma_1\otimes\gamma_2$, then the full-product exists in all slots which recovers the product Segre factorization structure. Forming yet another product
\begin{align}\label{4fs}
\left(\left( V_1^* \otimes \gamma_2 + \gamma_1 \otimes V_2^* \right)
\otimes \gamma +
\lambda \otimes V_3^*\right)
\otimes \delta
+\pi\otimes V_4^*
\end{align}
to make the pattern more visible, we see that again for any admissible choices of $\lambda$ and $\pi$, \eqref{4fs} admits at least two and at most four full-products. Observe in \eqref{4fs}, that three summands belong into $\Sigma_{4,\delta^0}^0$. More importantly, for any choice, $\pi$ decomposes so that another three summands belong to $\Sigma_{r,\tau^0}^0$ for some $r\in\{1,2,3\}$ and $\tau\in\mathbb{P}(V_r)$.
\end{example}

In general, it is plain to see that $m-1$ summands in a full-product \eqref{full-product} lie in $\Sigma_{j,q^0}^0$. The following lemma shows that there are another $m-1$ summands with a similar property. This helps us to establish the main claim of this subsection that decomposable Segre-Veronese factorization structures are given by full-products.

\begin{lemma}\label[lemma]{share a slot lemma}
Let a decomposable Segre factorization structure of dimension $m$ admit a full-product in the $j$th slot. Then, there exist $r\neq j$, and $\lambda\in\mathbb{P}(V_r)$, such that for all $i \neq r:$
\begin{align}
\text{ins}_i(V_i^*\otimes\langle\Gamma_i\rangle)
\subset
\Sigma_{r,\lambda}^0.
\end{align}
\end{lemma}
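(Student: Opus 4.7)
The plan is to write the decomposable Segre as $\mathfrak h=\sum_{i=1}^m ins_i(V_i^*\otimes\Gamma_i)$ with $\Gamma_i=\bigotimes_{s\ne i}a_i^s$, so that the lemma amounts to producing, besides the constant column $a_i^j=q$ (for $i\ne j$) coming from the full-product at slot $j$, a second index $r\ne j$ and a 1-dimensional $\mu\subset V_r^*$ with $a_i^r=\mu$ for every $i\ne r$; the conclusion will then follow with $\lambda=\mu^0$. I would proceed by induction on $m$; the base case $m=2$ is immediate since only a single coefficient $a_j^r$ needs controlling.

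For the inductive step $m\ge 3$, I would unpack both layers of the full-product,
$$\mathfrak h=ins_j\bigl(q\otimes Q+V_j^*\otimes\Gamma_j\bigr),\qquad Q=ins_{r_0}^{(j)}\bigl(q'\otimes Q'+V_{r_0}^*\otimes\tilde\Gamma_{r_0}\bigr),$$
and match with the Segre expansions of $\mathfrak h$ and $Q$. This yields $\tilde\Gamma_i=\bigotimes_{s\ne i,j}a_i^s$, gives $a_i^{r_0}=q'$ for every $i\in\{1,\ldots,m\}\setminus\{j,r_0\}$, and identifies $Q$ as a product (in the sense of \Cref{prod}) of the 1-dimensional factorization structure $V_{r_0}^*$ and the $(m-2)$-dimensional $Q'$, with distinguished lines $S=q'\subset V_{r_0}^*$ and $T=\tilde\Gamma_{r_0}\subset Q'$. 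Since $\Gamma_j\subset Q$ is 1-dimensional and decomposable across the $r_0$-slot and the remaining slots, \Cref{tensors split} will force one of two cases.

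In the first case, $\Gamma_j=q'\otimes K$ with $K\subset Q'$, so $a_j^{r_0}=q'$; combined with the above this gives $a_i^{r_0}=q'$ for every $i\ne r_0$, and one takes $r=r_0$ and $\lambda=(q')^0$. In the second case, $\Gamma_j=I\otimes\tilde\Gamma_{r_0}$ with $I\subset V_{r_0}^*$, so $a_j^s=a_{r_0}^s$ for every $s\ne j,r_0$; applying the induction hypothesis to the $(m-1)$-dimensional $Q$, which admits a full-product at $r_0$, will produce $r\in\{1,\ldots,m\}\setminus\{j,r_0\}$ and $\lambda\in\mathbb P(V_r)$ with $a_i^r=\lambda^0$ for every $i\in\{1,\ldots,m\}\setminus\{j,r\}$; in particular $a_{r_0}^r=\lambda^0$, and the case-2 equality then forces $a_j^r=a_{r_0}^r=\lambda^0$, completing the desired second constant column.

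The main technical point, and the one I expect to be the principal obstacle, is the identification of $Q$ as an honest product of factorization structures so that \Cref{tensors split} legitimately applies to $\Gamma_j\subset Q$; once that is set up carefully (together with the bookkeeping comparing $\mathfrak h$ with its full-product and Segre forms at the coefficient level $(a_i^s)$), the two cases close by elementary pattern-matching between the full-product data $(q,q',I,K)$ and the coefficient matrix.
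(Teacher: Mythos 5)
Your proof is correct and follows essentially the same route as the paper's: induction on $m$, unpacking the two nested layers of the full-product, applying \Cref{tensors split} to the decomposable $\Gamma_j\subset Q$ to obtain the two cases, then closing case (1) directly (take $r=r_0$) and case (2) by the induction hypothesis applied to $Q$. The identification of $Q$ as a product of the $1$-dimensional factorization structure $V_{r_0}^*$ with $Q'$, which you flag as the main technical point, is exactly what the paper relies on as well.
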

\begin{proof}
Induction on the dimension of a factorization structure. The base case, $m=2$, is obvious. Suppose the statement holds in dimension $m-1$, $m\geq3$, and write a decomposable Segre factorization structure $\varphi(\mathfrak{h})$ of dimension $m$ which admits a full-product in the $j$th slot as
\begin{align}\label{original fs}
\varphi(\mathfrak{h})=
\text{ins}_j
\left(
q\otimes Q
+
V_j^* \otimes \Gamma_j
\right),
\end{align}
where $\Gamma_j \subset Q$, $q\subset V_j^*$ and $Q$ is a decomposable Segre factorization structure of dimension $m-1$, which admits a full-product in $r$th slots for some $r\in\{1,\ldots,m\}\backslash\{j\}$. In particular,
\begin{align}\label{Q}
Q=
\text{ins}_r
\left(
	\lambda \otimes P
	+
	V_r^*\otimes \pi
\right),
\end{align}
for some decomposable $\pi \subset P$ and $\lambda \subset V_r^*$, where $P$ is a decomposable Segre factorization of dimension $m-2$ which admits a full-product.\par
Since \eqref{original fs} is a decomposable factorization structure, in particular $\Gamma_j$ is decomposable, \Cref{tensors split} implies that 
either
\begin{align}\label{former}
\Gamma_j =  \text{ins}_r \left (\lambda \otimes \Lambda \right) 
\hspace{.5cm}\text{for some }
\Lambda \subset P,
\end{align}
or
\begin{align}\label{latter}
\Gamma_j = \text{ins}_r \left( \Pi \otimes \pi \right) 
\hspace{.5cm}\text{for some }
\Pi \subset V_r^*.
\end{align}\par
In \eqref{former} case, it is clear that
\begin{align}
\text{ins}_j\left(V_j^*\otimes\Gamma_j\right)
=
\text{ins}_j\left(V_j^*\otimes \text{ins}_r \left( \lambda \otimes \Lambda \right) \right) 
\leq
\varphi(\mathfrak{h})
\end{align}
and another $m-2$ summands sitting in
\begin{align}
\text{ins}_j \left( q\otimes \text{ins}_r\left(\lambda\otimes P\right)\right) \leq \varphi(\mathfrak{h})
\end{align}
lie in $\Sigma_{r,\lambda^0}^0$.\par
For \eqref{latter} case we use the induction hypothesis stating that $\text{ins}_r\left(V_r^*\otimes \pi\right) \leq Q$ and another $m-3$ summands in $\text{ins}_r\left(\lambda\otimes P\right) \leq Q$ lie in $\Sigma_{i,\mu}^0$ for some $i\in\{1,\ldots,m-1\}$ and $\mu\in\mathbb{P}(V_i)$. Clearly, this gives the claim.
\end{proof}

\begin{lemma}\label[lemma]{Segre induction}
Let every decomposable Segre factorization structure of dimension $m-1$ admits a full-product. Then every decomposable Segre factorization structure of dimension $m$ admits a full-product.
\end{lemma}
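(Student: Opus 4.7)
The plan is to take, for each slot $k$, a generic quotient of the given $m$-dimensional decomposable Segre factorization structure in slot $k$, apply the inductive hypothesis together with \Cref{share a slot lemma} to the quotient, and extract a full-product slot for the original via pigeonhole. By \Cref{decomposable SV tensors} I first write $\varphi(\mathfrak{h}) = \sum_{i=1}^m ins_i(V_i^* \otimes \Gamma_i)$ with $\Gamma_i = \bigotimes_{r \neq i} a_i^r$, so that a full-product in slot $j$ amounts precisely to the equality of all lines $a_i^j \subset V_j^*$ for $i \neq j$. For a generic $\lambda_k \in A_k$ and non-zero $v_k \in \lambda_k$, \Cref{quotient as contraction} gives the quotient image as $\Gamma_k + \sum_{i \neq k} ins_i'(V_i^* \otimes \bigotimes_{r \neq i,k} a_i^r)$; the generic choice ensures $\psi_k(\lambda_k) \notin \im \psi_j$ for all $j \neq k$, so by \Cref{degree thm} the quotient factorization curves still have degree one and are therefore decomposable, whence \Cref{main thm} combined with \Cref{decomposable SV tensors} identifies the quotient as a decomposable Segre factorization structure of dimension $m - 1$ with defining tensors $\bigotimes_{r \neq i,k} a_i^r$ for $i \neq k$.

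Next, the inductive hypothesis yields a full-product slot in each of the $m$ quotients, and \Cref{share a slot lemma} applied to the $(m-1)$-dimensional quotient upgrades this to two distinct full-product slots $j_1^{(k)}, j_2^{(k)} \in \{1,\ldots,m\} \setminus \{k\}$. Unfolding the definition, for $s = 1, 2$ the lines $a_i^{j_s^{(k)}}$ agree for all $i \in \{1,\ldots,m\} \setminus \{k, j_s^{(k)}\}$. Consider the $2m$ pairs $(k, j_s^{(k)})$: since $j_1^{(k)} \neq j_2^{(k)}$, each fixed $k$ contributes two distinct second coordinates, so pigeonhole on the $m$ possible second coordinates produces a slot $j^*$ appearing for at least two different values $k_1 \neq k_2$. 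For $m \geq 4$ the index set $\{1,\ldots,m\} \setminus \{k_1, k_2, j^*\}$ is nonempty, supplying a common index $i^*$ at which the two constraints pin down a common line $q$; hence $a_i^{j^*} = q$ for all $i \neq j^*$. Defining $Q$ to be the decomposable Segre of dimension $m-1$ with tensors $\bigotimes_{r \neq i,j^*} a_i^r$ for $i \neq j^*$ (which admits a full-product by the inductive hypothesis), one obtains the desired presentation $\varphi(\mathfrak{h}) = ins_{j^*}(q \otimes Q + V_{j^*}^* \otimes \Gamma_{j^*})$.

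The main obstacle will be the case $m = 3$, for which the pigeonhole argument fails because $\{1,2,3\} \setminus \{k_1, k_2, j^*\}$ is empty and the per-quotient constraints reduce to tautologies on singletons. This case must be handled separately via direct dimension analysis: setting $A_i := ins_i(V_i^* \otimes \Gamma_i)$ and $o_{ij} := \dim(A_i \cap A_j) \in \{0,1\}$, the condition $\dim \varphi(\mathfrak{h}) = 4$ on three 2-dimensional summands forces at most one of the three $o_{ij}$'s to vanish (otherwise the sum of 2-dimensional summands remains of dimension 6), and each equality $o_{ij} = 1$ translates directly into an equality $a_p^r = a_q^r$ in a determined slot $r$; a short case analysis produces a full-product slot in every admissible configuration. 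The case $m = 2$ is immediate since each slot $j$ has only the single line $a_i^j$ to match.
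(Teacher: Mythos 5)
Your argument for $m\ge 4$ is correct and takes a genuinely different route from the paper's. You quotient in \emph{every} slot and combine the resulting $2m$ shared-slot constraints (two per quotient, via the inductive hypothesis plus \Cref{share a slot lemma}) by pigeonhole; the paper instead quotients in a \emph{single} slot $j$, notes that $\Gamma_j$ lies in the quotient $\tilde Q$, which is a full-product by hypothesis, and applies \Cref{tensors split} to force $\Gamma_j$ to decompose compatibly with one of the two shared slots of $\tilde Q$ supplied by \Cref{share a slot lemma}. The paper's version is uniform in $m\ge 3$; yours is combinatorially more transparent but loses the low-dimensional case. One small omission in your closing step: by \Cref{full-product defn}, exhibiting a full-product in slot $j^*$ requires not only $a_i^{j^*}=q$ for all $i\ne j^*$ but also that $Q$ is an $(m-1)$-dimensional decomposable Segre factorization structure with $\Gamma_{j^*}\subset Q$; this is exactly where the paper takes the quotient in slot $j^*$ and invokes \Cref{main thm}, and you need the same observation before citing the inductive hypothesis for $Q$.

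The genuine gap is $m=3$, which cannot be deferred: it is the first step of the induction in \Cref{decom fs thm} above the trivial base $m=2$. Your pigeonhole degenerates there, as you note, but the substitute is only a sketch, and its one concrete claim is justified incorrectly: if $o_{12}=0$ then $A_1+A_2$ already equals the $4$-dimensional space $\varphi(\mathfrak{h})$, so $A_3\subset A_1+A_2$ with $o_{13}=o_{23}=0$ is not excluded by any dimension count --- nothing ``remains of dimension $6$''. The claim is nevertheless true, but proving it needs the decomposable structure of the summands: $o_{12}=0$ means $a_1^3\ne a_2^3$, and writing an element $\alpha_3^1\otimes\alpha_3^2\otimes w$ of $A_3$ with $w\notin a_1^3\cup a_2^3$ as a sum of elements of $A_1$ and $A_2$, then contracting the third slot against covectors annihilating $a_1^3$ and $a_2^3$ respectively, forces $a_3^1=a_2^1$ and $a_3^2=a_1^2$, i.e.\ $o_{23}=o_{13}=1$; either equality produces a shared slot and hence a full-product. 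Alternatively, note that the paper's single-quotient argument already covers $m=3$ without a separate case.
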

We note that the proof of this lemma uses less assumptions than required in the statement. However, the stronger statement suffices for proving our end-result, and reveals a rigidity of decomposable Segre factorization structures as the proof is by induction. A similar situation occurred in \Cref{induction for qfs} when proving that every weak factorization structure is a factorization structure.
\begin{proof}
Note that if we would know that $m-1$ summands in $\varphi(\mathfrak{h})$ lie in $\Sigma_{j,q^0}^0$ for some $j\in\{1,\ldots,m\}$, then
\begin{align}\label{product?}
\varphi(\mathfrak{h})=
\text{ins}_j\left( q \otimes Q + V_j^* \otimes \Gamma_j \right)
\end{align}
must be a product. Indeed, a factorization structure quotient of $\varphi(\mathfrak{h})$ in $j$th slot gives an $m$-dimensional vector space, $Q+\Gamma_j$, and \Cref{main thm} shows $\Gamma_j \subset Q$. Now we would use the induction hypothesis for $Q$, saying that $(m-1)$-dimensional decomposable Segre factorization structures admit a full-product, which shows that \eqref{product?} admits a full-product. Thus we are left to prove that there exists an index $j$ and $q\in\mathbb{P}(V_j^*)$ such that $m-1$ summands belong to $\Sigma_{j,q^0}^0$.\par
Fix any $j\in\{1,\ldots,m\}$, and let $\tilde{Q}$ be the image of the quotient factorization structure of $\varphi(\mathfrak{h})$ in $j$th slot with respect to some $v$ and $\lambda$ (see \Cref{fs quotient thm} and \Cref{quotient as contraction}). We have $\Gamma_j \subset \tilde{Q}$, and by assumptions $\tilde{Q}$ is full-product. In particular
\begin{align}
\tilde{Q}=
\text{ins}_r
\left(
	\lambda \otimes P
	+
	V_r^*\otimes \pi
\right),
\end{align}
and \Cref{tensors split} implies that $\Gamma_j$ decomposes either as \eqref{former} or \eqref{latter}. We proceed similarly to the lemma above. In the former case it is immediate that $m-1$ summands in $\varphi(\mathfrak{h})$ lie in $\Sigma_{r,\lambda^0}^0$, while for the latter case we apply \Cref{share a slot lemma} to $\tilde{Q}$ and conclude the proof.
\end{proof}

Finally, the following theorem completely characterises decomposable Segre-Veronese factorization structures.

\begin{thm}\label{decom fs thm}
Every decomposable Segre-Veronese factorization structure admits a full-product.
\end{thm}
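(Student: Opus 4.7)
The plan is to reduce the statement to the Segre case via the correspondence in Remark \ref{correspondence} and then run a straightforward induction using the heavy lifting already done in \Cref{Segre induction} and \Cref{share a slot lemma}.

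First I would invoke the definition immediately following \Cref{full-product defn}: a decomposable Segre-Veronese factorization structure admits a full-product precisely when its associated decomposable Segre factorization structure (obtained by taking the iterated factorization-structure quotients in grouped slots, as described in the correspondence of Remark \ref{correspondence}) admits a full-product. This turns the theorem into the claim that \emph{every decomposable Segre factorization structure admits a full-product}, removing the Veronese weights $d_1,\ldots,d_k$ from the picture entirely.

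Next I would prove this Segre statement by induction on the dimension $m$. The base case $m=1$ is automatic from \Cref{full-product defn}, and $m=2$ is immediate since any 2-dimensional Segre factorization structure $V_1^{*}\otimes\gamma_2+\gamma_1\otimes V_2^{*}$ is literally a product of two 1-dimensional factorization structures and hence a full-product in either slot (compare \Cref{full-product example}). For the inductive step, assuming the claim holds in dimension $m-1$, \Cref{Segre induction} gives the conclusion in dimension $m$ verbatim. Composing this induction with the reduction above yields the theorem.

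Because the real content is packaged in \Cref{Segre induction} and \Cref{share a slot lemma}, the only thing that requires care in the write-up is the reduction step, that is, checking that the notion of full-product transfers cleanly across the correspondence of \Cref{correspondence}. The potential obstacle is purely bookkeeping: one must verify that if the associated $k$-dimensional Segre factorization structure is a full-product in the sense of \Cref{full-product defn}, then replacing each factor $V_j^{*}$ by $S^{d_j}W_j^{*}$ and re-inserting the Veronese embeddings in grouped slots produces a Segre-Veronese structure of the shape $\mathrm{ins}_j(q\otimes Q+S^{d_j}W_j^{*}\otimes\Gamma_j)$ required by the Segre-Veronese analogue of \Cref{full-product defn}, and conversely that quotienting grouped slots preserves the full-product shape. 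Both directions are direct from the way quotients act on the defining tensors, already observed in the proof of \Cref{symmetric SV}; no new ideas are needed beyond the two lemmas cited.
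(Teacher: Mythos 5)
Your proposal is correct and mirrors the paper's proof exactly: reduce to the Segre case via the correspondence of \Cref{correspondence}, establish the base case in dimension $2$, and apply \Cref{Segre induction} for the inductive step. The additional bookkeeping you flag about the correspondence respecting full-products is precisely what the paper relies on implicitly (citing compatibility of the correspondence with full-products), so there is no gap.
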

\begin{proof}
We use the correspondence from \Cref{correspondence} and its compatibility with full-products to reduce the statement to: Every decomposable Segre factorization structure admits a full-product. We prove this claim by induction on dimension. The base case holds trivially as any Segre factorization structure of dimension 2 is a full-product. For the induction hypothesis suppose that every decomposable Segre factorization structure of dimension $m-1$ admits a full-product, $m\geq3$. \Cref{Segre induction} gives the claim.
\end{proof}

\begin{rem}
Observe that the number of ways in which a decomposable Segre-Veronese factorization structure is a full-product is an invariant. \Cref{share a slot lemma} implies that there are always at least two ways. Factorization structures corresponding to only two full-products are the most complicated ones. The other extreme, when a full-product exists in each slot, corresponds to the product Segre-Veronese factorization structure. \Cref{full-product example} gives a recipe how to build decomposable Segre(-Veronese) factorization structures with prescribed number of full-products.
\end{rem}

\begin{rem}
We remark that ideas from this subsection can be directly adapted to more general factorization structures, whose defining tensors are of the form
\begin{align}
\Gamma_j
=
\bigotimes_{\substack{i=1\\ i\neq j}}^k
\gamma_i,
\end{align}
where $\gamma_i \in S^{d_i}W^*_i$.
\end{rem}

\section{Compatible cones and polytopes}\label{s3}

This section studies convex polyhedral cones $\sigma$ whose projectivised normals $n_1,\ldots,n_r$ lie on factorization curves of a fixed factorization structure.
These cones, along with their duals and sections, are called compatible with the factorization structure, and their construction is given in \Cref{s30}.
The impact of factorization structures on polyhedral geometry is demonstrated in \Cref{Ver cone is simplicial}, which proves that polytopes compatible with the Veronese factorization structure are simple. \par
Facets and faces of $\sigma$ lie on the annihilators $n_1^0,\ldots,n_r^0$ and their intersections, respectively.
Remarkably, the compatibility ensures that these admit elegant and explicit descriptions within the framework of factorization structures, as shown in \Cref{faces} - one of the main results of this section.
Its proof relies on quotients of factorization structures, a technically demanding achievement from \Cref{s2}. \par

Building on this, \Cref{s31} constructs cones compatible with the product Segre-Veronese factorization structure.
Its important outcome is a generalisation of Gale's evenness condition (\Cref{Gale}), which characterises facet-defining hyperplanes of such cones.
Combined with \Cref{faces}, this enables explicit description of all facet-determining linear spaces.
Moreover, we observe that the generalised Gale's evenness condition can be adapted for general compatible cones and polytopes. \par
Finally, we reinterpret Vandermonde identities via the Veronese factorization structure, providing a blueprint for extending such identities to arbitrary factorization structures. These results yield explicit examples of Delzant and rational Delzant compatible polytopes, paving the road for their construction in general. \\

After we recall basics of cones, we define compatible cones and polytopes, and provide examples. Here, polytopes are always compact and convex.\par
A \textit{convex polyhedral cone} $\sigma$ in an $(m+1)$-dimensional vector space $\mathfrak{h}^*$ generated by vectors $v_1,\ldots,v_r$ is the set of their non-negative linear combinations. For the rest of this subsection we assume that none of $v_j$ is in the relative interior of the cone. Geometrically, $\sigma$ contains convex combinations, hence is piecewise linear, and thus can equivalently be viewed as the intersection of closed half-spaces. Dually, the latter correspond to rays in $\mathfrak{h}$ and, in fact, these generate the \textit{dual cone}
\begin{equation}
\sigma^\vee=\{v\in \mathfrak{h}\hspace{.1cm}|\hspace{.1cm} \langle \alpha,v \rangle \geq 0,\hspace{.1cm} \forall \alpha\in \sigma\}
\end{equation}
of $\sigma$. Therefore, $\sigma^\vee$ is a convex polyhedral cone as well. One the other hand, the rays determined by $v_1,\ldots,v_r\in\mathfrak{h}^*$ give rise to closed half-spaces in $\mathfrak{h}$ which intersect in $\sigma^\vee$, and hence $(\sigma^\vee)^\vee=\sigma$. Observe that an oriented hyperplane $H_v\subset\mathfrak{h}^*$ given by $v\in\mathfrak{h}$ is a \textit{supporting hyperplane} of $\sigma$, i.e., $\langle v, \sigma\rangle\geq0$, if and only if $v\in\sigma^\vee$. Finally, every $v\in\sigma^\vee$ determines a \textit{face} $H_v\cap\sigma$ of $\sigma$; 1-dimensional faces are called \textit{extremal rays} and codimension one faces are \textit{facets}. For example, rays generated by $v_1,\ldots,v_r$ are extremal rays of $\sigma$, and determine facets of $\sigma^\vee$. A hyperplane supporting $\sigma$ which gives rise to a facet is called \textit{facet-supporting hyperplane}.\par
This work is exclusively concerned with cones whose affine hyperplane sections are polytopes. As \Cref{pointed lemma} shows, such cones are \textit{pointed}, i.e., contain no non-trivial subspace. Note that if a cone $\sigma\subset\mathfrak{h}^*$ has strictly less than $\dim \mathfrak{h}$ facets, the half-spaces defining $\sigma$ intersect in a non-trivial subspace. Thus, a pointed cone has at least as many facets as the ambient dimension is.
\begin{lemma}$ $ \label[lemma]{pointed lemma}
Let $\sigma$ be a convex polyhedral cone in a vector space $\mathfrak{h}^*$. Then,
\begin{enumerate}
\item[(i)]
$\sigma$ is pointed if and only if $\dim\sigma^\vee=\dim\mathfrak{h}$, where $\dim\sigma^\vee$ is the dimension of the smallest linear subspace of $\mathfrak{h}$ containing $\sigma^\vee$.
\item[(ii)]
$\sigma$ is pointed  if and only if it admits an affine hyperplane section given by $\epsilon\in\mathfrak{h}$, called an \textit{affine chart}, such that the set $\sigma\cap\{\epsilon=1\}$ is a polytope. All such affine charts are given by $\epsilon\in\text{Int}(\sigma^\vee)$.
\end{enumerate}
\end{lemma}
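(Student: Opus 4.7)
The plan is to reduce everything to a clean duality statement and a recession-cone computation, and to run (i) first since (ii) will use it.

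For (i), I would start from the lineality characterisation: $\sigma$ is pointed if and only if $L(\sigma) := \sigma \cap (-\sigma) = 0$. Using the biduality $(\sigma^\vee)^\vee = \sigma$, a vector $v \in \mathfrak{h}^*$ lies in $L(\sigma)$ precisely when both $v$ and $-v$ lie in $\sigma$, which by duality is equivalent to $\langle \alpha, v \rangle = 0$ for every $\alpha \in \sigma^\vee$. Hence $L(\sigma)$ is the annihilator in $\mathfrak{h}^*$ of the linear span of $\sigma^\vee$, so $L(\sigma) = 0$ if and only if $\sigma^\vee$ spans $\mathfrak{h}$, i.e., $\dim \sigma^\vee = \dim \mathfrak{h}$. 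This gives (i).

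For (ii), the key lemma I would prove is that the recession cone (in the affine hyperplane $\{\epsilon = 1\}$) of the section $S_\epsilon := \sigma \cap \{\epsilon = 1\}$ equals $\sigma \cap \{\epsilon = 0\}$. One inclusion is immediate: if $r \in \sigma$ and $\epsilon(r) = 0$, then $x + tr \in \sigma$ for $x \in S_\epsilon$ and $t \geq 0$, and it still satisfies $\epsilon = 1$. The reverse inclusion uses closedness: if $x + tr \in S_\epsilon$ for all $t \geq 0$, then $\epsilon(r) = 0$ and $\tfrac{1}{t}(x + tr) \to r$, so $r \in \sigma$ by closedness of $\sigma$ under scaling and limits. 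Therefore $S_\epsilon$ is bounded if and only if $\sigma \cap \{\epsilon = 0\} = 0$, which by definition of the dual cone is the condition $\epsilon \in \operatorname{Int}(\sigma^\vee)$. Non-emptiness is then automatic: any non-zero $r \in \sigma$ satisfies $\epsilon(r) > 0$, so $r/\epsilon(r) \in S_\epsilon$.

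Finally, assembling both halves: a polytope section exists if and only if $\operatorname{Int}(\sigma^\vee) \neq \emptyset$, i.e., $\sigma^\vee$ is full-dimensional, which by (i) is equivalent to $\sigma$ being pointed. Moreover the set of $\epsilon$ for which $S_\epsilon$ is a polytope coincides with $\operatorname{Int}(\sigma^\vee)$, yielding the last sentence of (ii).

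The only substantive technical point, and the one place I expect a careful writeup to matter, is the recession-cone identification, since it silently uses that $\sigma$ is closed and that $S_\epsilon$ inherits its recession cone from $\sigma$ cut by the linear hyperplane $\{\epsilon = 0\}$; everything else is pure duality bookkeeping.
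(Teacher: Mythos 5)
Your proposal is correct, and it diverges from the paper in part (ii). For (i) the two arguments are essentially the same: the paper also identifies the lineality space $\sigma\cap(-\sigma)$ with the annihilator of the span of $\sigma^\vee$ via biduality, just phrased as two separate contrapositives. For (ii) the paper exploits finite generation: for $\epsilon\in\mathrm{Int}(\sigma^\vee)$ it exhibits the section explicitly as the convex hull of the rescaled generators $v_j/\langle v_j,\epsilon\rangle$, and for the converse it argues that boundedness forces $\langle\epsilon,v_j\rangle>0$ on every generator, whence $\sigma$ contains no line. Your recession-cone argument is more intrinsic (it never touches the generators or the paper's standing assumption on them) and cleanly isolates the one analytic input, closedness of $\sigma$; what you lose is the explicit convex-hull description of the section, which the paper reuses immediately afterwards. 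One step you state too quickly: the equivalence of $\sigma\cap\{\epsilon=0\}=0$ with $\epsilon\in\mathrm{Int}(\sigma^\vee)$ is \emph{not} "by definition of the dual cone" --- the kernel condition alone does not even place $\epsilon$ in $\sigma^\vee$ (take $\sigma$ the positive quadrant and $\epsilon=(-1,-1)$, where the section is empty). The correct characterisation is that $\epsilon\in\mathrm{Int}(\sigma^\vee)$ iff $\langle\alpha,\epsilon\rangle>0$ for every nonzero $\alpha\in\sigma$, and to pass from "section nonempty and bounded" back to this strict positivity you need a short convexity argument (a segment from a point of the section to a putative $\alpha$ with $\langle\alpha,\epsilon\rangle<0$ would cross $\ker\epsilon$ in a nonzero point of $\sigma$, excluding the degenerate case where $\sigma$ is a line). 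This is a small patch, and the paper's own converse is comparably informal, so I would count your proof as sound once that sentence is expanded.
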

\begin{proof}$ $
\begin{enumerate}
\item[(i)]
If $\dim\sigma^\vee < \dim \mathfrak{h}$, then $\sigma^\vee$ is contained in a proper subspace $U$ of $\mathfrak{h}$, and $0\neq U^0\subset \sigma$, i.e., $\sigma$ is not pointed. On the other hand, if the largest linear subspace $\sigma\cap\{-\sigma\}$ in $\sigma$ is non-trivial, then supporting hyperplanes of $\sigma$ lie in the annihilator $(\sigma\cap\{-\sigma\})^0$, and hence $\dim\sigma^\vee < \dim \mathfrak{h}$. 
\item[(ii)]
Let $v_1,\ldots,v_r$ be generators of $\sigma$. Note that $\epsilon$ from the interior of $\sigma^\vee$ supports $0\in\sigma$ and has the generators of $\sigma$ on its positive side. Hence the convex hull of $v_j/\langle v_j, \epsilon \rangle$, $j=1,\ldots,r$, is the convex polytope $\sigma\cap\{\epsilon=1\}$. For the other implication, if $\sigma\cap\{\epsilon=1\}$ is a convex polytope, in particular a bounded set, then the affine hyperplane $\epsilon=1$ intersects every ray generated by $v_1,\ldots,v_r$ transversally. Therefore,  $\langle\epsilon,v_j\rangle>0$, $j=1,\ldots,r$, and hence $\sigma$ cannot contain a non-trivial linear subspace.
\end{enumerate}
\end{proof}

\subsection{Compatibility in general}\label{s30}
\begin{defn}\label[defn]{compatible polytope}
A full-dimensional and pointed convex polyhedral cone in $\mathfrak{h}$ is called \textit{compatible} with a factorization structure $\varphi : \mathfrak{h} \to V^*$ if its projectivised edges lie on factorization curves of $\varphi$. A convex polytope is called \textit{compatible} with a factorization structure $\varphi$ if it is a section of a cone $\sigma$ whose dual $\sigma^\vee$ is compatible with $\varphi$.
\end{defn}
To rephrase, a convex polytope is compatible with a factorization structure if it is full-dimensional, and is a section a pointed convex polyhedral cone whose projectivised normals lie on factorization curves.\\

We exemplify cones and polytopes compatible with 2-dimensional factorization structures, originally found in \cite{apostolov2015ambitoric}.
To keep our cartoons uncomplicated we discuss projectivised versions of these cones/polytopes, but the reader is strongly encouraged to work out 3-dimensional polyhedral geometry according to \Cref{compatible polytope}. For more details see \cite{brandenburg2024}.

\begin{example}\label[example]{segre polyotpes}
\Cref{2b} displays the images of two factorization lines/curves $\psi_1$ and $\psi_2$ in 2-dimensional projective space $\mathbb{P}(V_1^* \otimes \Gamma_1 + \Gamma_2 \otimes V_2^*)$, associated with 2-dimensional Segre factorization structure (\Cref{segre lines example}), together with their intersection point $\Gamma_2 \otimes \Gamma_1$ and a choice of points $a_i,b_i \in \im \psi_i$, $i=1,2$. \par
Under the projective duality, \Cref{2a} shows lines and points arrangement in $\mathbb{P}(( V_1^* \otimes \Gamma_1 + \Gamma_2 \otimes V_2^* )^*)$: the line $(\Gamma_2 \otimes \Gamma_1)^0$, dual to the point $\Gamma_2 \otimes \Gamma_1$, with two marked points $(\im \psi_1)^0$ and $(\im \psi_2)^0$, dual to the lines $\im \psi_1$ and $\im \psi_2$, and lines $(a_i)^0, (b_i)^0$, dual to $a_i,b_i$, passing through the point $(\im \psi_i)^0$, $i=1,2$. \par
The blue region is the projectivisation of a 2-dimensional polytope, whose de-projectivisation is a polytope compatible with 2-dimensional Segre factorization structure, since its projectivised normals $a_i,b_i$, $i=1,2$, lie on factorization curves.
The ambiguity in the definition of de-projectivisation comes from the fact that these are really meant to be 3-dimensional pictures of planes, lines and of a cone section rather than their projectivisations.\\

\begin{figure}[h]
\centering
\begin{subfigure}{0.49\textwidth}
	\centering
	\begin{tikzpicture}
	\node at (0,0) {\includegraphics[width=9cm]{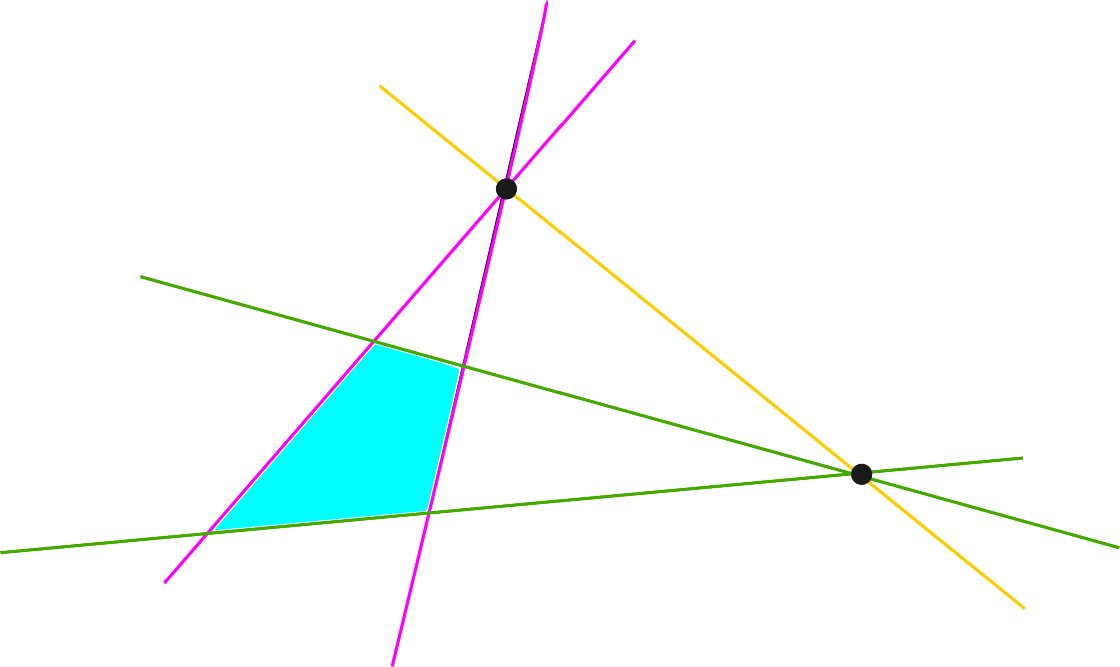}};
	\node at (1.5,.5) {$(\Gamma_2 \otimes \Gamma_1)^0$};
	\node at (0,3) {$(a_1)^0$};
	\node at (1,2.6) {$(b_1)^0$};
	\node at (-1.4,1) {$(\im \psi_1)^0$};
	\node [rotate=69] at (2,-2) {$(\im \psi_2)^0$};
	\node at (-3.7,.7) {$(b_2)^0$};
	\node at (-4.1,-1.3) {$(a_2)^0$};
	\end{tikzpicture}
	\caption{projectivised compatible polytope}
	\label{2a}
\end{subfigure}
\begin{subfigure}{0.49\textwidth}
\hspace{1cm}
	\begin{tikzpicture}
	\node at (0,0) {\includegraphics[width=5.5cm]{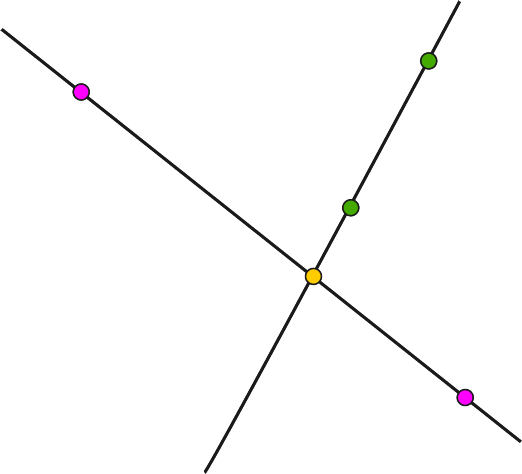}};
	\node at (2.8,2.5) {$\im \psi_2$};
	\node at (1.4,2) {$b_2$};
	\node at (.6,.5) {$a_2$};
	\node at (-2.1,2.5) {$\im \psi_1$};
	\node at (-2,1.1) {$a_1$};
	\node at (2,-2.1) {$b_1$};
	\node at (-.5,-.5) {$\Gamma_2 \otimes \Gamma_1$};
	\end{tikzpicture}
\hspace{2cm}
\caption{Segre factorization structure}
\label{2b}
\end{subfigure}
\caption{} \label{fig2}
\end{figure}

If fact, every 2-dimensional polytope with 4 edges is compatible with a 2-dimensional Segre factorization structure.
Indeed, viewing the polytope as an affine section of a cone $\sigma$, the dual cone $\sigma^\vee$ has four extremal rays lying on four 1-dimensional spaces, which are the annihilators of the planes determining facets of $\sigma$.
These four 1-dimensional spaces determine two planes $\Pi_1$ and $\Pi_2$ in three possible ways.
In all three cases, $\Pi_1 + \Pi_2$ is the ambient 3-dimensional space, and, after the projectivisation, we obtain 2-dimensional projective space with two (intersecting) lines.
To complete the argument we need to find a linear isomorphism $\Phi: \Pi_1+\Pi_2 \to V_1^* \otimes \Gamma_1 + \Gamma_2 \otimes V_2^*$ (see the definition of isomorphism in \Cref{fs def}) sending the distinguished planes to the distinguished planes, which is trivial.
We found that projectivised normals of $\sigma$ lie on factorization curves, therefore the polytope is compatible.
Said differently, a Segre factorization structure can be fit onto the vector space $\Pi_1 + \Pi_2$ so that the polytope is compatible with it.
\end{example}

\begin{example}
\Cref{3b} illustrates the quadric $\im \psi$ with four marked points $a_1,\ldots,a_4$ in the 2-dimensional projective space $\mathbb{P}(S^2W^*)$ associated to the 2-dimensional Veronese factorization structure.
The quadric represents the factorization curve $\psi_1 = \psi_2 =: \psi$, being the rational normal curve of degree 2, a quadric and a conic too.
Dually, \Cref{3a} shows lines $(a_1)^0,\ldots,(a_4)^0$ dual to points $a_1,\ldots,a_4$, which are tangent to the dual quadric $(\im \psi)^*$.
The orange region is the projectivisation of a 2-dimensional polytope which is compatible with 2-dimensional Veronese factorization structure.

\begin{figure}[h]
\centering
\begin{subfigure}{0.49\textwidth}
	\centering
	\begin{tikzpicture}
	\node at (0,0) {\includegraphics[width=7cm]{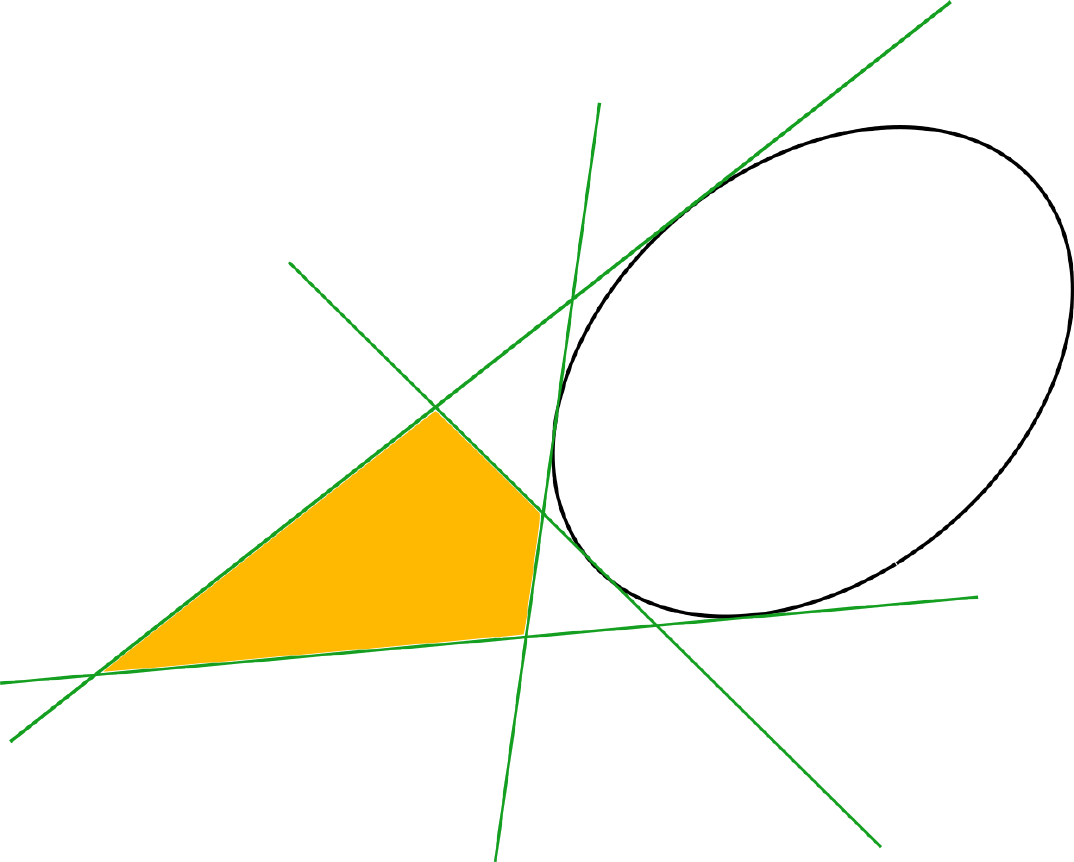}};
	\node at (2.5,1.2) {$(\im \psi)^*$};
	\node at (.5,2.6) {$(a_2)^0$};
	\node at (-2,1.5) {$(a_3)^0$};
	\node at (3.2,3) {$(a_1)^0$};
	\node at (-4.1,-1.5) {$(a_4)^0$};
	\end{tikzpicture}
	\caption{projectivised compatible polytope}
	\label{3a}
\end{subfigure}
\begin{subfigure}{0.49\textwidth}
\hspace{2cm}
	\begin{tikzpicture}
	\node at (0,0) {\includegraphics[width=3cm]{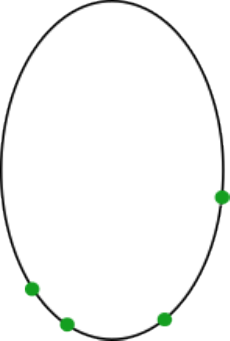}};
	\node at (1.5,2) {$\im \psi$};
	\node at (1.8,-.5) {$a_4$};
	\node at (1,-2.2) {$a_3$};
	\node at (-.6,-2.4) {$a_2$};
	\node at (-1.5,-1.6) {$a_1$};
	\end{tikzpicture}
\caption{Veronese factorization structure}
\label{3b}
\end{subfigure}
\caption{}
\end{figure}
In fact, every 2-dimensional polytope with 4 edges is compatible with the Veronese factorization structure.
Indeed, proceeding as in \Cref{segre polyotpes}, we view the polytope as a section of a cone $\sigma$, which, through the extremal rays of $\sigma^\vee$, gives four lines in 3-dimensional vector space, and hence four points in $\mathbb{P}^2$. As any five points in general position determine the rational normal curve of degree 2, there is a 1-parametric family of such curves (conics) fitting these four points, and hence for any member of this family, the cone $\sigma$ has its projectivised normals on factorization curves. Therefore, the polytope is compatible with a Veronese factorization structure.
\end{example}

A strategy for constructing compatible cones and polytopes is to start with finitely many points on factorization curves and de-projectivise them, which determines a cone $\sigma^\vee$ whose dual $\sigma$ is a compatible cone by construction, provided it is full-dimensional and pointed. To clarify how to do this rigorously we continue with the following general observations, which, once restricted to our setting, provide a construction of compatible cones and polytopes. \\

Note that a cone $\sigma\subset\mathfrak{h}^*$ with $n$ facets determines $n$ points in $\mathbb{P}(\mathfrak{h})$ by projectivising extremal rays of $\sigma^\vee$. However, not every choice of an affine chart realises points in $\mathbb{P}(\mathfrak{h})$ as generators of extremal rays of a cone. In general, we have the following.\par 
A finite collection of points $p_1,\ldots,p_n\in\mathbb{P}(\mathfrak{h})$ belongs to the domain of the affine chart given by $\epsilon\in \mathfrak{h}^*$ if and only if $\epsilon$ does not belong into the proper and closed set $\cup_{j=1}^n (p_j)^0$, where $(p_j)^0\subset \mathfrak{h}^*$ is the annihilator of $p_j\subset \mathfrak{h}$. The set $\cup_{j=1}^n (p_j)^0$ is a hyperplane arrangement in $\mathfrak{h}^*$ splitting it into a union of full-dimensional convex polyhedral cones which, in general, do not have $n$ facets. Observe that for $\sigma$ such a cone, bounded by $(p_{i_1})^0,\ldots,(p_{i_r})^0$, $r\leq n$, all lines $p_1,\ldots,p_n$ contain rays $p_1^+,\ldots,p_n^+$ which belong to $\sigma^\vee$, since all functionals $\alpha\in\sigma$ evaluate non-negatively on them, but the only extremal rays of $\sigma^\vee$ are $p_{i_1}^+,\ldots,p_{i_r}^+$.
By construction, the projectivised normals of $\sigma$ are $p_{i_1},\ldots,p_{i_r} \in \mathbb{P}(\mathfrak{h})$.

\begin{corollary}\label[corollary]{cor_examples}
Let $p_1,\ldots,p_n\in\mathbb{P}(\mathfrak{h})$, $n \geq \dim\mathfrak{h}$. If there exist $\epsilon\in \mathfrak{h}^*$ such that the image of $p_1,\ldots,p_n$ in the affine chart $\epsilon$ generate extremal rays of a full-dimensional cone $\sigma^\vee$, then its dual cone $\sigma$ is a full-dimensional and pointed cone with $n$ facets. The interior $\text{Int}(\sigma^\vee)$ parametrises affine hyperplane sections intersecting $\sigma$ in a convex polytope with $n$ facets.
\end{corollary}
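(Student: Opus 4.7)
The plan is to deduce each assertion from \Cref{pointed lemma} together with the standard duality between facets of a pointed convex polyhedral cone and extremal rays of its dual, both of which have been set up earlier in the text. Throughout, I will pick representatives $\tilde p_i\in p_i\subset\mathfrak h$ with $\langle\epsilon,\tilde p_i\rangle=1$ so that $\sigma^\vee=\text{cone}(\tilde p_1,\ldots,\tilde p_n)$.

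First I would establish that $\sigma$ is pointed and full-dimensional. Since $\sigma^\vee$ is full-dimensional in $\mathfrak h$ by hypothesis, \Cref{pointed lemma}(i) applied to $\sigma=(\sigma^\vee)^\vee$ immediately yields pointedness of $\sigma$. For full-dimensionality of $\sigma$, I observe that $\sigma^\vee$ itself is pointed: all generators $\tilde p_i$ lie in the open half-space $\{\epsilon>0\}$, so $\sigma^\vee\cap(-\sigma^\vee)=\{0\}$. Applying \Cref{pointed lemma}(i) this time to $\sigma^\vee$, and using $(\sigma^\vee)^\vee=\sigma$, shows $\dim\sigma=\dim\mathfrak h^*$.

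For the count of $n$ facets, I would invoke the bijection between extremal rays of a pointed convex polyhedral cone and facets of its dual, recalled explicitly in the paragraph preceding the corollary. The hypothesis provides that the $\tilde p_i$ generate $n$ distinct extremal rays of $\sigma^\vee$, so $\sigma$ has exactly $n$ facets, one per $\tilde p_i$.

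Finally, for the last assertion, I would invoke \Cref{pointed lemma}(ii) for the pointed cone $\sigma$: affine charts $\epsilon'\in\mathfrak h$ whose level set $\{\epsilon'=1\}$ intersects $\sigma$ in a polytope are exactly those with $\epsilon'\in\text{Int}(\sigma^\vee)$, which is non-empty since $\sigma^\vee$ is full-dimensional. That each such section carries $n$ facets follows by transversality: for $\epsilon'\in\text{Int}(\sigma^\vee)$, $\epsilon'$ pairs strictly positively with every nonzero element of $\sigma^\vee$, so the hyperplane $\{\epsilon'=1\}$ crosses each extremal ray of $\sigma^\vee$ transversely and therefore cuts every facet of $\sigma$ in a facet of the polytope. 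The only mildly subtle point, confirming the $n$ rays remain distinct and are precisely the extremal rays of $\sigma^\vee$, is encoded directly in the hypothesis "generate extremal rays", which rules out any $p_i$ lying in the relative interior or duplicating another. No genuine obstacle is expected: the corollary is essentially a repackaging of \Cref{pointed lemma} together with cone-facet duality.
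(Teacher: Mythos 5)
Your proposal is correct and follows essentially the same route the paper intends: the corollary is left without an explicit proof there, being a direct consequence of \Cref{pointed lemma} and the extremal-ray/facet duality recalled in the preceding paragraphs, which is exactly what you assemble. The only extra care you add (checking that $\sigma^\vee$ is pointed via $\{\epsilon>0\}$ to get full-dimensionality of $\sigma$, and noting the sections stay combinatorially constant over $\text{Int}(\sigma^\vee)$) is consistent with the paper's setup.
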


As we are also interested in compatible polytopes which are simple polytopes, we recall relevant definitions and reflect them into associated cones. An $m$-dimensional polytope is \textit{simple} if exactly $m$ facets are incident with each of its vertices, and \textit{simplicial} if its every facet is a simplex.
Observe that an $m$-dimensional simple polytope arises as a section of a full-dimensional and pointed cone $\sigma$ whose extremal rays are intersections of exactly $m$ facets.
Dually, each facet of $\sigma^\vee$ contains exactly $m$ extremal rays, and these are linearly independent.
Equivalently, every compact slice of $\sigma^\vee$ has simplices as faces, thus $\sigma^\vee$ is a cone over a simplicial polytope.
Thus, to determine if a cone is a cone over a simplicial polytope means to know how many edges lie on facets.

\begin{thm}\label{Ver cone is simplicial}
A cone compatible with the Veronese factorization structure is a cone over a simplicial polytope.
\end{thm}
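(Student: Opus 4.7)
The plan is to show that every facet $F$ of the cone $\sigma$ is a simplicial cone, i.e., generated by exactly $m$ linearly independent extremal rays of $\sigma$, where $m+1 = \dim \mathfrak{h}$. By the discussion preceding the theorem, this characterises $\sigma$ as a cone over a simplicial polytope.

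Recall the setup: the Veronese factorization structure $\varphi : S^mW^* \hookrightarrow (W^*)^{\otimes m}$ has $\mathfrak{h} = S^mW^*$ of dimension $m+1$, and all its factorization curves coincide in a single curve $\psi : \mathbb{P}(W) \to \mathbb{P}(\mathfrak{h})$ of degree $m$, the rational normal curve (see \Cref{SVfs curves} and the discussion following it). Compatibility of $\sigma$ in the sense of \Cref{compatible polytope} means that each of its extremal rays projectivises to a point of $\im \psi$.

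Let $F$ be a facet of $\sigma$, and let $H \subset \mathfrak{h}$ be the hyperplane spanned by $F$. On one hand, since $F$ is $m$-dimensional, it is generated as a cone inside $H$ by at least $m$ linearly independent extremal rays of $\sigma$; these project to an equal number of pairwise distinct points of $\mathbb{P}(H) \cap \im \psi$. On the other hand, \Cref{fc rnc} asserts that any $m+1$ pairwise distinct points of $\im \psi$ are linearly independent, so the projective hyperplane $\mathbb{P}(H) \subset \mathbb{P}(\mathfrak{h})$ cannot contain more than $m$ distinct points of $\im \psi$. Consequently $F$ contains exactly $m$ extremal rays of $\sigma$, and a second application of \Cref{fc rnc} to these $m$ points on the rational normal curve confirms that they are linearly independent. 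Hence every facet of $\sigma$ is simplicial, which completes the proof.

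No serious obstacle is anticipated: the whole argument is a short combination of compatibility with the two complementary roles of \Cref{fc rnc}, one giving the Bézout-type upper bound on $|\mathbb{P}(H) \cap \im \psi|$, the other guaranteeing that the $m$ extremal rays so obtained span $H$.
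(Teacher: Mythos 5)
Your proof is correct and follows essentially the same route as the paper: both arguments reduce to the fact that a hyperplane meets the degree-$m$ rational normal curve in at most $m$ points (which you correctly extract from \Cref{fc rnc}), so a facet-supporting hyperplane contains exactly $m$ extremal rays and these are linearly independent.
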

\begin{proof}
Say that the $n$ extremal rays of our cone lie on 1-dimensional spaces
\begin{align}\label{Ver is simplicial}
\psi(t_i),\hspace{.2cm} i=1,\ldots,n,
\end{align}
where $\psi=\psi_1=\cdots=\psi_m$ denotes the factorization curve, and  $n\geq m+1$ since the cone is full-dimensional (see \Cref{fc rnc}). We need to show that a facet-supporting hyperplane, which is generally defined by $m$ extremal rays and which in our case lie on 1-dimensional spaces \eqref{Ver is simplicial}, does not contain any other extremal rays. This is clearly true since any hyperplane intersects the degree $m$ curve $\psi$ in at most $m$ points. Thus, it is a cone over a simplicial polytope.
\end{proof}

In the rest of this subsection we describe hyperplanes and higher codimension spaces where facets and faces of a compatible cone and its dual lie. They have a particularly nice form characterised as $\varphi^t$-images of intersections of spaces $\Sigma_{j,\ell}$, see \Cref{faces} below.\par
We start with finding the hyperplane in $\mathfrak{h}^*$ corresponding to (a projectivised normal) $\psi_j(\ell) \in \mathbb{P}(\mathfrak{h})$. Note that since (see \Cref{ell_k^0 fixed})
\begin{align}\label{fs prop example}
\varphi\circ\psi_j(\ell)\subset \varphi(\mathfrak{h}) \cap \Sigma_{j,\ell}^0,
\end{align}
we have
\begin{align}
0=
\langle \Sigma_{j,\ell}, \varphi\circ\psi_j(\ell) \rangle =
\langle \varphi^t \Sigma_{j,\ell}, \psi_j(\ell) \rangle,
\end{align}
and if $v\in\mathfrak{h}$ annihilates $\varphi^t \Sigma_{j,\ell}\subset\mathfrak{h}^*$, then
\begin{align}
0=
\langle \varphi^t \Sigma_{j,\ell}, v \rangle =
\langle \Sigma_{j,\ell}, \varphi v \rangle,
\end{align}
i.e., $\varphi(v)\in\varphi(\mathfrak{h})\cap\Sigma_{j,\ell}^0$. Therefore, $\varphi^t \Sigma_{j,\ell}\subset\mathfrak{h}^*$ is a hyperplane with the annihilator $\psi_j(\ell) \subset \mathfrak{h}$ if and only if $\dim \left( \varphi(\mathfrak{h})\cap\Sigma_{j,\ell}^0 \right) = 1$. In other words, $\varphi^t\Sigma_{j,\ell}$ is a hyperplane if and only if $\psi_j(\ell)$ does not lie on any other curve. In general, we have

\begin{thm}\label{faces}
For $r\in\{1,\ldots,m\}$ and pairwise distinct $i_1,\ldots,i_r\in\{1,\ldots,m\}$, the space
\begin{align}\label{codim r space}
\varphi^t \left(  \Sigma_{i_1,\ell_1} \cap \cdots \cap \Sigma_{i_r,\ell_r} \right)
\end{align}
is of codimension $r$ for generic choices of $\ell_j \in \mathbb{P}(V_{i_j})$, $j=1,\ldots,r$. Furthermore, for $\ell_j$, $j=1,\ldots,r$, such that \eqref{codim r space} is of codimension $r$, if hyperplanes $\varphi^t ( \Sigma_{i_1,\ell_1} ), \cdots, \varphi^t ( \Sigma_{i_r,\ell_r} )$ are independent, then
\begin{align}\label{face equation}
\varphi^t \left(  \Sigma_{i_1,\ell_1} \cap \cdots \cap \Sigma_{i_r,\ell_r} \right) =
\varphi^t ( \Sigma_{i_1,\ell_1} ) \cap \cdots \cap \varphi^t ( \Sigma_{i_r,\ell_r} ).
\end{align}
\end{thm}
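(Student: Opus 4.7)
The plan is to prove Part 1 by induction on $r$, via the quotient factorization structures of \Cref{fs quotient thm}, and deduce Part 2 by an elementary dimension count. Fix $\ell_1 \in A_{i_1}$ with a nonzero $v_1 \in \ell_1$, so that the quotient $\varphi_{i_1,v_1}:\mathfrak{h}_{i_1,\ell_1} \to \hat V_{i_1}^*$ is a factorization structure of dimension $m-1$. Transposing the commutative diagram \eqref{quotient} yields the identity $\varphi^t \circ \rho_{i_1,v_1}^t = P_{i_1,v_1}^t \circ \varphi_{i_1,v_1}^t$. A direct computation identifies $\rho_{i_1,v_1}^t : \hat V_{i_1} \hookrightarrow V$ as the inclusion placing $v_1$ in the $i_1$-slot, so $\rho_{i_1,v_1}^t(\hat V_{i_1}) = \Sigma_{i_1,\ell_1}$, and more generally $\rho_{i_1,v_1}^t$ carries the analog of $\Sigma_{i_2,\ell_2} \cap \cdots \cap \Sigma_{i_r,\ell_r}$ in $\hat V_{i_1}$ bijectively onto $\Sigma_{i_1,\ell_1} \cap \Sigma_{i_2,\ell_2} \cap \cdots \cap \Sigma_{i_r,\ell_r}$ in $V$.

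For the induction I would prove simultaneously that $\dim \varphi^t(\Sigma_{i_1,\ell_1} \cap \cdots \cap \Sigma_{i_r,\ell_r}) = m - r + 1$ and that the $r$ vectors $\psi_{i_1}(\ell_1), \ldots, \psi_{i_r}(\ell_r)$ are linearly independent in $\mathfrak{h}$. The base case $r = 1$ is trivial: codimension $1$ follows from the definition of a factorization structure, and a nonzero vector is trivially independent. For the inductive step, apply the hypothesis to the quotient factorization structure $\varphi_{i_1,v_1}$ in the remaining slots $i_2, \ldots, i_r$. Since $P_{i_1,v_1}$ is surjective, $P_{i_1,v_1}^t$ is injective and dimension-preserving, so the key identity above transports the inductive codimension $r-1$ in $\mathfrak{h}_{i_1,\ell_1}^*$ to codimension $r$ in $\mathfrak{h}^*$. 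Linear independence descends by projecting a relation $\sum_j a_j \psi_{i_j}(\ell_j) = 0$ via $P_{i_1,v_1}$: using $P_{i_1,v_1} \circ \psi_{i_j} = \psi_{i_j}^{i_1,\ell_1}$ for $j \geq 2$ from \eqref{iso on curves} and $P_{i_1,v_1}(\psi_{i_1}(\ell_1)) = 0$, the inductive independence in the quotient forces $a_j = 0$ for $j \geq 2$, and the surviving $a_1 \psi_{i_1}(\ell_1) = 0$ gives $a_1 = 0$.

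Both conclusions are Zariski open on the product $\mathbb{P}(V_{i_1}) \times \cdots \times \mathbb{P}(V_{i_r})$: linear independence is the non-vanishing of a determinant, and the dimension of the image is lower-semicontinuous (writing it as $2^{m-r} - \dim(\Sigma_{i_1,\ell_1} \cap \cdots \cap \Sigma_{i_r,\ell_r} \cap \ker \varphi^t)$ and invoking the Schubert-type argument of \Cref{Schubert rem}). Their intersection, being a non-empty open subset of an irreducible variety, is Zariski dense, establishing Part 1 generically. For Part 2, the set-theoretic inclusion $\varphi^t(\Sigma_{i_1,\ell_1} \cap \cdots \cap \Sigma_{i_r,\ell_r}) \subset \varphi^t(\Sigma_{i_1,\ell_1}) \cap \cdots \cap \varphi^t(\Sigma_{i_r,\ell_r})$ is immediate from linearity; the assumed independence of the $r$ hyperplanes forces the right-hand side to have codimension exactly $r$, matching the codimension of the left-hand side given by Part 1, and an inclusion of subspaces of equal codimension is an equality.

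The main obstacle is the simultaneous induction, which requires using the quotient not only to reduce dimension but also to descend linear relations among the normal vectors; once the transposed diagram and the identification of $\rho_{i_1,v_1}^t$ as the slot-insertion map are in place, the remaining work is essentially bookkeeping.
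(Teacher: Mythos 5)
Your proof is correct, and it reaches the paper's conclusion by what is at bottom the same mechanism --- iterated quotient factorization structures via \Cref{fs quotient thm} --- but organized dually and inductively rather than as an explicit dimension chain. The paper never transposes the diagram \eqref{quotient}; instead it works entirely on the annihilator side, computing $\dim\bigl(\varphi(\mathfrak{h}) \cap (\Sigma_{i_1,\ell_1}^0 + \cdots + \Sigma_{i_r,\ell_r}^0)\bigr)$ through a chain of rank--nullity identities for the contractions $\rho_{i_j,\ell_j}$, which unrolls your induction into one closed formula relating that dimension to the image of $\varphi(\mathfrak{h})$ under the composite contraction. Your observation that $\rho_{i_1,v_1}^t$ is the slot-insertion map with image $\Sigma_{i_1,\ell_1}$, so that $\varphi^t(\Sigma_{i_1,\ell_1}\cap\cdots\cap\Sigma_{i_r,\ell_r}) = P_{i_1,v_1}^t\circ\varphi_{i_1,v_1}^t(\hat\Sigma_{i_2,\ell_2}\cap\cdots)$ with $P_{i_1,v_1}^t$ injective, is a clean substitute for that computation and makes the reduction to dimension $m-1$ transparent. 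The simultaneous induction on linear independence of $\psi_{i_1}(\ell_1),\ldots,\psi_{i_r}(\ell_r)$ is correct but not needed for the theorem as stated, since Part 2 takes independence of the hyperplanes as a hypothesis; the paper omits it.

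One point to tighten: lower semicontinuity of $\ell\mapsto\dim\varphi^t(\Sigma_{i_1,\ell_1}\cap\cdots\cap\Sigma_{i_r,\ell_r})$ only makes the locus $\{\operatorname{codim}\le r\}$ open, not the locus $\{\operatorname{codim}=r\}$, so your sentence ``both conclusions are Zariski open'' is not immediate. The fix is already implicit in your construction: the inductively produced set $S=\bigcup_{\ell_1\in A_{i_1}}\{\ell_1\}\times U(\ell_1)$ on which the codimension equals $r$ is Zariski dense in the irreducible product $\mathbb{P}(V_{i_1})\times\cdots\times\mathbb{P}(V_{i_r})$, and the closed locus $\{\operatorname{codim}\ge r\}$ contains $S$, hence is everything; therefore $\{\operatorname{codim}=r\}$ coincides with the open locus $\{\operatorname{codim}\le r\}$ and is non-empty. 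With that one sentence added, the argument is complete.
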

In particular, as \Cref{fc rnc} shows, such hyperplanes are always independent in case of Veronese factorization structure and generically independent for product Segre-Veronese factorization structure.
\begin{proof}
Set-theoretically, we always have that the space \eqref{codim r space} lies in the intersection of the hyperplanes. The independence of hyperplanes implies that they intersect in a codimension $r$ space. Thus, showing that \eqref{codim r space} is of codimension $r$ proves \eqref{face equation}. To this end, we prove that its annihilator,
\begin{align}\label{larger}
\varphi
\left(
\left(
\varphi^t \left(  \Sigma_{i_1,\ell_1} \cap \cdots \cap \Sigma_{i_r,\ell_r} \right)
\right)^0
\right)
=
\varphi(\mathfrak{h}) \cap
\left( \Sigma_{i_1,\ell_1}^0 + \cdots + \Sigma_{i_r,\ell_r}^0 \right),
\end{align}
is $r$-dimensional.\par
By combining
\begin{align}\nonumber
\dim
\left(
\varphi(\mathfrak{h}) +
\Sigma_{i_1,\ell_1}^0 + \cdots + \Sigma_{i_r,\ell_r}^0
\right)
=&
\dim
\left(
\varphi(\mathfrak{h}) + \Sigma_{i_2,\ell_2}^0 + \cdots + \Sigma_{i_r,\ell_r}^0
\right)
+
\dim
\left( \Sigma_{i_1,\ell_1}^0 \right)\\
&-
\dim
\left(
\left( \varphi(\mathfrak{h}) + \Sigma_{i_2,\ell_2}^0 + \cdots + \Sigma_{i_r,\ell_r}^0 \right) \cap \Sigma_{i_1,\ell_1}^0
\right),
\end{align}
and (rank-nullity theorem)
\begin{align}\nonumber
\dim
\left(
\rho_{i_1,\ell_{i_1}}
\left(
\varphi(\mathfrak{h}) + \Sigma_{i_2,\ell_2}^0 + \cdots + \Sigma_{i_r,\ell_r}^0
\right)
\right)
=&
\dim
\left(
\varphi(\mathfrak{h}) + \Sigma_{i_2,\ell_2}^0 + \cdots + \Sigma_{i_r,\ell_r}^0
\right)\\
&-
\dim
\left(
\left( \varphi(\mathfrak{h}) + \Sigma_{i_2,\ell_2}^0 + \cdots + \Sigma_{i_r,\ell_r}^0 \right) \cap \Sigma_{i_1,\ell_1}^0
\right),
\end{align}
we arrive at
\begin{align}
\dim
\left(
\varphi(\mathfrak{h}) +
\Sigma_{i_1,\ell_1}^0 + \cdots + \Sigma_{i_r,\ell_r}^0
\right)
=&
\dim \left( \Sigma_{i_1,\ell_1}^0 \right)+
\dim
\left(
\rho_{i_1,\ell_{i_1}}
\left(
\varphi(\mathfrak{h}) + \Sigma_{i_2,\ell_2}^0 + \cdots + \Sigma_{i_r,\ell_r}^0
\right)
\right),
\end{align}
where $\rho_{i_1,\ell_{i_1}}$ represents the contraction $\rho_{i_1,v}$ for some/any $v \in \ell_{i_1}$.
Similar abbreviations are used in the following.
Repeating the above $(r-2)$-times yields
\begin{align}\nonumber
\dim
\left(
\varphi(\mathfrak{h}) +
\Sigma_{i_1,\ell_1}^0 + \cdots + \Sigma_{i_r,\ell_r}^0
\right)
=&
\dim \left( \Sigma_{i_1,\ell_1}^0 \right)
+
\sum_{j=1}^{r-2}
\dim
\left(
\rho_{i_j,\ell_j} \circ \cdots \circ \rho_{i_1,\ell_1} \Sigma_{i_{j+1},\ell_{j+1}}^0
\right)\\\label{id:final}
&+
\dim
\left(
\rho_{i_{r-1},\ell_{r-1}} \circ \cdots \circ \rho_{i_1,\ell_1}
\left(
\varphi(\mathfrak{h}) + \Sigma_{r,\ell_r}^0
\right)
\right),
\end{align}
which is valid for $\varphi(\mathfrak{h})=0$ too. Finally, inserting \eqref{id:final} with $\varphi(\mathfrak{h})=0$ and \eqref{id:final} itself into
\begin{align}\nonumber
\dim
\left(
\varphi(\mathfrak{h}) +
\Sigma_{i_1,\ell_1}^0 + \cdots + \Sigma_{i_r,\ell_r}^0
\right)
=&
\dim \left( \varphi(\mathfrak{h}) \right)
+
\dim
\left(
\Sigma_{i_1,\ell_1}^0 + \cdots + \Sigma_{i_r,\ell_r}^0
\right)\\\label{id:default}
&-
\dim
\left(
\varphi(\mathfrak{h}) \cap
\left( \Sigma_{i_1,\ell_1}^0 + \cdots + \Sigma_{i_r,\ell_r}^0 \right)
\right).
\end{align}
provides the final formula
\begin{gather}
\dim
\left(
\rho_{i_{r-1},\ell_{r-1}} \circ \cdots \circ \rho_{i_1,\ell_1}
\left(
\varphi(\mathfrak{h})
\right)
\right) \label{first line}\\
-
\dim
\left(
\rho_{i_{r-1},\ell_{r-1}} \circ \cdots \circ \rho_{i_1,\ell_1}
\left(
\varphi(\mathfrak{h})
\right) 
\cap
\rho_{i_{r-1},\ell_{r-1}} \circ \cdots \circ \rho_{i_1,\ell_1}
\left(
\Sigma_{r,\ell_r}^0
\right) 
\right)
=\label{second line} \\
\dim \left( \varphi(\mathfrak{h}) \right)
-
\dim
\left(
\varphi(\mathfrak{h}) \cap
\left( \Sigma_{i_1,\ell_1}^0 + \cdots + \Sigma_{i_r,\ell_r}^0 \right)
\right).
\end{gather}
Now, we use \Cref{fs quotient thm} to find an open non-empty $A_{i_1} \subset \mathbb{P}(V_{i_1})$ where the quotient $\varphi_{i_1,v}$ of $\varphi$ is a factorization structure for any non-zero $v \in \lambda$, $\lambda \in A_{i_1}$, then to find an open non-empty $A_{i_2} \subset \mathbb{P}(V_{i_2})$ where the quotient $(\varphi_{i_1,v})_{i_2,w}$ of $\varphi_{i_1,v}$ is a factorization structure for any $w \in \mu$, $\mu \in A_{i_2}$, etc.
Thus, for $(\ell_1,\ldots,l_r) \in A_{i_1} \times \cdots \times A_{i_r}$, i.e., generic $\ell_j \in \mathbb{P}(V_{i_j})$, $j=1,\ldots,r$, we find that \eqref{first line} is $m+1-(r-1)$, and \eqref{second line} is 1. Thus,
\begin{align}
\dim
\left(
\varphi(\mathfrak{h}) \cap
\left( \Sigma_{i_1,\ell_1}^0 + \cdots + \Sigma_{i_r,\ell_r}^0 \right)
\right)
=
r
\end{align}
as claimed.
\end{proof}

The above suggests 

\begin{corollary}\label[corollary]{face structure}
Generically, $\varphi^t \ell_1\otimes\cdots\otimes\ell_m \subset \mathfrak{h}^*$ determines a hyperplane through $\psi_j(\ell_j)$, $j=1,\ldots,m$.
\end{corollary}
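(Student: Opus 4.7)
The plan is to split the claim into two independent parts: (i) for generic $(\ell_1, \ldots, \ell_m) \in \mathbb{P}(V_1) \times \cdots \times \mathbb{P}(V_m)$, the subspace $\varphi^t(\ell_1 \otimes \cdots \otimes \ell_m)$ is $1$-dimensional in $\mathfrak{h}^*$ and therefore its annihilator $H \subset \mathfrak{h}$ is a genuine hyperplane; and (ii) each $\psi_j(\ell_j)$ lies in $H$, a statement which in fact holds without any genericity assumption. Part (i) is the dimensional input, which will be bought for free from the codimension count already proved; part (ii) is a short slot-wise pairing calculation that exploits the rigidity of factorization curves.

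For part (i), the natural move is to apply \Cref{faces} with $r = m$ and $(i_1, \ldots, i_m) = (1, \ldots, m)$: the intersection in \eqref{codim r space} collapses to
\begin{equation*}
\Sigma_{1, \ell_1} \cap \cdots \cap \Sigma_{m, \ell_m} = \ell_1 \otimes \cdots \otimes \ell_m,
\end{equation*}
so the theorem's generic codimension-$m$ statement, read in $\mathfrak{h}^*$ of dimension $m+1$, is precisely the assertion that $\varphi^t(\ell_1 \otimes \cdots \otimes \ell_m)$ is $1$-dimensional, and thus defines a hyperplane $H \subset \mathfrak{h}$ as its annihilator.

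For part (ii), I fix $j$ and use \Cref{ell_k^0 fixed} to write $\varphi \circ \psi_j(\ell_j) = \ell_j^0 \otimes T_j$ for some $T_j \in \hat{V}_j^*$, with $\ell_j^0 \subset V_j^*$ sitting in the $j$th slot. The adjunction combined with the slot-wise structure of the tensor pairing then gives
\begin{equation*}
\langle \varphi^t(\ell_1 \otimes \cdots \otimes \ell_m),\ \psi_j(\ell_j) \rangle = \langle \ell_1 \otimes \cdots \otimes \ell_m,\ \ell_j^0 \otimes T_j \rangle,
\end{equation*}
whose $j$th-slot factor is $\langle \ell_j, \ell_j^0 \rangle = 0$, so the whole expression vanishes and $\psi_j(\ell_j) \in H$. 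The only delicate step here is (i): ensuring $\varphi^t$ does not vanish on the chosen decomposable tensor, for otherwise no hyperplane would be determined. This is exactly the non-trivial generic codimension content of \Cref{faces}, and once that result is invoked no further obstacle remains.
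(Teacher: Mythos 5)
Your proposal is correct and matches the paper's proof essentially step for step: both invoke \Cref{faces} (with $r=m$) to get that $\varphi^t(\ell_1\otimes\cdots\otimes\ell_m)$ is generically $1$-dimensional, and both then conclude via the adjunction $\langle \varphi^t(\ell_1\otimes\cdots\otimes\ell_m), \psi_j(\ell_j)\rangle = \langle \ell_1\otimes\cdots\otimes\ell_m, \varphi\circ\psi_j(\ell_j)\rangle = 0$, using that $\varphi\circ\psi_j(\ell_j)$ carries $\ell_j^0$ in the $j$th slot (\Cref{ell_k^0 fixed}).
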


\begin{proof}
\Cref{faces} shows that $\varphi^t \ell_1\otimes\cdots\otimes\ell_m$ is generically 1-dimensional. Since $\varphi\circ\psi_j(\ell_j)$ has $\ell_j^0$ at the $j$-th slot, the computation
\begin{equation}
\left\langle  \varphi^t \ell_1\otimes\cdots\otimes\ell_m, \psi_j(\ell_j) \right\rangle =
\left\langle  \ell_1\otimes\cdots\otimes\ell_m, \varphi \circ \psi_j(\ell_j) \right\rangle =
0
\end{equation}
gives the claim.
\end{proof}

Note that if a cone $\sigma^\vee$ has extremal rays lying on $\psi_j(\tau_{ji})$, then extremal rays of $\sigma$ lie on $\varphi^t \ell_1\otimes\cdots\otimes\ell_m$ for some $\ell_r\in\{\tau_{ji}\}_{j,i}$.

\subsection{Cones compatible with the product Segre-Veronese factorization structure}\label{s31}

This section demonstrates the construction of compatible cones through an example of a cone compatible with the product Segre-Veronese factorization structure. Furthermore, a condition for determining its facets is given which generalises the Gale evenness condition from cyclic polytopes to this broader setting.
More details can be found in \cite{brandenburg2024} where an extensive theory for the case of the Veronese factorization structure was already developed. \\

Recall the $m$-dimensional product Segre-Veronese factorization structure 
\begin{align}\label{1}
\mathfrak{h}:=
\sum_{j=1}^{k}
ins_j
\left(
S^{d_j}W_j^*\otimes\Gamma_j
\right)
\xhookrightarrow{\hspace{.5cm} \varphi \hspace{.5cm}}
\bigotimes_{j=1}^k (W_j^*)^{\otimes d_j}
=: V^*,
\end{align}
where
\begin{align}\label{2}
\Gamma_j=
\bigotimes_{\substack{r=1\\r\neq j}}^k (a^r)^{\otimes d_r}
\end{align}
for some 1-dimensional subspaces $a^r\subset W_r^*$, $r=1,\ldots,k$, $d_1+\cdots+d_k = m$, and $\otimes_{r=1}^k (a^r)^{\otimes d_r}$ is called \textit{the intersection point}.
Its distinct factorization curves are given by
\begin{align}\label{fc_examples}
\varphi\circ\psi_j(\ell)=
ins_j
\left(
(\ell^0)^{\otimes d_j} \otimes \Gamma_j
\right),
\end{align}
$j=1,\ldots,k$ (see \Cref{segre lines example} - \Cref{SVfs curves} for more details).

Consider points on factorization curves of the product Segre-Veronese factorization structure,
\begin{align}\label{edges}
\psi_j(\tau_{ji}) \in \mathbb{P}(\mathfrak{h}),
\hspace{.2cm}
i=1,\ldots,c_j,
\hspace{.2cm}
j=1,\ldots,k,
\end{align}
for some $c_j$'s, where $\tau_{ji}$'s are pair-wise distinct, $\tau_{ji} \in \mathbb{P}(W_j)$. We fix a chart, and declare \eqref{edges} in this chart as generators of the cone, which is therefore pointed. Assuming that these generators generate extremal rays of the cone, a necessary and sufficient condition for its full-dimensionality follows from \Cref{fc rnc}. The cone is full-dimensional if and only if there exists $j_0\in\{1,\ldots,k\}$ such that $c_{j_0}>d_{j_0}$ and $c_j\geq d_j$ for $j\neq j_0$. However, as noted above, not every affine chart realises \eqref{edges} as generators of extremal rays. Regardless, the condition for determining facets is applicable as shown below. \\

We express images of \eqref{edges} in an affine chart chosen below. To do so we fix dual bases of $W_r$ and $W^*_r$ such that the first basis vector $\underline{a}^r$ of $W_r^*$ lies on $a^r$ (see \eqref{1} and \eqref{2} for notation). These provide coordinates $t_{ji}$ for $\tau_{ji}$, $t_{ji}=\tau_{ji}/\langle \tau_{ji}, \underline{a}^j \rangle$, and a basis $\epsilon_0,\epsilon_{ji}$, $i=1,\ldots,d_j$, $j=1,\ldots,k$, of $\mathfrak{h}$, uniquely characterised by 
\begin{align}\label{basis}
\varphi\epsilon_0 &= \otimes_{r=1}^k(\underline{a}^r)^{\otimes d_r},\nonumber\\
\varphi\epsilon_{ji} &= \text{ins}_j \left( \varepsilon_{ji} \otimes \bigotimes_{\substack{r=1\\r\neq j}}^k (\underline{a}^r)^{\otimes d_r} \right),
\end{align}
where $\varepsilon_{ji}$ together with $(\underline{a}^j)^{\otimes d_j}$ denote the standard basis for symmetric tensors $S^{d_j}W_j^*$.
Indeed, $\epsilon_0$ together with $\epsilon_{ji}$, $i=1,\ldots,d_j$, form a basis of $ins_j \left( S^{d_j} W_j^* \otimes \bigotimes_{\substack{r=1 \\ r \neq j}}^k (a^r)^{\otimes d_r} \right)$, (see also \Cref{SVfs curves} and \Cref{one intersection example}). \par
We define the affine chart $\epsilon\in\mathfrak{h}^*$ by $\epsilon = \varphi^t \varepsilon \in \mathfrak{h}^*$, where
\begin{align}\label{epsilon}
\varepsilon:=
\sum_{j=1}^k
\text{ins}_j \left( (0,-1)^{\otimes d_j} \otimes (1,0)^{\otimes (m-d_j)} \right) \in V.
\end{align}
The only point of any $\psi_j$, $j=1,\ldots,k$, which is not in this chart is the intersection point. Note that this remains true if for any $j\in\{1,\ldots,k\}$, the $(1,0)^{\otimes (m-d_j)}$-part of \eqref{epsilon} is replaced by a tensor from
$\otimes_{\substack{i=1\\ i\neq j}}^k \left( W_i \right)^{\otimes d_i}$
which does not belong to the annihilator of
$(1,0)^{\otimes (m-d_j)} \in \otimes_{\substack{i=1\\ i\neq j}}^k \left( W_i^* \right)^{\otimes d_i}$.

In this chart and coordinates, $\psi_j([1:x])$ can be found explicitly,
\begin{align}
\frac{\varphi \circ \psi_j([1:x])}{\langle \varphi \circ \psi_j([1:x]), \varepsilon \rangle} =
\text{ins}_j
\left( (x,-1)^{\otimes d_j} \otimes (1,0)^{\otimes (m-d_j)} \right)
\end{align}
and thus
\begin{align}\label{psi in a chart}
\frac{\psi_j([1:x])}{\langle \psi_j([1:x]), \epsilon \rangle} =
x^{d_j}\epsilon_0 +
\sum_{i=1}^{d_j} (-1)^i x^{d_j-i}\epsilon_{ji}.
\end{align}
Evaluating \eqref{psi in a chart} at $x=t_{ji}$, we obtain images of \eqref{edges} in the chart $\epsilon$, i.e., the vectors generating the cone $\sigma^\vee$,
\begin{gather}\label{cone}
\sigma^\vee =
\text{cone} \left( \left(t_{ji}^{d_j}, -t_{ji}^{d_j-1},\ldots,(-1)^{d_j-1} t_{ji}, (-1)^{d_j} \right) \bigg| \hspace{.1cm} i=1,\ldots,c_i, \hspace{.2cm} j=1,\ldots,k \right).
\end{gather}

To describe facet-supporting hyperplanes of $\sigma^\vee$, we note that each such is in particular a hyperplane through $m$ linearly independent extremal rays. First, we classify hyperplanes through $m$ linearly independent 1-dimensional spaces lying on factorization curves, which allows us to see which collections of points from \eqref{edges} give rise to a hyperplane, and then we derive a condition for deciding which of these are facet-supporting hyperplanes.

\begin{proposition}\label[proposition]{hyperplanes}
Let $S$ be a set of $m$ linearly independent points lying on factorization curves of the product Segre-Veronese factorization structure.
Then, one of the following two is satisfied.
\begin{enumerate}
\item For each $j=1,\ldots,k$, the cardinality $|S \cap \im \psi_j|$ is exactly $d_j$. Then, parametrising the points as $\psi_j([1:x_{ji}])$, $i=1,\ldots,d_j$, $j=1,\ldots,k$, we obtain normal vectors of the associated hyperplane,
\begin{align}\label{normals}
c \cdot \varphi^t \left( \otimes_{j=1}^k\otimes_{i=1}^{d_j} (1,x_{ji}) \right),
\end{align}
$c \in \mathbb{R}\backslash\{0\}$. Additionally, the hyperplane does not contain the intersection point.
\item There exists $i \in \{1,\ldots,k\}$ such that $|S \cap \im \psi_i| = d_i+1$.
Then, there is $r \in \{1,\ldots,k\} \backslash \{i\}$ such that, when the intersection point is excluded, $\im \psi_r$ contains exactly $d_r-1$ of the points, say labelled as $\psi_r([1:x_{rq}])$, $q=1,\ldots,d_r-1$.
The associated hyperplane contains curves $\psi_j$, $j\neq r$, and hence their span \eqref{chunk}. Its normal vectors can be written as
\begin{align}\label{normal big}
c \cdot \varphi^t
\left( \bigotimes_{j=1}^{r-1} \bigotimes_{i=1}^{d_j} (1,x_{ji}) \otimes (0,1) \otimes \bigotimes_{q=1}^{d_r-1} (1,x_{rq}) \otimes \bigotimes_{j=r+1}^k \bigotimes_{i=1}^{d_j} (1,x_{ji}) \right)
\end{align}
$c \in \mathbb{R}\backslash\{0\}$, where $x_{ji}$, $i=1,\ldots,d_j$, $j\neq r$, are any such that $\psi_j([1:x_{ji}])$ are distinct.
In particular, any mutually distinct choices of these $\psi_j([1:x_{ji}])$ give the same hyperplane.
\end{enumerate}
\end{proposition}
\begin{proof}
\Cref{fc rnc} shows that there are at most $d_j+1$ linearly independent directions on $\psi_j$, $j=1,\ldots,k$, and note that the shape of the product Segre-Veronese factorization structure \eqref{1} associated to the partition $m=d_1+\cdots+d_k$ is such that its summands mutually intersect at a unique single direction (see also \Cref{SVfs curves}).
Therefore, for a fixed distribution of $m$ independent directions on factorization curves, every $\psi_j$ must contain at least $d_j-1$ of these directions.
Indeed, since factorization curves $\psi_j$, $j\neq i$, span together $(m+1-d_i)$-dimensional space, the curve $\psi_i$ cannot contain strictly less than $d_i-1$ points.
Now we cover all possible cases: there exists a curve $\psi_i$ carrying $d_i+1$ independent directions, each curve $\psi_i$ carries exactly $d_i$ independent directions, and there exists a curve $\psi_i$ carrying $d_i-1$ independent directions. \par
First, we start with $d_i+1$ points on $\psi_i$ for some $i\in\{1,\ldots,k\}$, which leaves us with choosing $m-d_i-1$ independent directions on factorization curves $\psi_j$, $j\neq i$, each now retaining exactly $d_j$ dimensions.
Then, the only way how to ensure $m$ independent directions is to fix an index $r \neq i$ and $d_r-1$ independent directions on $\psi_r$, and $d_j$ independent directions on $\psi_j$ otherwise.
Note that the latter distribution of points defines the hyperplane containing
\begin{align}\label{chunk}
\sum_{\substack{j=1 \\ j\neq r}}^{k}
ins_j
\left(
S^{d_j}W_j^*\otimes
\bigotimes_{\substack{q=1\\r\neq j}}^k (1,0)^{\otimes d_q}
\right)
\end{align}
and $d_r-1$ points on $\psi_r$ as above.
Observe that it is the same hyperplane as the one given in \eqref{normal big}. \par
Secondly, we consider $d_j$ points on the curve $\psi_j$ for each $j=1,\ldots,k$, say $\psi_j([1:x_{ji}])$, $i=1,\ldots,d_j$, $j=1,\ldots,k$, which provides $m$ independent directions, and \Cref{faces} and \Cref{face structure} show that its normal is of the form \eqref{normals}. Note that this case excludes the intersection point as one $m$ independent directions. \par
Finally, it is easy to observe that having a curve $\psi_j$, with $d_j-1$ independent directions forces existence of a curve $\psi_i$, $i\neq j$ with $d_i+1$ independent directions, which was solved in the first case.
\end{proof}

To proceed further, note that since a cone consists of positive combinations of its generators, a hyperplane is a (facet-)supporting hyperplane if and only if it has all the generators on its positive side.
Using this we find facet-supporting hyperplanes of the cone $\sigma^\vee$ from \eqref{cone} by separately considering the two types of hyperplanes from \Cref{hyperplanes}.
Let $\{x_{ji}\}_{i=1}^{d_j} \subset \{t_{jr}\}_{r=1}^{c_j}$, $j=1,\ldots,k$, be such that the corresponding hyperplane through $\psi_j([1:x_{ji}])$ has the normal \eqref{normals}.
We compute its contraction with a general point on $\psi_j/\langle \psi_j, \epsilon \rangle$,
\begin{gather}
\nonumber
c\left \langle \varphi^t \otimes_{j=1}^k\otimes_{i=1}^{d_j} (1,x_{ji}), \hspace{.1cm}
\frac{\psi_j([1:x])}{\langle \psi_j([1:x]), \epsilon \rangle} \right \rangle =\\\nonumber
c\left\langle \otimes_{j=1}^k\otimes_{i=1}^{d_j} (1,x_{ji}), \hspace{.1cm}
\text{ins}_j \left( (x,-1)^{\otimes d_j} \otimes (1,0)^{\otimes (m-d_j)} \right) \right\rangle =\\
c\prod_{i=1}^{d_j} \left\langle (1,x_{ji}), (x,-1) \right \rangle = 
c\prod_{i=1}^{d_j} (x-x_{ji}) = \nonumber\\
c\sum_{i=0}^{d_j}(-1)^{i} x^{d_j-i} \sigma_{i}(x_{j1},\ldots,x_{jd_j}).\label{the polynomial}
\end{gather}
The expression \eqref{the polynomial} is a polynomial $p_j$ in $x$ which vanishes at $\{x_{ji}\}_{i=1}^{d_j} \subset \{t_{jr}\}_{r=1}^{c_j}$. We conclude
\begin{proposition}\label[proposition]{general Gale}
The value of the polynomial \eqref{the polynomial} on points $t_{jr}$, $r=1, \ldots, c_j$ is zero or has the same sign if and only if any two elements of the set $\{t_{jr}\}_{r=1}^{c_j} \backslash \{x_{ji}\}_{i=1}^{d_j}$ are separated by an even number of elements from $\{x_{ji}\}_{i=1}^{d_j}$ in the sequence $t_{jr}$, $r=1,\ldots,c_i$.
\end{proposition}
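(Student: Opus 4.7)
The proof is a direct sign-count, so the plan is to make all the cases explicit rather than to search for cleverness. First I would fix an ordering $t_{j1}<t_{j2}<\cdots<t_{jc_j}$ on the real line (or assume that this is already the given ordering of the sequence in the statement), and rewrite the polynomial from \eqref{the polynomial} as
\begin{equation*}
p_j(x)=c\prod_{i=1}^{d_j}(x-x_{ji}).
\end{equation*}
The vanishing locus of $p_j$ on $\{t_{jr}\}_{r=1}^{c_j}$ is precisely $\{x_{ji}\}_{i=1}^{d_j}$, so those $t_{jr}$ that equal some $x_{ji}$ contribute the allowed ``zero'' case automatically and can be ignored in what follows.

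Next I would evaluate the sign of $p_j$ at a point $t\in\{t_{jr}\}_{r=1}^{c_j}\setminus\{x_{ji}\}_{i=1}^{d_j}$. Since each factor $(t-x_{ji})$ is positive or negative according as $t>x_{ji}$ or $t<x_{ji}$, one has
\begin{equation*}
\operatorname{sgn} p_j(t)=\operatorname{sgn}(c)\cdot(-1)^{N(t)},\qquad N(t):=\#\{i:x_{ji}>t\}.
\end{equation*}
Now for two non-root points $t<t'$ from $\{t_{jr}\}\setminus\{x_{ji}\}$, the difference $N(t)-N(t')$ equals the number of $x_{ji}$ with $t<x_{ji}\le t'$, and since $t'$ is itself not an $x_{ji}$, this is exactly the number of $x_{ji}$ lying strictly between $t$ and $t'$ in the ordered sequence $t_{j1},\ldots,t_{jc_j}$.

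Putting the pieces together: $p_j(t)$ and $p_j(t')$ have the same sign if and only if $N(t)-N(t')$ is even, i.e.\ if and only if $t$ and $t'$ are separated in the sequence $(t_{jr})$ by an even number of elements of $\{x_{ji}\}$; running this equivalence over all pairs of non-root points yields the proposition. The one point requiring a line of care is the observation that, because $t'\notin\{x_{ji}\}$, the half-open count $t<x_{ji}\le t'$ coincides with the open-interval count $t<x_{ji}<t'$, which is what justifies translating the parity condition on $N(t)-N(t')$ into the geometric ``separation'' formulation of the statement; I do not foresee any genuine obstacle beyond this bookkeeping.
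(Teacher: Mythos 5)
Your proof is correct and is exactly the argument the paper intends: the paper derives the factorized form \eqref{the polynomial} and then states the proposition without further detail, the implicit justification being precisely the sign count $\operatorname{sgn}p_j(t)=\operatorname{sgn}(c)\,(-1)^{\#\{i\,:\,x_{ji}>t\}}$ that you carry out. Your care about the half-open versus open count and the ordering convention on the sequence $t_{jr}$ is appropriate bookkeeping and introduces no gap.
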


In the Veronese factorization structure case, hyperplanes \eqref{normals} are the only class of hyperplanes through $m$ independent points \eqref{edges}, and \Cref{general Gale} recovers the Gale's evenness condition. 
However, to understand if the associated compatible polytopes are cyclic requires further analysis (for details see \cite{brandenburg2024}). \par
Collecting our previous results together yields a condition for facet-supporting hyperplanes of $\sigma^\vee$.

\begin{thm}\label{Gale}
Let $\sigma^\vee$ be the cone compatible with the product Segre-Veronese factorization structure $\varphi$ generated by images of \eqref{edges} in the affine chart $\varepsilon = \varphi^t \epsilon$, $\epsilon$ as in \eqref{epsilon}, i.e., $\sigma^\vee$ is \eqref{cone}. Its $m$ linearly independent generators determine a facet-supporting hyperplane if and only if one of the following holds:
\begin{enumerate}
\item The hyperplane does not contain the intersection point, and, when the $m$ independent generators are labelled as in \Cref{hyperplanes} (1), for each $j=1,\ldots,k$ the value of polynomials $p_j$ from \eqref{the polynomial} is zero or has the constant sign (constant also with respect to $j$) on $t_{ji}$, $i=1,\ldots,d_j$.
\item The hyperplane contains the intersection point, and, in the notation of \Cref{hyperplanes} (2), the value of the polynomial
\begin{align}\label{rest of general Gale}
-\delta_j^r
c
\prod_{q=1}^{d_r-1} \left\langle (1,x_{rq}), (x,-1) \right \rangle =
-\delta_j^r
c
\sum_{i=0}^{d_r-1}(-1)^{i} x^{d_r-1-i} \sigma_{i}(x_{r1},\ldots,x_{rd_{r-1}}),
\end{align}
is zero or has a constant sign on $t_{rq}$, $q=1,\ldots,d_r-1$, where $\delta_j^r$ is the Kronecker symbol.
\end{enumerate}
\end{thm}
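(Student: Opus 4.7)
The plan is to reduce the facet-supporting hyperplane condition to a direct bilinear contraction calculation in each of the two mutually exclusive cases of \Cref{hyperplanes}. Since $\sigma^\vee$ is a pointed polyhedral cone spanned by the generators listed in \eqref{cone}, a hyperplane through $m$ linearly independent generators is facet-supporting precisely when, after fixing the sign of its normal vector, the normal evaluates non-negatively on every generator. The enumeration of these hyperplanes is already performed in \Cref{hyperplanes}, so what remains is to evaluate the two explicit normals \eqref{normals} and \eqref{normal big} against a general generator via \eqref{psi in a chart} and translate the resulting positivity condition into the statements (1) and (2).

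For case (1), I would pair the normal \eqref{normals} with a generator $\psi_j(\tau_{jr})/\langle \psi_j(\tau_{jr}), \epsilon\rangle$ using $\langle \varphi^t T, v\rangle = \langle T, \varphi v\rangle$ together with the identity $\varphi \circ \psi_j([1:x])/\langle \varphi\circ\psi_j([1:x]), \varepsilon\rangle = \mathrm{ins}_j\bigl((x,-1)^{\otimes d_j} \otimes (1,0)^{\otimes(m-d_j)}\bigr)$. The bilinear pairings collapse to $\langle (1, x_{ji}), (x,-1)\rangle = x - x_{ji}$ in the $j$-th group of slots and to $\langle (1, x_{j'i}),(1,0)\rangle = 1$ in every other slot, reproducing exactly the polynomial $p_j$ of \eqref{the polynomial} evaluated at $x = t_{jr}$. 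Facet-supporting then becomes $c\,p_j(t_{jr})\ge 0$ for all $j$ and $r$, and since a single scalar $c$ governs all curves simultaneously, this is equivalent to each $p_j$ having constant sign (possibly zero) on $\{t_{jr}\}_{r=1}^{c_j}$, with signs consistent across $j$. The combinatorial even-separation repackaging of "constant sign" is exactly \Cref{general Gale}, yielding (1).

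For case (2), the same strategy applies, but the normal \eqref{normal big} now carries a $(0,1)$ in the first slot of the $r$-th group. Because this normal annihilates the span \eqref{chunk}, every generator lying on $\psi_j$ with $j\neq r$ contracts to zero; this is exactly what the Kronecker symbol $\delta_j^r$ in \eqref{rest of general Gale} records. The remaining contractions, against generators on $\psi_r$, expand using $\langle (0,1),(x,-1)\rangle = -1$ and $\langle (1,x_{rq}),(x,-1)\rangle = x - x_{rq}$ to reproduce precisely the polynomial in \eqref{rest of general Gale}; requiring its values at the parameters of generators on $\psi_r$ to be of constant sign (with the zeros at the $x_{rq}$ absorbed) is the content of (2).

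The main obstacle, in my view, is not any single computation but the book-keeping around the case division. One must be confident that \Cref{hyperplanes} genuinely exhausts all hyperplanes through $m$ linearly independent generators, which is handled in its proof by the intersection pattern of the $\mathrm{ins}_j$-summands of $\mathfrak{h}$ together with \Cref{fc rnc}; and one must carefully track the single scalar $c$ so that the sign condition is simultaneously realisable across different factorization curves in case (1). Both issues reduce to elementary observations once the two normals have been written down as in \Cref{hyperplanes}, so the substance of the proof is really the two explicit contractions above.
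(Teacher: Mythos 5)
Your proposal is correct and follows essentially the same route as the paper: invoke the classification of \Cref{hyperplanes}, characterise facet-supporting hyperplanes by the cone lying on one side, and reduce to the two explicit contractions \eqref{the polynomial} and \eqref{rest of general Gale}, with the Kronecker symbol recording that the type-(2) normal annihilates the curves $\psi_j$, $j\neq r$. The paper's proof is just a terser version of exactly this argument.
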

\begin{proof}
\Cref{hyperplanes} establishes that there are only two types of hyperplanes, those containing the intersection point and those which do not contain it.
The contraction for hyperplanes of type (1) is computed in \eqref{the polynomial}.
Since a hyperplane is facet-supporting if and only if the entire cone lies on its one side, the contraction must have constant sign, or be zero on points inside the hyperplane.
We are left to compute contractions with hyperplanes of type (2).
To this end, simply observe that because any hyperplane from \Cref{hyperplanes} (2) annihilates curves $\psi_j$, $j\neq r$, the only non-trivial contraction is against points on $\psi_r$ which reads \eqref{rest of general Gale}.
\end{proof}

The above theorem can be formulated in the spirit of \Cref{general Gale}, and further worded in combinatorics, but this is beyond the scope of this article. For more details in the case of the Veronese factorization structure, see \cite{brandenburg2024}.

\begin{proposition}
In the case of the Veronese factorization structure, rays generated by images of 1-dimensional spaces \eqref{edges} in the chart $\epsilon$ \eqref{epsilon} are extremal rays of the corresponding cone.
\end{proposition}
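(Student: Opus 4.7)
The plan is to exhibit, for each index $i_0$, a single supporting functional of $\sigma^\vee$ that exposes exactly the ray through the generator $v_{i_0}:=\psi(\tau_{i_0})/\langle\psi(\tau_{i_0}),\epsilon\rangle$, which forces that ray to be extremal. I would exploit the fact that the Veronese case is the $k=1$, $d_1=m$ specialisation of the product Segre--Veronese setup from \eqref{1}, so the contraction computation \eqref{the polynomial} reduces to the following: for any $x_1,\ldots,x_m\in\mathbb{R}$, the element $n:=\varphi^t\bigl((1,x_1)\otimes\cdots\otimes(1,x_m)\bigr)\in\mathfrak{h}^*$ pairs with the normalised point $\psi([1:t])/\langle\psi([1:t]),\epsilon\rangle$ to give $\prod_{i=1}^m(t-x_i)$.

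Writing $t_j$ for the coordinate of $\tau_j$ in the chart, the task becomes the following purely univariate problem: find a real polynomial $p(x)$ of degree $m$, factorable over $\mathbb{R}$ as $\prod_{i=1}^{m}(x-x_i)$, with $p(t_{i_0})=0$ and $p(t_j)>0$ for all $j\neq i_0$. I would split into cases by parity of $m$: if $m$ is even take $p(x)=(x-t_{i_0})^m$; if $m$ is odd take $p(x)=(x-t_{i_0})^{m-1}(x-s)$ for any $s<\min_j t_j$. In either case $(t_j-t_{i_0})^{m-1}$ appears with an even exponent and is strictly positive, and the linear correction $(t_j-s)>0$ when $m$ is odd. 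The resulting $p$ satisfies the required sign conditions and, being a polynomial of degree exactly $m$, admits the desired factorisation (possibly with repeated roots) required by \eqref{normals}.

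Setting $n_{i_0}:=\varphi^t\bigl((1,x_1)\otimes\cdots\otimes(1,x_m)\bigr)$ for the roots $x_1,\ldots,x_m$ of this $p$, the contraction formula yields $\langle n_{i_0},v_{i_0}\rangle=0$ and $\langle n_{i_0},v_j\rangle=p(t_j)>0$ for every $j\neq i_0$; in particular $n_{i_0}\neq 0$ because its pairing with $v_{j}$ is non-zero for some $j$. Thus $n_{i_0}$ lies in $(\sigma^\vee)^\vee=\sigma$, the hyperplane $\{v\in\mathfrak{h}:\langle n_{i_0},v\rangle=0\}$ supports $\sigma^\vee$, and its intersection with $\sigma^\vee$ consists of those $\sum_j\alpha_j v_j$ with $\alpha_j\geq 0$ for which $\sum_{j\neq i_0}\alpha_j p(t_j)=0$, forcing $\alpha_j=0$ for $j\neq i_0$. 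The face so exposed is exactly $\mathbb{R}_{\geq 0}v_{i_0}$, which is therefore an extremal ray. The only delicate step is the odd $m$ case, where one must verify that the extra linear factor can be chosen without spoiling positivity; the choice $s<\min_j t_j$ handles this uniformly for all $i_0$.
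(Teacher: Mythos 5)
Your proof is correct, but it takes a genuinely different route from the paper's. The paper fixes a generator ray $\rho$, invokes \Cref{general Gale} to produce a facet-supporting hyperplane through $m$ linearly independent generators with $\rho$ among them, and then uses \Cref{Ver cone is simplicial} (the cone is over a simplicial polytope, so no supporting hyperplane contains more than $m$ of the generators) to conclude that $\rho$ cannot be a non-negative combination of the remaining generators. You instead construct, for each $i_0$, an explicit element of $\sigma=(\sigma^\vee)^\vee$ that exposes exactly the ray $\mathbb{R}_{\geq 0}v_{i_0}$, by choosing the polynomial $\prod_i(x-x_i)$ of \eqref{the polynomial} to have a root of multiplicity $m$ (or $m-1$ plus one root $s<\min_j t_j$) at $t_{i_0}$; the parity bookkeeping is right, the algebraic identity \eqref{the polynomial} does not require the $x_i$ to be distinct, and a $1$-dimensional exposed face is an extremal ray by the paper's definitions, so the argument closes. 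Note that your supporting hyperplane is tangent to the factorization curve and is not among the facet-supporting hyperplanes classified in \Cref{hyperplanes}, but that is harmless since you only need a supporting hyperplane. What each approach buys: yours is self-contained and elementary (it needs neither the generalised Gale condition nor simpliciality, and it proves the slightly stronger fact that each generator ray is an \emph{exposed} face), whereas the paper's is shorter given the machinery already in place and stays within the catalogue of facet-supporting hyperplanes that the section is really about.
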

\begin{proof}
Fix such a ray $\rho$.
\Cref{general Gale} implies the existence of a facet-supporting hyperplane given by $m$ independent direction, with $\rho$ lying on one of them.
Recall from \Cref{Ver cone is simplicial} that the cone is a cone over a simplicial polytope, and thus no more than $m$ directions can lie on the hyperplane.
Therefore, $\rho$ cannot be written as a non-negative combination of other rays, and hence is extremal.
\end{proof}

\begin{rem}
We wish to remark that computations \eqref{the polynomial} and \eqref{rest of general Gale} apply in finding facet-supporting hyperplanes for cones/polytopes compatible with a general factorization structure as well. For a general affine chart, a similar computation works, however, the contraction \eqref{the polynomial} is a genuine rational function in this case.
\end{rem}

Finally, as the discussion below \Cref{cor_examples} and the corollary itself explain, if we find an affine chart in which \eqref{edges} generate a full-dimensional cone (over a simplicial polytope), then compatible (simple) polytopes are parametrised by the interior of this cone and realised as sections of its dual. Additionally, if images of \eqref{edges} in the affine chart generate extremal rays of this cone, then compatible polytopes have $c_1+\cdots+c_k$ facets.

\begin{rem}
Observe that choosing $\epsilon = (0,1)^{\otimes m}$ in case of the Veronese factorization structure, whose factorization curve is denoted here by $\psi$, results in vectors
\begin{gather}
\frac{\psi([1:x])}{\langle \psi([1:x]), \epsilon \rangle}
\end{gather}
with the coordinate expression $((-x)^m, (-x)^{m-1}, \ldots, -x, 1)$. Thus, cone generators lie on \textit{the momentum curve}, and their convex hull is by definition a \textit{cyclic polytope} \cite{gale1963neighborly}.
\end{rem}

\subsection{Compatible rational Delzant polytopes}\label{s32}

In this subsection, we find compatible polytopes which are rational Delzant.

\begin{defn}\label[defn]{Delzant}
Let $\mathfrak{t}$ be an $m$-dimensional real vector space. A \textit{rational Delzant polytope} in $\mathfrak{t}^*$ is a simple compact convex polytope
\begin{align}
\Delta=
\left\{
x\in\mathfrak{t}^*
\hspace{.2cm}|\hspace{.2cm}
L_j(x) \geq 0, j=1,\ldots,n
\right\}
\end{align}
with
\begin{align}
L_j(x) = \langle u_j, x \rangle + \lambda_j
\end{align}
for some $\lambda_1,\ldots,\lambda_n\in\mathbb{R}$ such that $u_1,\ldots,u_n$ belong to a lattice $\Lambda\subset\mathfrak{t}$. It is called \textit{integral} or simply \textit{Delzant polytope} if for each vertex $v$ of $\Delta$, the set
$\{u_j \hspace{.1cm}|\hspace{.1cm} L_j(v) = 0\}$
is a basis of $\Lambda$. The set $\{L_j\}_{j=1}^n$ is understood as the minimal set of affine functionals defining $\Delta$.
\end{defn}
\begin{rem}
(Rational) Delzant polytopes occur in the context of toric geometry (see \cite{guillemin1982convexity, atiyah1982convexity, delzant1988hamiltoniens, audin2012topology,lerman1997hamiltonian}). On one hand, the image of the momentum map of a toric symplectic (orbifold) manifold is a (rational) Delzant polytope. On the other hand, (generalised) Delzant construction produces such a toric geometry out of any (rational) Delzant polytope. The condition on normals to form a lattice-basis ensures smoothness of the resulting toric space.
\end{rem}

The (inward-pointing) normals of a compatible polytope given as a section of a cone $\sigma$ by $\beta\in\sigma^\vee$, $\sigma^\vee$ having extremal rays generated by \eqref{edges} in the chart $\epsilon$ (see \eqref{epsilon}), are
\begin{align}
C_{ji}
\frac{\psi_j([1:t_{ji}])}{\langle \psi_j([1:t_{ji}]), \epsilon \rangle}\text{ mod }\beta,
\end{align}
where $C_{ji}$ are any positive constants. We wish to find such scales $C_{ji}$ or a chart $\beta$ for which the polytope is rational Delzant. To do so, we use Vandermonde identities which follow from properties of factorization structures. In general we have

\begin{rem}\label[rem]{gen VI}
For a general factorization structure of dimension $m+1$ and pair-wise distinct $x_1,\ldots,x_{m+1}\in\mathbb{R}$ we denote
$x = \text{span}\left\{ \otimes_{r=1}^{m+1} (1,x_r) \right\} \in \mathbb{P}(V)$
and find
\begin{gather}
\partial_{x_i}
\frac{\varphi^t x}{\langle \varphi^tx, \beta \rangle}
\in
\beta^0 \subset \mathfrak{h}^*.
\end{gather}
Differentiating the identity
\begin{align}
\left \langle \frac{\varphi^t x}{\langle \varphi^tx, \beta \rangle}, \hspace{.1cm}
\frac{\psi_j([1:x_j])}{\langle \psi_j([1:x_j]), \epsilon \rangle} \right \rangle
=0
\end{align}
shows
\begin{gather}\label{V-id mod beta}
\left \langle \partial_{x_i} \frac{\varphi^t x}{\langle \varphi^tx, \beta \rangle}, \hspace{.1cm}
\frac{\psi_j([1:x_j])}{\langle \psi_j([1:x_j]), \epsilon \rangle} \text{ mod }\beta \right \rangle
=0
\end{gather}
for $i\neq j$.
\end{rem}

In particular, for Veronese factorization structure and $\beta=(1,0,\ldots,0)$ the expression \eqref{V-id mod beta} yields
\begin{gather}\label{pre-Vandermonde}
\begin{bmatrix}
\partial_{x_1} \sigma_1 & \cdots & \partial_{x_1} \sigma_{m+1}\\
\vdots & \ddots & \vdots \\
\partial_{x_{m+1}} \sigma_1 & \cdots & \partial_{x_{m+1}} \sigma_{m+1}
\end{bmatrix}
\begin{bmatrix}
- x_1^{m} & \cdots & - x_m^{m}\\
\vdots & \ddots & \vdots\\
(-1)^{m+1} & \cdots & (-1)^{m+1}
\end{bmatrix}
=
- \text{diag}\{ \Delta_1,\ldots, \Delta_{m+1} \},
\end{gather}
where $\sigma_j=\sigma_j(x_1,\ldots,x_{m+1})$ is the $j$th elementary symmetric polynomial, $\sigma_0=1$, and $\Delta_j = \prod_{\substack{r=1\\r\neq j}}^{m+1} \left( x_j - x_r \right)$. Since in the ring of $m\times m$ matrices, a left inverse is also a right inverse, \eqref{pre-Vandermonde} gives Vandermonde identities
\begin{gather}
\sum_{r=1}^{m+1}
\frac{(-1)^{j-1}(x_r)^{{m+1}-j} \partial_{x_r}\sigma_i}{\Delta_r}
=
\delta_{ij},
\hspace{1cm}
i,j=1,\ldots,{m+1},
\end{gather}
which for $i=1$ read
\begin{gather}\label{1 identity}
\sum_{r=1}^{m+1}
\frac{1}{\Delta_r}
\begin{bmatrix}
(x_r)^{m}\\
-(x_r)^{m-1}\\
\vdots\\
(-1)^{m}
\end{bmatrix}
=
\begin{bmatrix}
1\\
0\\
\vdots\\
0
\end{bmatrix}.
\end{gather}
Note that for a general $\beta$ and a general factorization structure we obtain generalised Vandermonde identities.

\begin{lemma}\label[lemma]{common lattice}
Let $v_1,\ldots,v_m$ be a basis of $\mathfrak{t}$ and $v_{m+1},\ldots,v_{m+\ell}\in\mathfrak{t}$. Then, each $v_j$, $j=m+1,\ldots,m+\ell$, can be expressed as a rational linear combination of the basis if and only if $\{v_1,\ldots,v_{m+\ell}\}$ belong to a common full-rank lattice.
\end{lemma}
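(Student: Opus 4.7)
My plan is to prove both implications directly using elementary linear algebra, the key tool being a common denominator in one direction and Cramer's rule in the other.

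For the forward direction, assume each $v_j$, $j=m+1,\ldots,m+\ell$, admits an expansion $v_j = \sum_{i=1}^m q_{ji} v_i$ with $q_{ji}\in\mathbb{Q}$. Let $N$ be a common denominator of all $q_{ji}$, so that $Nq_{ji}\in\mathbb{Z}$. Consider the lattice $\Lambda := \mathbb{Z}\cdot \tfrac{1}{N}v_1 + \cdots + \mathbb{Z}\cdot \tfrac{1}{N}v_m$. Since $v_1,\ldots,v_m$ form a basis of $\mathfrak{t}$, so do $\tfrac{1}{N}v_1,\ldots,\tfrac{1}{N}v_m$, hence $\Lambda$ is a full-rank lattice in $\mathfrak{t}$. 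Clearly $v_i \in \Lambda$ for $i \leq m$, and for $j > m$ we have $v_j = \sum_i (Nq_{ji})\cdot \tfrac{1}{N}v_i \in \Lambda$, concluding this direction.

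For the reverse direction, suppose all $v_1,\ldots,v_{m+\ell}$ lie in a common full-rank lattice $\Lambda\subset\mathfrak{t}$. Choose a $\mathbb{Z}$-basis $e_1,\ldots,e_m$ of $\Lambda$, which is simultaneously an $\mathbb{R}$-basis of $\mathfrak{t}$ since $\Lambda$ has full rank. Write each $v_i = \sum_{k=1}^m a_{ik} e_k$ with $a_{ik}\in\mathbb{Z}$, and denote by $M = (a_{ik})_{i,k=1}^m$ the integer matrix expressing $v_1,\ldots,v_m$ in terms of $e_1,\ldots,e_m$. Because $v_1,\ldots,v_m$ is a basis of $\mathfrak{t}$, the matrix $M$ is invertible over $\mathbb{R}$, and since its entries are integers, $M^{-1}$ has rational entries by Cramer's rule. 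Therefore, each $e_k$ is a $\mathbb{Q}$-linear combination of $v_1,\ldots,v_m$. For $j > m$, expanding $v_j = \sum_k a_{jk} e_k$ and substituting these rational expansions shows that $v_j$ is a $\mathbb{Q}$-linear combination of $v_1,\ldots,v_m$, as required.

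Neither direction poses a genuine obstacle; the only subtle point is invoking that a full-rank lattice in $\mathfrak{t}$ admits a $\mathbb{Z}$-basis of cardinality $m$ which is simultaneously an $\mathbb{R}$-basis of $\mathfrak{t}$, which is standard.
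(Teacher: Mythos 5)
Your proof is correct and follows essentially the same route as the paper's: a common denominator (the paper uses per-coordinate lcm's, you use a single global $N$) to build the lattice in the forward direction, and Cramer's rule applied to the integer change-of-basis matrix relative to a lattice basis in the reverse direction. No gaps.
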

\begin{proof}
By assumptions, for any $j=1,\ldots,\ell$ we have
$$v_{m+j} = \sum_{r=1}^m \frac{\alpha_j^r}{\beta_j^r} v_r,$$
where $\alpha_j^r,\beta_j^r\in\mathbb{Z}$ and $\beta_j^r \neq 0$. The lattice generated by
$$\frac{v_r}{\text{lcm}\{\beta_1^r,\ldots,\beta_\ell^r\}}, \hspace{.2cm} r=1,\ldots, m,$$
contains all $v_1,\ldots,v_{m+\ell}$ as claimed.\par
For the other part, we show that if $\beta$ is an element of the common full-rank lattice, e.g., $\beta=v_j$ for some $j\in\{1,\ldots,m+\ell\}$, then it is a rational linear combination of the basis $v_1,\ldots,v_m$. To this end, we choose a lattice basis $e_1,\ldots,e_m$, and observe that expansions
\begin{align}\
\beta &= \sum_{i=1}^m \beta ^i e_i, \hspace{.2cm} \beta^i \in \mathbb{Z}, \\
v_j &= \sum_{i=1}^m \kappa_j^i e_i, \hspace{.2cm}
\kappa_j^i\in\mathbb{Z}, \hspace{.2cm}
j=1,\ldots,m,
\end{align}
combine with
\begin{align}
\beta = \sum_{j=1}^m b^j v_j
\end{align}
into the system
\begin{align}
\beta^i = \sum_{j=1}^m \kappa_j^i b^j, \hspace{.2cm} i=1,\ldots,m,
\end{align}
which, by Cramer's rule, forces $b^1,\ldots, b^{m+1}\in\mathbb{Q}$, as claimed.
\end{proof}

\begin{example}\label[example]{simplex ex}
We examine simplex compatible with the Veronese factorization structure $S^mW^*$, whose factorization curve is denoted here by $\psi$. For real numbers $x_1<\cdots<x_{m+1}$, let
\begin{align}\label{simplicial}
\frac{\psi([1:x_r])}{\langle \psi([1:x_r]), \epsilon \rangle},
\hspace{.2cm}
r=1,\ldots,m+1,
\end{align}
generate extremal rays of the cone $\sigma^\vee$ over a simplicial polytope (see \Cref{Ver cone is simplicial}). Clearly, any $\beta\in\text{Int}(\sigma^\vee)$ yields a compatible polytope which is a simplex. \Cref{general Gale} shows that $m$ hyperplanes given by any $m$ vectors out of
\begin{align}\label{simplex normals}
C_r
\frac{\psi([1:x_r])}{\langle \psi([1:x_r]), \epsilon \rangle} \text{ mod } \beta,
\hspace{.2cm}
r=1,\ldots,m+1,
\end{align}
intersect in a vertex, and that all vertices arise this way, where $C_r$ are positive constants.\par
To find if this simplex is rational Delzant, or Delzant, means to determine if all normals belong to a common lattice, or if sets of normals corresponding to simplex's vertices span the same lattice, respectively. We start with two such sets of normals \eqref{simplex normals}, say indexed by $\{1,\ldots,m\}$ and $S:=\{1,\ldots,m+1\}\backslash\{m\}$, and find when they belong to a common lattice. \Cref{common lattice} shows that this happens if and only if there exist $\alpha^1,\ldots,\alpha^{m+1}\in\mathbb{Q}$ such that
\begin{align}
\sum_{r=1}^{m+1}
\alpha^r 
C_r
\frac{\psi([1:x_r])}{\langle \psi([1:x_r]), \epsilon \rangle} \text{ mod } \beta
=0,
\end{align}
which is equivalent with
\begin{align}
\sum_{r=1}^{m+1}
\alpha^r 
C_r
\frac{\psi([1:x_r])}{\langle \psi([1:x_r]), \epsilon \rangle}
\in
\langle \beta \rangle,
\end{align}
and thus $\langle \beta \rangle$ must have a rational point with respect ot the full-rank lattice $\Lambda$ generated by
\begin{align}\label{Lambda generators}
C_r
\frac{\psi([1:x_r])}{\langle \psi([1:x_r]), \epsilon \rangle},
\hspace{.2cm}
r=1,\ldots,m+1,
\end{align}
in $S^mW^*$. Additionally, these two sets of normals generate the same lattice in $S^mW^*/\langle \beta \rangle$ if and only if $\alpha^m, \alpha^{m+1} \in \{\pm1 \}$. Since $\beta$ belongs to the cone, it forces $\alpha^m = \alpha^{m+1} =1 $. Replacing the set $S$ with any other set of hyperplanes corresponding to a vertex results in the same condition of $\langle \beta \rangle$ having a $\Lambda$-rational point. Therefore, the compatible simplex corresponding to $\beta$ is rational Delzant if and only if $\beta$ is rational with respect to $\Lambda$, and an example of a common lattice is $\Lambda/\langle \beta \rangle$. Furthermore, the simplex is Delzant if and only if $\beta$ has coordinates $[1,\ldots,1]$ with respect to \eqref{Lambda generators} and, the corresponding lattice being $\Lambda/\langle \beta \rangle$. Since $C_r$ are arbitrary, we can argue that Veronese-compatible simplex is always Delzant with respect to the appropriate choice of the lattice.
\end{example}

We wish to reinterpret the Veronese identity \eqref{1 identity} in terms of a Veronese-compatible simplex. By fixing a new affine chart, the $\varphi^t$-image of
\begin{align}\label{new chart}
\nonumber
(1,x_1) \otimes \cdots \otimes (1,x_m)
+
(1,x_1) \otimes \cdots \otimes (1,x_{m-1}) \otimes (1,x_{m+1})
+\\ \cdots +
(1,x_2) \otimes \cdots \otimes (1,x_{m+1}),
\end{align}
we find that vectors \eqref{simplicial} in this new chart are exactly summands of the identity \eqref{1 identity}, where the sum \eqref{new chart} goes over each linearly ordered $m$-tuple from $x_1<\cdots<x_{m+1}$. In the basis of $S^mW^*$ consisting of these vectors, the right hand side of \eqref{1 identity} has coordinate expression $[1,\ldots,1]$, and thus, using the example above, the corresponding simplex is Delzant.

\begin{example}
We finish this section with general polytopes compatible with the Veronese factorization structure $S^mW^*$ in the chart $\epsilon$. Let
\begin{align}\label{cyclic}
\frac{\psi([1:x_r])}{\langle \psi([1:x_r]), \epsilon \rangle},
\hspace{.2cm}
r=1,\ldots,m+1+\ell,
\end{align}
generate edges of $\sigma^\vee$, and let
\begin{align}
C_r\frac{\psi([1:x_r])}{\langle \psi([1:x_r]), \epsilon \rangle} \text{ mod } \beta,
\hspace{.2cm}
r=1,\ldots,m+1+\ell,
\end{align}
be normals of the compatible polytope corresponding to $\beta\in\text{Int}(\sigma^\vee)$, where $C_r$ are positive constants. By \Cref{common lattice}, these belong to a common lattice if and only if for each $j\in\{m+1,\ldots,m+\ell\}$ there exist $\alpha^1,\ldots,\alpha^m,\alpha^j\in\mathbb{Q}$ such that
\begin{align}
\sum_{r=1}^m
\alpha^r C_r\frac{\psi([1:x_r])}{\langle \psi([1:x_r]), \epsilon \rangle} \text{ mod } \beta +
\alpha^j C_j\frac{\psi([1:x_jr])}{\langle \psi([1:x_j]), \epsilon \rangle} \text{ mod } \beta
= 0.
\end{align}
For each $j$, this means that $\langle \beta \rangle$ has a rational point with respect the lattice in $S^mW^*$ spanned by
\begin{align}\label{cyclic1}
C_r\frac{\psi([1:x_r])}{\langle \psi([1:x_r]), \epsilon \rangle},
\hspace{.2cm}
r=1,\ldots,m
\end{align}
and
\begin{align}\label{cyclic2}
C_j\frac{\psi([1:x_j])}{\langle \psi([1:x_j]), \epsilon \rangle}.
\end{align}
Now, $\langle \beta \rangle$ has a rational point with respect to each of these $\ell$ lattices if and only if it has a rational point with respect to any lattice containing all \eqref{cyclic2} for $j=1,\ldots,m+1+\ell$, as can be seen from \Cref{common lattice} and its proof. To construct such a common lattice $\Lambda$ we verify assumptions of \Cref{common lattice} by observing that for $x_j\in\mathbb{Q}$, $j=1,\ldots,m+1+\ell$, \eqref{1 identity} provides the needed rational dependences. Therefore, $\Lambda$-rational points of $\sigma^\vee$ yield rational Delzant compatible polytopes.
\end{example}

\bibliography{factorizationstructures}
\bibliographystyle{abbrv}

\end{document}